\renewcommand{\geq}{\geqslant}
\renewcommand{\leq}{\leqslant}
\renewcommand{\ge}{\geqslant}
\renewcommand{\le}{\leqslant}
\def\nfrac#1#2{{\textstyle\frac{#1}{#2}}}
\def\dfrac#1#2{\lower0.15ex\hbox{\large$\frac{#1}{#2}$}}
\def\nfrac#1#2{{\textstyle\frac{#1}{#2}}}
\def\dfrac#1#2{\lower0.15ex\hbox{\large$\frac{#1}{#2}$}}
\def\xvec{\boldsymbol{x}}
\def\Ivec{\boldsymbol{I}}
\newtheorem{theorem}{Theorem}[section]
\newtheorem{lemma}[theorem]{Lemma}
\newtheorem{corollary}[theorem]{Corollary}
\DeclareMathOperator{\tr}{tr}
\newcommand{\R}{\mathbb R} 
\newcommand{\Z}{\mathbb Z} 
\newcommand{\N}{\mathbb N} 
\newcommand{\E}{\mathbb E} 
\newcommand{\cP}{\mathcal{P}}
\newcommand{\cG}{\mathcal{G}}
\newcommand{\cS}{\mathcal{S}}
\newcommand{\cD}{\mathcal{D}}
\newcommand{\cT}{\mathcal{T}}
\newcommand{\cI}{\mathcal{I}}
\newcommand{\cN}{\mathcal{N}}
\newcommand{\abs}[1]{\left| #1 \right|} 
\newcommand{\comments}[1]{} 
\newcommand{\qk}[1]{q_k(#1)}
\renewcommand{\Pr}{\mathbb{P}}  
\title{Spanning trees in random regular uniform hypergraphs\thanks{Supported by the Australian Research Council Discovery Project DP190100977. This version includes a technical appendix (Appendix A) which is omitted in the journal version.} }
\author{
	Catherine Greenhill\\
	\small School of Mathematics and Statistics\\[-0.8ex]
	\small UNSW Sydney\\[-0.8ex]
	\small NSW 2052, Australia\\[-0.3ex]
	\small \texttt{c.greenhill@unsw.edu.au}\\
	\and
	Mikhail Isaev\thanks{Supported by the Australian Research Council Discovery Early Career Researcher Award DE200101045.}\\
	\small School of Mathematical Sciences \\[-0.8ex]
	\small Monash University\\[-0.8ex]
	\small VIC 3800, Australia\\
	\small\texttt{mikhail.isaev@monash.edu}
	\and
	Gary Liang\\
	\small School of Mathematics and Statistics\\[-0.8ex]
	\small UNSW Sydney\\[-0.8ex]
	\small Sydney NSW 2052, Australia\\
	\small \texttt{me@garyliang.net}
}
\date{26 March 2021}  
\begin{document}
	
	\maketitle
	
	\begin{abstract} 
Let $\mathcal{G}_{n,r,s}$ denote a uniformly random $r$-regular $s$-uniform
hypergraph on the vertex set $\{1,2,\ldots, n\}$. 
We establish a threshold result for the existence of a spanning tree
in $\mathcal{G}_{n,r,s}$, restricting to $n$ satisfying the necessary divisibility
conditions.  Specifically, we show that when $s\geq 5$, there is a positive
constant $\rho(s)$ such that for any $r\geq 2$, the probability that
$\mathcal{G}_{n,r,s}$ contains a spanning tree tends to~1 if $r > \rho(s)$, 
and otherwise this probability tends to zero.  
The threshold value $\rho(s)$ grows exponentially with $s$.
As $\mathcal{G}_{n,r,s}$ is connected with probability
which tends to~1, this implies that when $r \leq \rho(s)$, most $r$-regular $s$-uniform
hypergraphs are connected but have no spanning tree.
When $s=3,4$ we prove
that $\mathcal{G}_{n,r,s}$ contains a spanning tree with probability which
tends to~1, for any $r\geq 2$. Our proof also provides the 
asymptotic distribution of the number of spanning trees 
in $\mathcal{G}_{n,r,s}$ for all fixed integers $r,s\geq 2$.
	Previously, this asymptotic distribution was only
	known in the trivial case of 2-regular graphs,
	or for cubic graphs.
	\end{abstract}
	
	\section{Introduction}\label{s:intro}

A \textit{hypergraph} $H = (V,E)$ consists of a set of vertices $V$ and a multiset $E$ 
of non-empty multisubsets of $V$, which we call \textit{edges}.  A hypergraph is 
\emph{simple} if it has no repeated edges and no edge contains a repeated vertex.   
We focus on \emph{uniform hypergraphs}, where every edge has the same size, and say 
that a hypergraph is $s$-uniform if every edge has size $s$.  A graph is a simple 
2-uniform hypergraph.
	For more background on hypergraphs, see \cite{hypergraphs}.
	

In graph theory, a tree is a simple connected graph with no cycles, or equivalently a 
graph with the smallest number of edges among all connected graphs on a given vertex 
set.  There are several different ways to generalise this notion to hypergraphs, 
involving different definitions of acyclicity in hypergraphs~\cite{Baron}.
We use Berge acyclicity~\cite{Berge} to define trees in hypergraphs (hypertrees), see Section~\ref{s:preliminaries}.  
In particular, the definition implies that any two distinct edges in the tree intersect in at most one vertex, and hence an $s$-uniform tree with $n$ vertices has exactly 
$\frac{n-1}{s-1}$ edges. 
In fact, for $n$ satisfying this divisibility condition, a tree is a connected hypergraph on $n$ vertices with the smallest number of edges, exactly as in the graph case. 
We note also that our definition of trees in hypergraphs matches the definition given by
Boonyasombat in~\cite{Bo84}, while Siu refers to the trees we consider as
``traditional hypertrees''~\cite[Section~1.2.1]{Siu02}.

A spanning tree in a hypergraph $H$ is a spanning subhypergraph of $H$ which is a tree.
Just as trees in graphs are well-studied and extremely useful objects,
trees in hypergraphs have various applications in a wide variety of areas, including 
game theory~\cite{games}, relational databases~\cite{database},  
molecular optimisation~\cite{molecule} and network reliability~\cite{reach}. 
For example, Warme~\cite{warme} showed that the Steiner tree problem reduces to finding the minimum spanning tree in a hypergraph. 

It is well known that a graph contains a spanning tree if and only if it is connected. 
However, this relation does not extend to hypergraphs: that is, there exist connected 
hypergraphs without spanning trees. In fact, our results imply that asymptotically 
almost all $s$-uniform regular hypergraphs are like this, provided the degree is not 
too large (depending on $s$); see Theorem 1.1 and Lemma 1.4. 
The property of containing a spanning tree can be thought of as a kind of ``optimal 
connectedness'' of the hypergraph. This property is stronger than the usual notion
of connectedness,  which can be achieved using substantially overlapping edges and 
thus causing undesirable redundancy in various applications. 


	Given $r,s\geq 2$, let $n$ be a positive integer such that $s \mid rn$, 
 and let $\Gamma_{n,r,s}$ be the set of $r$-regular $s$-uniform simple hypergraphs on $[n]$. 
	Denote by $\mathcal{G}_{n,r,s}$ a hypergraph chosen uniformly at random from 
    $\Gamma_{n,r,s}$.
	Unless otherwise specified, all asymptotics in this paper are as $n\to\infty$, restricted to values of $n$ which
satisfy the necessary divisibility conditions: that is, restricted to the set
\[ 
\cN_{(r,s)} = \{ n\in\mathbb{Z}^+\, : \, s\mid rn \, \text{ and  }\, s-1\mid n-1\}.
\]
	Our main result establishes a degree threshold for the existence of a spanning tree in 
$\cG_{n,r,s}$, when $s\geq 5$,  and proves that $\mathcal{G}_{n,r,s}$ contains a spanning tree with 
probability which tends to 1 when $s\in \{ 2,3,4\}$, except for the case $(r,s)=(2,2)$.
A $2$-regular graph has a spanning tree if and only if it is connected (that is, forms a Hamilton cycle). Thus it follows from~\cite[Equation (11)]{wormald1999models} that as $n\to\infty$,
		\begin{equation} \label{eq:ham}
		\Pr(\cG_{n,2,2} \text{ contains a spanning tree}) \sim \frac{1}{2} e^{3/4}\, 
   \sqrt{\frac{\pi}{n}} \rightarrow 0.
		\end{equation}

\bigskip

	\begin{theorem} \label{thm:threshold}
	Let $s \ge 2$ be a fixed integer. If $s\geq 5$ then there exists a positive constant $\rho(s)$ such that for any fixed integer $r \ge 2$, as $n \to \infty$ along $\cN_{(r,s)}$,
		\[
		\Pr\left( \cG_{n,r,s} \text{ contains a spanning tree} \right) \longrightarrow 
 \begin{cases}
		1 & \text{if } r > \rho(s), \\
		0 & \text{if } r \le \rho(s).
		\end{cases} 
		\]
Specifically, $\rho = \rho(s)$ is the unique real number in $(2,\infty)$ such that 
		\begin{equation}
\label{rho-def}
 (s-1)^{\rho} (\rho - 1)^{s(\rho-1)} = \rho^{\rho s - \rho - s}(\rho s -\rho - s)^{(\rho s -\rho - s)/(s-1)}.
		\end{equation}
For $s\geq 5$ we have $\rho^{-}(s) < \rho(s) < \rho^+(s)$ where
		\[
	\rho^{-}(s) = \frac{e^{s-2}}{s-1} - \frac{s-1}{2},\qquad
    \rho^{+}(s) = \frac{e^{s-2}}{s-1} - \frac{s-3}{2}.
		\]
Furthermore, as $s\to\infty$,
\[ \rho(s) = \frac{e^{s-2}}{s-1}- \frac{s^2-3s+1}{2(s-1)} + O_s(s^4\, e^{-s}).\]
Finally, if $s\in \{2,3,4\}$ then for any fixed integer $r\geq 2$,
\[ \Pr\left( \cG_{n,r,s} \text{ contains a spanning tree} \right) \longrightarrow 
 \begin{cases}
    1 & \text{if $(r,s)\neq (2,2)$,}\\
    0 & \text{if $(r,s)=(2,2)$.}\end{cases}
\] 
	\end{theorem}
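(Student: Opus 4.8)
The plan is to compute the first and second moments of the number $X$ of spanning trees in $\cG_{n,r,s}$ and apply the small-subgraph conditioning method. First I would set up the enumeration in the configuration (pairing) model for $r$-regular $s$-uniform hypergraphs: a spanning tree on $[n]$ uses exactly $m=(n-1)/(s-1)$ edges, so $\E X$ equals the number of labelled $s$-uniform hypertrees on $[n]$, times the probability that a fixed such tree appears as a subhypergraph. The count of labelled hypertrees is a classical generalised Cayley formula (this is where Boonyasombat's or the ``traditional hypertree'' enumeration enters), and the appearance probability is a ratio of falling factorials in the pairing model; a Stirling-formula estimate then gives $\E X \sim C\, \lambda^n$ for an explicit base $\lambda = \lambda(r,s)$. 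The key analytic point is that $\lambda > 1$ exactly when $r > \rho(s)$, where $\rho(s)$ is the root of the transcendental equation~\eqref{rho-def}; this is obtained by optimising/simplifying the exponential rate, and the bounds $\rho^-(s) < \rho(s) < \rho^+(s)$ and the asymptotic expansion follow by analysing that equation for large $s$. When $r \le \rho(s)$ we have $\lambda \le 1$; the case $\lambda < 1$ kills the probability by Markov's inequality, and the boundary case $\lambda = 1$ (where $\E X$ is polynomially small or bounded) needs a separate argument — most cleanly, one shows $\rho(s)$ is irrational or at least that the defining equation has no solution with $r$ an integer, so the boundary is never actually attained for integer $r$ when $s \ge 5$; I would verify this from~\eqref{rho-def} directly.

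For the $1$-statement ($r > \rho(s)$, and all $r \ge 2$ when $s \in \{3,4\}$, and $r\ge 3$ when $s=2$) the plan is the standard second-moment-plus-conditioning route. I would compute $\E[X(X-1)]$ by summing over ordered pairs of hypertrees $T_1, T_2$ according to the structure of their overlap (the sub-hypergraph $T_1 \cap T_2$, organised by its edge set and the resulting forest structure), obtaining $\E[X(X-1)]/(\E X)^2 \to \prod_{k}$ (something); a crude second moment will generally NOT give the constant, so I expect $\E[X^2]/(\E X)^2$ to converge to a value strictly larger than $1$. This forces the use of small-subgraph conditioning (Robinson–Wormald / Janson): one identifies the relevant short ``cycle'' statistics of $\cG_{n,r,s}$ — in the hypergraph setting these are the Berge cycles of each fixed length $k$, whose joint distribution is asymptotically independent Poisson with means $\lambda_k$ — computes the factors $\delta_k = \E[X \mid \text{$k$-cycle count}]/\E X$ governing how spanning-tree-count correlates with each cycle statistic, and checks the Janson criterion $\sum_k \lambda_k(\delta_k-1)^2 < \infty$ together with the matching of $\E[X^2]/(\E X)^2$ to $\exp\!\big(\sum_k \lambda_k(\delta_k-1)^2\big)$. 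Verifying that these two quantities agree is the crux: it simultaneously proves $X > 0$ a.a.s.\ (hence a spanning tree exists) and yields the claimed asymptotic distribution of $X$.

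The small cases $s \in \{2,3,4\}$ and small $r$ require care because there $\rho(s)$ would be below $2$ (so $\lambda>1$ automatically for all $r\ge 2$), but the second-moment ratios and the $\delta_k$'s must still be shown finite and consistent; for $s=2$ this recovers known spanning-tree-count results for regular graphs (and the genuine exception $(r,s)=(2,2)$, where a $2$-regular graph has a spanning tree iff it is a single Hamilton cycle, handled by~\eqref{eq:ham}). Throughout, I would rely on the known a.a.s.\ connectivity of $\cG_{n,r,s}$ only as context; the existence of a spanning tree is strictly stronger and comes entirely from $X>0$ a.a.s.

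The hard part will be the variance computation and the verification of the Janson identity. Organising the overlap of two random hypertrees is combinatorially heavier than in the graph case: two hypertree edges may share one vertex, the union $T_1\cup T_2$ can contain Berge cycles of several lengths, and one must control the exponential contributions of all overlap patterns and resum them to exactly $\exp\!\big(\sum_{k\ge \text{(min)}} \lambda_k(\delta_k-1)^2\big)$. Getting the $\lambda_k$ (Poisson means of Berge $k$-cycles in the pairing model) and the $\delta_k$ (the ``bias'' that spanning trees place on short cycles) in closed form, and showing $\sum \lambda_k(\delta_k-1)^2$ converges — which in turn pins down why $s\ge 5$ versus $s\le 4$ behave differently and why $\rho(s)$ takes its particular form — is where essentially all the work lies; the moment estimates and the analysis of~\eqref{rho-def} are comparatively routine asymptotic calculus.
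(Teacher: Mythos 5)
Your proposal follows essentially the same route as the paper: compute $\E Y$ in the configuration model using the generalised Cayley count of labelled hypertrees, identify $\rho(s)$ as the root of the equation where the exponential growth rate of $\E Y$ equals~$1$, then verify conditions (A1)--(A4) of Janson's small-subgraph conditioning theorem to upgrade $\E Y\to\infty$ to $\Pr(Y>0)\to 1$. Three small corrections to your outline. First, the relevant short-cycle statistics are the counts of \emph{loose} $j$-cycles (Berge cycles in which consecutive edges meet in exactly one vertex and non-consecutive edges are disjoint), not arbitrary Berge $j$-cycles: the latter are more numerous but the non-loose ones are negligible in number, as shown by Cooper--Frieze--Molloy--Reed, which is why conditioning on the loose counts suffices. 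Second, you do not need to prove $\rho(s)$ is irrational to handle the boundary $r=\rho(s)$; the $1/n$ prefactor in $\E Y\sim (C/n)\lambda^n$ already forces $\E Y\to 0$ when $\lambda=1$, and you noted the polynomial decay yourself, so the irrationality detour is superfluous (and would likely be much harder). Third, a hypergraph-specific subtlety you should make explicit: for $s\ge 3$ a simple hypergraph may contain $2$-cycles (edges overlapping in $2,\dots,s-1$ vertices), so unlike the graph case one cannot pass to the simple model by conditioning on $X_1=X_2=0$; the paper instead conditions only on $X_1=0$ and uses a transfer lemma to compare with the simple-hypergraph distribution, which is also why the product in the limiting distribution starts at $j=2$ when $s\ge 3$ rather than at $j=3$ as for graphs.
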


The value of $\rho(s)$ for $s=5,\ldots, 12$ is displayed in Table \ref{tab:threshold}, together with 
the bounds $\rho^{-}(s)$ and $\rho^+(s)$.  All values are rounded to 3 decimal places. 
	
	\renewcommand{\arraystretch}{1.2}%
		\begin{table}[ht!]
			\centering
			\begin{tabular}{|c||c|c|c|c|c|c|c|c|}
				\hline
				$s$ & $5$ & $6$ & $7$ & $8$ & $9$ & $10$ & $11$ & $12$ \\
				\hline 
				$\rho^{-}(s)$ & $3.021$ & $8.420$ & $21.736$ & $54.133$ & $133.079$ & $326.718$ & $805.308$ & $1996.906$  \\ \hline
				$\rho(s)$ & $3.029$ & $8.706$ & $22.142$ & $54.606$ & $133.588$ & $327.245$ & $805.844$ & $1997.444$  \\ \hline
				$\rho^{+}(s)$ & $4.021$ & $9.420$ & $22.736$ & $55.133$ & $134.079$ & $327.718$ & $806.308$ & $1997.906$ \\ \hline
			\end{tabular}
			\caption{Values of $\rho(s)$ for $s = 5, \ldots, 12$, together with our bounds.}
			\label{tab:threshold}
		\end{table}
	
Let $Y_{\cG}$ be the number of spanning trees in $\cG_{n,r,s}$. 
This random variable is our main object of study.
Using asymptotic enumeration methods, Aldosari and Greenhill~\cite[Corollary 1.2.]{AG2} 
established the following asymptotic expression for $\E Y_{\cG}$ when $s\geq 3$: 
	\begin{equation} \label{eq:EY_G}
	\E Y_{\cG} \sim \exp\left( \frac{rs-r-1}{2(r-1)}\right) \frac{(s-1)\,\sqrt{r-1} }{n \, (rs-r-s)^{\frac{s+1}{2(s-1)}}} \left( \frac{(s-1)^{r}\, (r-1)^{(r-1)s}}{r^{rs-r-s}\, (rs-r-s)^{\frac{rs-r-s}{s-1}}} \right)^{n/s}.
	\end{equation}
(In fact a more general result is proved in~\cite{AG2}, which covers irregular degree
sequences and allows $s$ and the maximum degree to grow slowly with $n$.)
In the graph case, the asymptotic formula for $\E Y_{\cG}$ was known up to a constant factor
by the results of McKay~\cite{mckay1981} (who also considered irregular, slowly-growing degrees), 
and then this constant factor was calculated 
precisely in~\cite[Theorem~1.1]{spanning}.

\medskip

The argument used to prove Theorem~\ref{thm:threshold} also provides 
the asymptotic distribution of $Y_{\cG}$, for any parameters $r,s\geq 2$.

	\begin{theorem} \label{thm:distribution}
Let $r,s,\geq 2$ be fixed integers.
For all positive integers $j$, define
		\[
		\lambda_j = \frac{(r-1)^j (s-1)^j}{2j} \quad \text{and} \quad \zeta_j = \frac{\left(\frac{r}{r-1} - s + 1 \right)^j - 2}{(r-1)^j (s-1)^j}.
		\]
Let $J(2) = 3$ and $J(s) =2$ for $s \ge 3$. 
If $\Pr(Y_{\cG}>0) \rightarrow 1$ as $n\to\infty$ along $\cN_{(r,s)}$,
then the asymptotic distribution of $Y_{\cG}$ satisfies
\[
		\frac{Y_{\cG}}{\E Y_{\cG}} \overset{d}{\longrightarrow} \prod_{j=J(s)}^\infty (1+ \zeta_j)^{Z_j} \, e^{-\lambda_j \zeta_j}
\]
		where 
$Z_j = \operatorname{Po}(\lambda_j)$ are independent Poisson random variables.
Otherwise, the asymptotic distribution of $Y_{\cG}$ is a point mass at zero.
	\end{theorem}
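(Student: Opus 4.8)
The plan is to prove Theorem~\ref{thm:distribution} via the method of moments, computing the asymptotics of the factorial moments $\E[(Y_\cG)_k]$ for each fixed $k$ and matching them to the moments of the claimed limiting random variable. The key observation is that $Y_\cG/\E Y_\cG$ should be ``dominated'' by the contribution of short Berge cycles in $\cG_{n,r,s}$, exactly as in analogous results for spanning trees in random regular graphs. Concretely, I would realise the limit $W := \prod_{j\ge J(s)} (1+\zeta_j)^{Z_j} e^{-\lambda_j\zeta_j}$ and verify that its moments are determined by the joint factorial moments of the $Z_j$, which are products of $\lambda_j^{a_j}$; one then checks Carleman's condition (or simply that $W$ is bounded, which it is when all $1+\zeta_j>0$, giving a compactly supported limit) so that convergence of moments implies convergence in distribution.

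The main computation is the ratio $\E[(Y_\cG)_k]/(\E Y_\cG)^k$. I would work in the configuration-model-type setting used in \cite{AG2} to derive \eqref{eq:EY_G}: a spanning tree corresponds to a choice of $\frac{n-1}{s-1}$ edges forming a Berge tree, realised through the pairing/partition model, and $(Y_\cG)_k$ counts ordered $k$-tuples of distinct spanning trees. The dominant term comes from $k$ trees that are ``almost disjoint'' as edge sets, and the correction factors arise from the small overlaps, which organise themselves according to how the symmetric difference of pairs of trees decomposes into short closed walks (Berge cycles of length $j$). After the standard switching/generating-function bookkeeping, the ratio should factor as $\prod_j \E[(1+\zeta_j)^{Z_j}]^{\text{-ish}}$ terms, yielding in the limit $\E[W^k]$. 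Here the parameters $\lambda_j$ (expected number of length-$j$ Berge cycles, up to the combinatorial factor $\tfrac{1}{2j}$ accounting for cyclic rotations and reflection) and $\zeta_j$ (the per-cycle multiplicative effect on the tree count, coming from the ratio of local weights with and without the cycle, cf.\ the exponent structure in \eqref{eq:EY_G}) emerge from exactly the same local calculations that produced the constant $\exp\!\big(\tfrac{rs-r-1}{2(r-1)}\big)$ and the base of the exponential in \eqref{eq:EY_G}. The lower cutoff $J(s)$ reflects that for $s\ge 3$ Berge $2$-cycles (two edges sharing two vertices) are forbidden in simple hypergraphs so the product starts at $j=2$, while for $s=2$ (graphs) one must also excise the $j=2$ term and start at $j=3$, matching $J(2)=3$.

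The hardest part will be controlling the error terms uniformly enough to justify taking $n\to\infty$ before truncating the infinite product: one needs to show that the contribution of tree-tuples whose pairwise symmetric differences contain a long Berge cycle, or a more complicated non-cycle overlap structure, is negligible relative to $(\E Y_\cG)^k$. This requires reasonably sharp upper bounds on $\E Y_\cG$ itself (which \eqref{eq:EY_G} supplies) together with enumeration bounds on the number of ways $k$ trees can overlap in a prescribed ``excess'' pattern — a standard but delicate inclusion-exclusion/switching estimate. A secondary technical point is the hypothesis $\Pr(Y_\cG>0)\to 1$: when this fails, $Y_\cG$ is typically $0$ but $\E Y_\cG$ may be inflated by rare hypergraphs with many spanning trees, so the normalised variable collapses to a point mass at $0$; I would dispose of this case by noting $\Pr(Y_\cG/\E Y_\cG > \epsilon) \le \Pr(Y_\cG>0) \to 0$ whenever $\E Y_\cG \not\to 0$, and handle $\E Y_\cG\to 0$ trivially. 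Finally, I would record that the resulting limit law is consistent with Theorem~\ref{thm:threshold}: the product $\prod_j e^{-\lambda_j\zeta_j}$ diverges to $0$ exactly when $r\le\rho(s)$ (where $\tfrac{r}{r-1}-s+1 \le$ the relevant threshold makes $\sum_j \lambda_j\zeta_j = +\infty$), which is consistent with $\Pr(Y_\cG>0)\to 0$ there, and converges when $r>\rho(s)$, giving a genuine nondegenerate limit.
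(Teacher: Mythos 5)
Your proposal takes a genuinely different route from the paper, and unfortunately that route has a gap that is fatal, not merely technical.

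The paper proves Theorem~\ref{thm:distribution} by the small subgraph conditioning method: it verifies conditions (A1)--(A4) of Theorem~\ref{thm:subgraph} (Janson), which require only the \emph{first} moment (Lemma~\ref{lem:EY-hypergraph}), the \emph{second} moment (Lemma~\ref{lemma:2nd-moment}), and joint moments of $Y$ with the short-cycle counts $X_j$ that are still linear in $Y$ (Lemma~\ref{lemma:joint-moment}, via (A2$'$)). It then transfers from the configuration model to $\cG_{n,r,s}$ via Corollary~\ref{cor:translate-back-to-hypergraphs} and Lemma~\ref{lemma:translate}. The method never touches a $k$-th moment of $Y$ for $k\geq 3$, and this is the whole point of the technique.

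You instead propose the method of moments: compute $\E[(Y_\cG)_k]/(\E Y_\cG)^k$ for all fixed $k$, match to $\E[W^k]$, and invoke moment determinacy. There are two concrete problems. First, your justification for determinacy is wrong: $W=\prod_j (1+\zeta_j)^{Z_j}e^{-\lambda_j\zeta_j}$ is \emph{not} bounded whenever some $\zeta_j>0$, because then $(1+\zeta_j)^{Z_j}$ is unbounded (Poisson has unbounded support). And $\zeta_j>0$ does occur: for example with $s=4$, $r=3$, one has $\tfrac{r}{r-1}-s+1=-\tfrac{3}{2}$, so $\zeta_2=\frac{(3/2)^2-2}{(r-1)^2(s-1)^2}>0$; more generally for $s\geq 4$ and all but finitely many $r$ the even-indexed $\zeta_j$ are eventually positive. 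Second, $\log W$ is a sum of scaled Poissons, so $W$ is of log-compound-Poisson type; when some $\zeta_j>0$ its moments grow like $\E W^k=\prod_j \exp\bigl(\lambda_j((1+\zeta_j)^k-1-k\zeta_j)\bigr)$, which is super-exponential in $k$ and violates Carleman's condition — exactly the regime where moment determinacy fails (compare the log-normal). So even if you computed all $\E[(Y_\cG)_k]/(\E Y_\cG)^k$ correctly and showed they converge to $\E W^k$ (itself a vastly larger calculation than anything in the paper: the paper stops at $k=2$), you could not conclude convergence in distribution. This is not a technicality to be patched: the small subgraph conditioning method exists precisely because the method of moments breaks down for this family of limit laws.

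A minor additional point: the paper's proof of condition (A2) proceeds through the joint factorial moments $\E[Y(X_1)_{x_1}\cdots(X_m)_{x_m}]/\E Y$ via an irregular-configuration-model decomposition (Lemmas~\ref{lem:s1}--\ref{lem:s3}), which is much more tractable than decomposing $k$-tuples of spanning trees by the Berge-cycle structure of their symmetric differences. Your sketch of the $k$-tree overlap bookkeeping would, even if moment determinacy held, be far harder to make rigorous than the route the paper takes. The handling of the degenerate case ($\Pr(Y_\cG>0)\to 0$) in your last paragraph is fine and matches the paper.
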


	
Previously, the result of Theorem~\ref{thm:distribution} was only known
for two values of $(r,s)$:  when $(r,s)=(2,2)$ the result follows trivially
from (\ref{eq:ham}),  while for $(r,s)=(3,2)$ (cubic graphs), the 
asymptotic distribution of $Y_{\cG}$ was obtained by
Greenhill, Kwan and Wind~\cite[Theorem~1.2]{spanning}.
The authors of~\cite{spanning} also conjectured an expression for the asymptotic
distribution of $Y_{\cG}$ when $s=2$ and $r\geq 4$.  Substituting $s=2$ into
Theorem~\ref{thm:distribution}  verifies that their conjecture is true.

\begin{corollary}\label{conjecture-holds}
The conjecture given in~\emph{\cite[Conjecture 1.3]{spanning}} holds.
That is, the number $Y_{\mathcal{G}}$ of spanning trees in a random $r$-regular
graph satisfies 
\[
\frac{Y_{\cG}}{\E Y_{\cG}} \overset{d}{\longrightarrow} 
  \prod_{j=3}^\infty (1+ \zeta_j)^{Z_j} \, e^{-\lambda_j \zeta_j}
\]
with
\[ \lambda_j = \frac{(r-1)^j}{2j},\qquad \zeta_j = -\frac{2(r-1)^j-1}{(r-1)^{2j}}\]
for $j\geq 3$.
\end{corollary}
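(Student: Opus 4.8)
The plan is to simply specialise Theorem~\ref{thm:distribution} to the case $s=2$ and check that the resulting expressions for $\lambda_j$ and $\zeta_j$ match those in \cite[Conjecture 1.3]{spanning}. First I would verify the hypothesis of Theorem~\ref{thm:distribution}: for $s=2$ and any fixed $r\geq 3$, a random $r$-regular graph is connected with probability tending to~$1$ (this is classical, e.g.\ via the configuration model or \cite{wormald1999models}), and a connected graph always contains a spanning tree, so $\Pr(Y_{\cG}>0)\to 1$. Hence the non-degenerate branch of Theorem~\ref{thm:distribution} applies, and with $J(2)=3$ we obtain
\[
\frac{Y_{\cG}}{\E Y_{\cG}} \overset{d}{\longrightarrow} \prod_{j=3}^\infty (1+\zeta_j)^{Z_j}\, e^{-\lambda_j\zeta_j},\qquad Z_j=\operatorname{Po}(\lambda_j)\ \text{independent}.
\]

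Next I would substitute $s=2$ into the formulas for $\lambda_j$ and $\zeta_j$. For $\lambda_j$ this is immediate: $\lambda_j = \frac{(r-1)^j(s-1)^j}{2j} = \frac{(r-1)^j}{2j}$, since $(s-1)^j = 1$. For $\zeta_j$, with $s=2$ the quantity $\frac{r}{r-1}-s+1$ becomes $\frac{r}{r-1}-1 = \frac{1}{r-1}$, so
\[
\zeta_j = \frac{\left(\frac{1}{r-1}\right)^j - 2}{(r-1)^j\cdot 1^j} = \frac{(r-1)^{-j}-2}{(r-1)^j} = \frac{1-2(r-1)^j}{(r-1)^{2j}} = -\frac{2(r-1)^j - 1}{(r-1)^{2j}},
\]
which is exactly the expression claimed in the corollary (and in \cite[Conjecture 1.3]{spanning}). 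This establishes the identity of the two families of constants, and hence the two limiting distributions coincide.

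The only remaining point is a bookkeeping one: the conjecture in \cite{spanning} is stated for $r\geq 4$, whereas Theorem~\ref{thm:distribution} with $s=2$ covers all $r\geq 2$. For $r\geq 4$ the conjecture is therefore verified verbatim; I would note in passing that the cases $r=2,3$ are already known (the case $(r,s)=(2,2)$ from~\eqref{eq:ham}, and $(r,s)=(3,2)$ from \cite[Theorem~1.2]{spanning}), and that Theorem~\ref{thm:distribution} reproduces those as well, so there is no inconsistency. There is essentially no obstacle here — the corollary is a direct algebraic specialisation of the main distributional theorem — so the proof is a few lines of substitution together with the observation that connectivity guarantees $\Pr(Y_{\cG}>0)\to 1$.
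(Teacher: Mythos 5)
Your proposal is correct and follows exactly the route the paper intends: the corollary is obtained by substituting $s=2$ into Theorem~\ref{thm:distribution} (the paper states this in the sentence immediately preceding the corollary and leaves the arithmetic implicit). Your algebra checks out, including $\frac{r}{r-1}-s+1 = \frac{1}{r-1}$ for $s=2$, the simplification of $\zeta_j$, and the use of $J(2)=3$; and your invocation of connectivity of random $r$-regular graphs for $r\geq 3$ (which is the content of Lemma~\ref{lem:connectivity} in the graph case, combined with the equivalence of connectivity and having a spanning tree for graphs) correctly establishes the hypothesis $\Pr(Y_{\cG}>0)\to 1$ needed for the nondegenerate branch of Theorem~\ref{thm:distribution}.
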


\medskip

Theorems~\ref{thm:threshold} and~\ref{thm:distribution} are proved using the small subgraph conditioning method~\cite{robinson1992almost}.
While many structural results about random regular graphs have been proved using 
this method (see~\cite{janson1995random,wormald1999models} for surveys),
there are only two previously-known results for hypergraphs with $s\geq 3$. 
Cooper, Frieze, Molloy and Reed~\cite{cooper1996perfect} gave a threshold result for the existence of a perfect 
matching in $\mathcal{G}_{n,r,s}$, while
	Altman, Greenhill, Isaev and Ramadurai~\cite{loose} proved a threshold result for the existence of
loose Hamilton cycles in $\cG_{n,r,s}$.
Keeping only the most significant term, the threshold values for spanning trees,
loose Hamilton cycles and perfect matchings are approximately
\[ \frac{e^{s-2}}{s-1}\,\,\, \text{(spanning trees)},\qquad \frac{e^{s-1}}{s-1} \,\,\,
  \text{(loose Hamilton cycles)},\qquad e^{s-1} \,\,\,\text{(perfect matchings)}\]
respectively.   Hence (restricting to values of $n$ satisfying the relevant
divisibility conditions in each case), for a fixed $s$, as $r$ increases,
spanning trees appear first, followed by loose Hamilton cycles and then perfect matchings.

We close this section with some comments on connectedness in random
regular uniform hypergraphs. 
It is well known that with probability tending to~1,
random $r$-regular graphs are connected (indeed, $r$-connected)
whenever $r\geq 3$, see~\cite{bollobas-1981,wormald-connectivity}. 
Dumitriu and Zhu~\cite{DZ} recently used spectral methods to investigate
expansion properties of $\mathcal{G}_{n,r,s}$, and hence inferred~\cite[Lemma 6.2]{DZ}
that $\Pr(\mathcal{G}_{n,r,s} \text{ is connected})\to 1$ when $r\geq s\geq 3$.  
For completeness we sketch a more elementary argument which covers
all $(r,s)\neq (2,2)$.

\begin{lemma}
\label{lem:connectivity}
Let $r,s\geq 2$ be fixed integers with $(r,s)\neq (2,2)$.  Then 
\[ \Pr(\cG_{n,r,s} \text{ is connected}) \to 1\]
as $n$ tends to infinity along $\mathcal{N}_{(r,s)}$.
\end{lemma}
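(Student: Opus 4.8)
The plan is to use the standard first-moment (and, where needed, second-moment) argument on the number of connected components, working in the pairing/configuration model for $r$-regular $s$-uniform hypergraphs. Concretely, I would count configurations whose underlying hypergraph is disconnected by conditioning on the vertex set $S$ of a smallest component: if $|S|=k$ then $S$ must support an $r$-regular $s$-uniform sub-configuration (so $s\mid rk$) using only the $rk$ points inside $S$, and the remaining $r(n-k)$ points must pair up among themselves. Writing $M(m)$ for the number of perfect matchings on $m$ points grouped into $s$-sets (so $M(m)=m!/((s!)^{m/s}(m/s)!)$ up to the usual conventions), the probability that $\cG_{n,r,s}$ is disconnected is at most
\[
\sum_{k} \binom{n}{k}\, \frac{N(k)\, M(r(n-k))}{M(rn)},
\]
where $N(k)$ is the number of $r$-regular $s$-uniform configurations on $k$ labelled vertices (when $s\mid rk$, and $0$ otherwise), and the sum runs over $1\le k\le n/2$. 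By symmetry it suffices to show this tends to $0$; then conditioning on simplicity costs only a constant factor, since the probability that $\cG_{n,r,s}$ is simple is bounded below by a positive constant (this is classical, e.g. via the switching method or the results cited for $\E Y_\cG$), so a statement that holds whp in the configuration model holds whp in $\Gamma_{n,r,s}$.

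The next step is to estimate the summand for small, medium, and large $k$ separately. For $k=O(1)$ (the dangerous regime, since a single "small" component is the typical obstruction), I would bound $N(k)\le (rk)!$ crudely and use $M(r(n-k))/M(rn) = \Theta\big(n^{-r(s-1)k/s}\big)\cdot(1+o(1))$, which follows from the exact formula for $M$ together with Stirling; the binomial contributes $n^k$. Thus the $k$-th term is $O\big(n^{k - r(s-1)k/s}\big) = O\big(n^{-k(r(s-1)-s)/s}\big)$, which is $o(1)$ provided $r(s-1) > s$, i.e. $r \ge 2$ and $s\ge 3$, or $r\ge 3$ and $s=2$. For larger $k$ up to $\varepsilon n$ one uses the same exponent calculation with Stirling bounds on $\binom{n}{k}$ and $N(k)$, checking that the exponential rate stays negative; and for $k$ of order $n$ (between $\varepsilon n$ and $n/2$) one appeals to the known component-size concentration or simply to a direct entropy estimate showing the count of balanced disconnected configurations is exponentially smaller than $M(rn)$. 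Summing over all $k$ then gives $\Pr(\text{disconnected})\to 0$.

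The one genuinely excluded case is $(r,s)=(2,2)$, where the above exponent $r(s-1)-s$ vanishes and the $k=O(1)$ terms are each $\Theta(1)$ rather than $o(1)$; this is exactly the case where $\cG_{n,2,2}$ is a random union of cycles and fails to be connected with constant probability, consistent with~\eqref{eq:ham}. For every other $(r,s)$ with $r,s\ge 2$, either $s\ge 3$ (so $r(s-1)\ge 2(s-1)\ge s+1>s$) or $s=2$ and $r\ge 3$ (so $r(s-1)=r\ge 3>2=s$), and the argument closes. I expect the main obstacle to be purely bookkeeping: making the medium-range estimate uniform in $k$, i.e. verifying that the exponential growth rate $g(k/n)$ of $\binom{n}{k}N(k)M(r(n-k))/M(rn)$ is strictly negative on the whole interval $(0,1/2]$ rather than just near the endpoints. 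This is a single-variable convexity/calculus check on an explicit function built from $x\log x$ terms, analogous to (but much easier than) the optimisation underlying $\rho(s)$ in Theorem~\ref{thm:threshold}, and since the lemma is only stated as a sketch one can afford to be brief about it or to cite the standard connectivity results for $r\ge s\ge 3$ from~\cite{DZ} and handle only the remaining small cases in detail.
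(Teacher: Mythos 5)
Your proposal is correct and takes essentially the same approach as the paper: a union bound over vertex subsets of size $k\le n/2$ that could form a cut, estimating for each the probability that no edge crosses, and summing. The paper phrases this directly in terms of the simple-hypergraph counts $|\Gamma_{a,r,s}|\,|\Gamma_{n-a,r,s}|/|\Gamma_{n,r,s}|$ while you work in the configuration model and transfer via the bounded simplicity probability, but these differ only by $\Theta(1)$ and $(r!)^{\pm a}$ factors, and your explicit exponent $r(s-1)-s>0$ (failing exactly at $(r,s)=(2,2)$) is precisely the calculation the paper's sketch leaves implicit.
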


\begin{proof}
Suppose that $A\subset [n]$ is a subset of vertices with $|A|=a$, where
$1\leq a\leq n/2$.
The probability that $(A,[n]-A)$ is a cut in $\cG_{n,r,s}$ is
\begin{equation}
\label{eq:prob}
    \frac{|\Gamma_{a,r,s}|\, |\Gamma_{n-a,r,s}|}{|\Gamma_{n,r,s}|}.
 \end{equation}
It follows from (\ref{Gammanrs}), (\ref{eq:simple-graph}) and (\ref{eq:simple-hypergraph}) that
\[ |\Gamma_{n,r,s}|  = \Theta(1)\, \,
 \frac{(rn)!}{(rn/s)!\, (s!)^{rn/s}\, (r!)^n}.\] 
Substituting this into (\ref{eq:prob}) and summing over all $A$ with $1\leq |A|\leq n/2$, we conclude that
the probability that $\cG_{n,r,s}$ is disconnected is $o(1)$ whenever 
$r,s\geq 2$ and $(r,s)\neq (2,2)$.
\end{proof} 
	
	\section{Preliminaries}\label{s:preliminaries}

Throughout, $\mathbb{N}$ denotes the nonnegative integers and $(a)_b = a(a-1)\cdots (a-b+1)$
is the falling factorial.
	
A \textit{$1$-cycle} (or \textit{loop}) is a hypergraph consisting of one edge which contains a repeated vertex. 
A \textit{$2$-cycle} is a hypergraph consisting of two edges which intersect in at least $2$ vertices. 
	For $j \ge 3$, a \textit{$j$-cycle} is a hypergraph with $j$ edges which can be labelled $e_1, e_2, \dots, e_{j}$ such that there exists distinct vertices $v_1, \dots, v_j$ where $v_i \in e_i \cap e_{i+1}$ for $i = 1, \dots, j$ (where $e_{j+1} \equiv e_1$).  
	
A \emph{(Berge) path} in $H$ consists of a sequence $v_0,e_1,v_1,e_2,\ldots, e_j,v_j$ where $v_0,v_1,\ldots, v_j$ are distinct vertices, $e_1,\ldots, e_j$
are distinct edges, and $v_{i-1}, v_i\in e_i$ for all $i=1,\ldots, j$.
A hypergraph is \emph{connected} if there is a path between every pair of
vertices.
A \emph{spanning tree} $T$ in
a hypergraph $H$ is a connected spanning subhypergraph of $H$ which contains no 
$j$-cycles for all positive integers $j$.
In particular, as $T$ contains no 2-cycles it follows that edges of $T$ 
overlap in at most one vertex.
	
An $s$-uniform tree with $t$ edges has $(s-1)t+1$ vertices, and the 
	number of (labelled) $s$-uniform trees on $n = (s-1)t + 1$ vertices is
	given by
	\begin{equation}
	\label{s-uniform-spanning-trees}
	\frac{n^{t-1}\, (n-1)!}{t! \, ((s-1)!)^t}.
	\end{equation}
	This formula was proved in~\cite[Corollary 1]{lavault} 
	and~\cite[Theorem 2]{sivasubramanian2006spanning}.
	See also~\cite{shannigrahi}. 
	Note that in the graph case $s=2$ we recover Cayley's formula. 
	
Suppose that $\boldsymbol{\delta} = (\delta_1, \dots, \delta_n)$
is the degree sequence of an $s$-uniform tree on $n$ vertices.
Then $\boldsymbol{\delta}$ is a sequence of $n$ positive integers such that
	\[
	\delta_1 + \dots + \delta_n = \frac{s(n-1)}{s-1} = st,
	\]
	and any such sequence $\boldsymbol{\delta}$ is called a \emph{tree degree sequence}. 
	Bacher~\cite[Theorem 1.1]{bacher} proved that the number of
	labelled $s$-uniform trees on $n=(s-1)t+1$ vertices with
	degree sequence $\boldsymbol{\delta}$ is
	\begin{equation} \label{degree-sequence-hypertree}
	\binom{t - 1}{\delta_1 -1, \dots, \delta_n - 1}\, \frac{(n-1)!}{t! \, ((s-1)!)^t} = \frac{(s-1)(n-2)! }{((s-1)!)^{\frac{n-1}{s-1}}} \prod_{i=1}^n \frac{1}{(\delta_i-1)!}.
	\end{equation}
	This generalises the formula for the graph case proved by Moon~\cite{moon1970counting}.

Now we introduce some special families of cycles which will be used in our
analysis.
	An $s$-uniform $1$-cycle is \textit{loose} if the edge contains $s-1$ distinct vertices, and an $s$-uniform $2$-cycle is \textit{loose} if the intersection of the two edges has size $2$. For $j \ge 3$, an $s$-uniform $j$-cycle is \textit{loose} if
	\[
	\abs{e_k \cap e_\ell} = \left\{ \begin{array}{ll}
	1 & \text{if } k-\ell \equiv \pm 1 \pmod{j}, \\
	0 & \text{otherwise}
	\end{array} \right.
	\]
	for $k \ne \ell$. A loose $j$-cycle $C$ contains $(s-1)j$ vertices.
	
	Let $v$ be a vertex in a loose $j$-cycle $C$. We say $v$ is \textit{$C$-external} (or \textit{external}) if $v$ has degree $2$ in $C$. 
	Otherwise, we say $v$ has degree $1$ and we say it is \textit{$C$-internal} (or \textit{internal}).

	\subsection{Configuration model}\label{s:config}
	
	The configuration model for regular uniform
	hypergraphs is a generalisation of the configuration
	model for graphs, introduced by by Bollob{\'a}s~\cite{bollobas1980probabilistic}.
	Take $rn$ \emph{points} in $n$ cells, $B_1,\ldots, B_n$,
	each containing $r$ points.  Then partition the points 
	into $rn/s$ subsets of size $s$, called \textit{parts}.  Each such partition $P$ corresponds
	to a hypergraph $G(P)$, which may not be simple, obtained by replacing
	each cell by a vertex and replacing each part $\{ a_1,\ldots, a_s\}$
	in the partition by an edge $\{ v_{i_1},\ldots, v_{i_s}\}$ such that
	$a_j\in B_{i_j}$ for $j=1,\ldots, s$.
	If a part contains more than one point from the same cell then the corresponding
	edge in $P$ is a loop, and if the partition has two parts which contain
	points from precisely the same cells, with the same multiplicity,
	then these parts produce a repeated edge in $G(P)$.
	
	Each simple hypergraph corresponds to precisely $(r!)^n$ partitions, so an $r$-regular $s$-uniform simple hypergraph can be chosen uniformly at random by choosing a partition uniformly at random and rejecting the result if it has loops or multiple edges.
	A partition $P$ is said to be simple if $G(P)$ is a simple hypergraph.
	
	Denote the set of possible partitions by $\Omega_{n,r,s}$, and let 
	$\mathcal{P}_{n,r,s}$ be a partition chosen uniformly at random from $\Omega_{n,r,s}$.
	
	A \textit{subpartition} $P'$ is a subset of a partition $P \in \Omega_{n,r,s}$. 
	A subpartition of $P$ projects to a subhypergraph of $G(P)$.
	
	The configuration model allows us to prove some properties of $\cG_{n,r,s}$ by performing computations in $\cP_{n,r,s}$. When $s\mid t$, the number of partitions of a set of $t$ points into $t/s$ parts of size $s$ is
	\[
	p(t) = \frac{t!}{(t/s)! (s!)^{t/s}}.
	\]
	Hence the number of partitions in $\Omega_{n,r,s}$ is
	\begin{equation} \label{eq:partitions-hypergraph}
	\abs{\Omega_{n,r,s}} = p(rn) = \frac{(rn)!}{(rn/s)! \, (s!)^{rn/s}}.
	\end{equation}
	Therefore, the number of $r$-regular $s$-uniform simple hypergraphs on $n$ vertices is precisely
	\begin{equation}
\label{Gammanrs}
	\abs{\Gamma_{n,r,s}} =\frac{(rn)! \, \Pr(\text{Simple})}{(rn/s)! \, (s!)^{rn/s} (r!)^n},
	\end{equation}
	where ``Simple'' is the event that the partition is simple.
	Thus, an asymptotic formula for $\abs{\Gamma_{n,r,s}}$ can be found by estimating $\Pr(\text{Simple})$. 
	When $s=2$ the event ``Simple''
	means no 1-cycles or 2-cycles, and Bender and Canfield~\cite{bender1978asymptotic}
	showed that
	\begin{equation} \label{eq:simple-graph}
	\Pr(\text{Simple}) \sim \exp\left( -(r^2-1)/4 \right).
	\end{equation}
	Cooper, Frieze, Molloy and Reed showed in \cite{cooper1996perfect} that for fixed integers $r \ge 2$ and $s \ge 3$, 
	\begin{equation} \label{eq:simple-hypergraph}
	\Pr(\text{Simple}) \sim \exp\left( -(r-1)(s-1)/2 \right).
	\end{equation}

	\subsection{Small subgraph conditioning method for hypergraphs}\label{s:SSCM}
	
	Robinson and Wormald showed in~\cite{robinson1992almost,robinson1994almost} that 
	almost all $r$-regular graphs are Hamiltonian, for any fixed $r \ge 3$,
using an analysis of variance technique now known as the \textit{small subgraph conditioning method}. 
	We restate the small subgraph conditioning method from \cite{janson1995random} (with slightly different notation).
	
	\begin{theorem}[Janson \protect{\cite[Theorem 1]{janson1995random}}] \label{thm:subgraph}
		Let $\lambda_j > 0$ and $\zeta_j \ge -1$, $j = 1, 2, \dots$, be constants and suppose that for each $n$ there are random variables $X_{j,n}$, $j = 1,2, \dots$, and $Y_n$ (defined on the same probability space) such that $X_{j,n}$ is a nonnegative integer valued and $\E Y_n \ne 0$ (at least for large $n$), and furthermore the following conditions are satisfied:
		\begin{enumerate}
			\item[\emph{(A1)}] $X_{j,n} \overset{d}{\longrightarrow} Z_j$ as $n \to \infty$ jointly for all $j$, where $Z_j \sim \operatorname{Po}(\lambda_j)$ are independent Poisson random variables;
			\item[\emph{(A2)}] For any finite sequence $x_1, \dots, x_m$ of nonnegative integers,
			\[
			\frac{\E(Y_n | X_{1,n} = x_1, \dots, X_{m,n} = x_m)}{\E Y_n} \to \prod_{j=1}^m (1+ \zeta_j)^{x_j} e^{- \lambda_j \zeta_j} \quad \text{ as } n \to \infty;
			\]
			\item[\emph{(A3)}] $\displaystyle \sum_{j \ge 1} \lambda_j \zeta_j^2 < \infty$;
			\item[\emph{(A4)}] $\dfrac{\E Y_n^2}{(\E Y_n)^2} \to \exp\left( \displaystyle \sum_{j \ge 1} \lambda_j \zeta_j^2 \right) \quad \text{ as } n \to \infty$.
		\end{enumerate}
		Then
		\[
		\frac{Y_n}{\E Y_n} \,\,\, \overset{d}{\longrightarrow} \,\,\, W = \prod_{j=1}^\infty (1+\zeta_j)^{Z_j} e^{-\lambda_j \zeta_j} \quad \text{ as } n \to \infty;
		\]
		moreover, this and the convergence in \emph{(A1)} hold jointly. The infinite product defining $W$ converges asymptotically almost surely and in $L^2$, with 
		\[
		\E W = 1 \qquad \text{and} \qquad \E W^2 = \exp\left( \sum_{j \ge 1} \lambda_j \zeta_j^2 \right) = \lim_{n \to \infty} \frac{\E Y_n^2}{(\E Y_n)^2}.
		\] 
Furthermore, the event $W=0$ equals, up to a set of probability zero,
the event that $Z_j>0$ for some $j$ with $\zeta_j = -1$.
		In particular, $W > 0$ almost surely if and only if every $\zeta_j > -1$. 
	\end{theorem}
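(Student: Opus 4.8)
The plan is to realise the limit $W$ as the $L^2$-limit of a martingale of partial products, then to show that conditioning $Y_n/\E Y_n$ on the first $m$ counts $X_{1,n},\dots,X_{m,n}$ recovers essentially all of its variance once $m$ is large, and finally to pass to the limit first in $n$ and then in $m$. For the first ingredient, set $W_m=\prod_{j=1}^m(1+\zeta_j)^{Z_j}e^{-\lambda_j\zeta_j}$ for $m\ge1$. Since $Z_j\sim\operatorname{Po}(\lambda_j)$ has probability generating function $u\mapsto e^{\lambda_j(u-1)}$, a routine computation gives $\E[(1+\zeta_j)^{Z_j}]=e^{\lambda_j\zeta_j}$ and $\E[(1+\zeta_j)^{2Z_j}]=e^{\lambda_j((1+\zeta_j)^2-1)}$, so each factor of $W_m$ has mean $1$ and second moment $e^{\lambda_j\zeta_j^2}$ (with the convention $0^0=1$ this remains valid when $\zeta_j=-1$). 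As the $Z_j$ are independent, $(W_m)_{m\ge1}$ is a nonnegative martingale with $\E W_m=1$ and $\E W_m^2=\exp\bigl(\sum_{j\le m}\lambda_j\zeta_j^2\bigr)$, which is bounded by~(A3); hence $W_m$ converges almost surely and in $L^2$ to $W:=\prod_{j\ge1}(1+\zeta_j)^{Z_j}e^{-\lambda_j\zeta_j}$, with $\E W=1$, $\E W^2=\exp\bigl(\sum_{j\ge1}\lambda_j\zeta_j^2\bigr)$, and $W_m=\E[W\mid Z_1,\dots,Z_m]$ because the omitted factors are independent of mean $1$.

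Next I would fix $m$, write $X_n^{(m)}=(X_{1,n},\dots,X_{m,n})$ and $f_m(x)=\prod_{j=1}^m(1+\zeta_j)^{x_j}e^{-\lambda_j\zeta_j}$, and control $Y_n/\E Y_n$ through its conditional expectation given $X_n^{(m)}$. Condition~(A2) gives $\E[Y_n/\E Y_n\mid X_n^{(m)}=x]\to f_m(x)$ for each fixed $x\in\N^m$; combined with $X_n^{(m)}\xrightarrow{d}(Z_1,\dots,Z_m)$ from~(A1) and Scheff\'{e}'s lemma (pointwise convergence of mass functions with unit total mass), this yields $\bigl((X_{j,n})_{j\ge1},\,\E[Y_n/\E Y_n\mid X_n^{(m)}]\bigr)\xrightarrow{d}\bigl((Z_j)_{j\ge1},\,W_m\bigr)$ as $n\to\infty$, and --- after a uniform-integrability argument powered by the bound $\E[(Y_n/\E Y_n)^2]=O(1)$ from~(A4) --- also $\E\bigl[(\E[Y_n/\E Y_n\mid X_n^{(m)}])^2\bigr]\to\E W_m^2=\exp\bigl(\sum_{j\le m}\lambda_j\zeta_j^2\bigr)$. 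Plugging this into the orthogonal decomposition $\E[(Y_n/\E Y_n)^2]=\E\bigl[(\E[Y_n/\E Y_n\mid X_n^{(m)}])^2\bigr]+\E\bigl[\operatorname{Var}(Y_n/\E Y_n\mid X_n^{(m)})\bigr]$ and using~(A4) for the left-hand side forces $\limsup_{n\to\infty}\E\bigl[\operatorname{Var}(Y_n/\E Y_n\mid X_n^{(m)})\bigr]\le\exp\bigl(\sum_{j\ge1}\lambda_j\zeta_j^2\bigr)-\exp\bigl(\sum_{j\le m}\lambda_j\zeta_j^2\bigr)$, which tends to $0$ as $m\to\infty$ by~(A3).

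By Chebyshev's inequality this makes $\lim_{m\to\infty}\limsup_{n\to\infty}\Pr\bigl(|Y_n/\E Y_n-\E[Y_n/\E Y_n\mid X_n^{(m)}]|>\varepsilon\bigr)=0$ for every $\varepsilon>0$, and since $\E[Y_n/\E Y_n\mid X_n^{(m)}]$ is a function of $X_n^{(m)}$ converging jointly with $(X_{j,n})_{j\ge1}$ to $\bigl((Z_j),W_m\bigr)$ while $\bigl((Z_j),W_m\bigr)\xrightarrow{d}\bigl((Z_j),W\bigr)$ as $m\to\infty$, the standard approximation lemma for convergence in distribution gives $\bigl((X_{j,n})_{j\ge1},\,Y_n/\E Y_n\bigr)\xrightarrow{d}\bigl((Z_j),\,W\bigr)$ --- the claimed convergence, jointly with~(A1). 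It remains to describe $\{W=0\}$. If every $\zeta_j>-1$ then each $V_j:=(1+\zeta_j)^{Z_j}e^{-\lambda_j\zeta_j}$ is strictly positive, so $\{W>0\}=\{\sum_{j}\log V_j>-\infty\}$ is a tail event of the independent sequence $(Z_j)$ and, by Kolmogorov's $0$--$1$ law, has probability $0$ or $1$ --- necessarily $1$ since $\E W=1>0$. If instead $\zeta_{j_0}=-1$ for some $j_0$, then $V_{j_0}=0$ on $\{Z_{j_0}\ge1\}$, so $W=0$ there; whereas on the complementary event $\bigcap_{j:\zeta_j=-1}\{Z_j=0\}$ --- which is independent of $\{V_j:\zeta_j>-1\}$, and on which the factors indexed by $\zeta_j=-1$ contribute the constant $\exp\bigl(\sum_{j:\zeta_j=-1}\lambda_j\bigr)$, finite and positive by~(A3) --- the product $\prod_{j:\zeta_j>-1}V_j$ is again almost surely positive by the same $0$--$1$ argument. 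Hence $\{W=0\}$ equals $\bigcup_{j:\zeta_j=-1}\{Z_j>0\}$ up to a null set, and in particular $W>0$ almost surely precisely when every $\zeta_j>-1$.

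The step I expect to be most delicate is the uniform-integrability argument in the second paragraph: condition~(A2) controls the conditional expectation only pointwise in the conditioning values, and since each $X_{j,n}$ ranges over an unbounded set one must invoke~(A4) with care --- typically by truncating the atypically large values of $X_n^{(m)}$ --- in order to upgrade that pointwise convergence to convergence of the mixed moment $\E\bigl[(Y_n/\E Y_n)\,f_m(X_n^{(m)})\bigr]$ and of $\E\bigl[(\E[Y_n/\E Y_n\mid X_n^{(m)}])^2\bigr]$. Everything else --- the Poisson generating-function identities, the variance decomposition, the approximation lemma for weak convergence, and the $0$--$1$ law --- is routine.
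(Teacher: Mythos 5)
The paper does not actually prove this statement: it is quoted (with one typographical correction) from Janson's paper, so there is no in-paper proof to compare against. Your proposal essentially reconstructs Janson's original argument: the partial-product martingale $W_m$ bounded in $L^2$ via (A3), the projection $\E[Y_n/\E Y_n\mid X_{1,n},\dots,X_{m,n}]$ converging jointly with the cycle counts to $((Z_j),W_m)$ via (A1)--(A2), the conditional-variance decomposition combined with (A4), Billingsley's double-limit approximation lemma, and the $0$--$1$-law analysis of $\{W=0\}$, which correctly delivers the second-last statement (the refinement the authors rely on in Corollary 2.5). One caveat on the step you yourself flag as delicate: the claim $\E\bigl[(\E[Y_n/\E Y_n\mid X_n^{(m)}])^2\bigr]\to\E W_m^2$ does not follow from (A4) alone, since $L^2$-boundedness gives uniform integrability of the conditional expectations but not of their squares; however, the variance decomposition only requires the one-sided bound $\liminf_{n\to\infty}\E\bigl[(\E[Y_n/\E Y_n\mid X_n^{(m)}])^2\bigr]\ge\E W_m^2$, which follows from the weak convergence you establish together with Fatou's lemma (lower semicontinuity of $x\mapsto x^2$ on $[0,\infty)$). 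With that adjustment your argument is complete and agrees in structure with Janson's proof.
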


(In the above statement, we have corrected a typographical error from~\cite{janson1995random},
which had $W>0$ instead of $W=0$ in the second-last sentence.)
	
	Janson remarks in \cite{janson1995random} that the index set $\Z^+$ may be replaced by any other countably infinite set, and that $0^0$ is defined to be $1$. 
	%
	We will apply Theorem \ref{thm:subgraph} with the following random variables:
	\begin{itemize}
		\item Let $Y$ be the number of $s$-uniform spanning trees in a random partition $P\in\cP_{n,r,s}$.
		\item Let $X_1$ be the number of $1$-cycles in a random partition $P\in\cP_{n,r,s}$.
		\item For $j \ge 2$, let $X_j$ be the number of loose $j$-cycles in a random partition $P\in\cP_{n,r,s}$.
	\end{itemize}
With $r,s\geq 2$ fixed, it is well-known that $X_j \to Z_j$ as $n \to \infty$, where $Z_j$ are asymptotically independent Poisson random variables with mean
	\begin{equation} \label{eq:lambda}
	\lambda_j = \frac{(r-1)^j(s-1)^j}{2j}.
	\end{equation}
This was proved for graphs ($s=2$) by Bollob{\' a}s~\cite{bollobas1980probabilistic}, 
and by Cooper, Frieze, Molloy and Reed~\cite{cooper1996perfect} when $s\geq 3$.
	To be more precise, Cooper, Frieze, Molloy and Reed worked with the random variable $X_j'$, the number of $j$-cycles (not necessarily loose), and showed that $X_j'$ has the same asymptotic distribution as $X_j$, as the contribution to $X_j'$ from non-loose $j$-cycles forms a negligible fraction of $X_j'$. This verifies that (A1) of Theorem \ref{thm:subgraph} holds.

	In order to verify condition (A2), the following lemma is helpful.
	\begin{lemma}[\protect{Janson \cite[Lemma 1]{janson1995random}}]
		Let $\lambda_j' \ge 0$, $j = 1, 2, \dots$ be constants. Suppose that \emph{(A1)} holds, that $Y_n \ge 0$ and that
		\[
		\text{\emph{(A2$'$)}} \qquad \frac{\E(Y_n (X_{1,n})_{x_1} \cdots (X_{m,n})_{x_m})}{\E Y_n}  \to \prod_{j=1}^m (\lambda_j' )^{x_j} \quad \text{ as } n \to \infty,
		\]
		for every finite sequence $x_1, \dots, x_m$ of nonnegative integers. Then condition \emph{(A2)} holds with $\lambda_j' = \lambda_j (1 + \zeta_j)$.
	\end{lemma}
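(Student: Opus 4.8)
The plan is to recognise the left-hand side of (A2) as a ratio of two probabilities --- one computed under $\Pr$, the other under the measure obtained by weighting by $Y_n$ --- and to evaluate each limit by a Poisson method-of-moments argument. For each $n$ large enough that $\E Y_n\neq 0$, let $\widetilde\Pr_n$ be the probability measure with $\widetilde\Pr_n(A)=\E(Y_n\,\mathbf 1_A)/\E Y_n$; this is a genuine probability measure since $Y_n\ge 0$ and $\widetilde\Pr_n$ of the whole space is $\E Y_n/\E Y_n=1$. Write $\widetilde\E_n$ for the corresponding expectation. Fix a finite sequence $x_1,\dots,x_m$ of nonnegative integers and put $E=\{X_{1,n}=x_1,\dots,X_{m,n}=x_m\}$. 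By the definition of conditional expectation, for all $n$ with $\Pr(E)>0$,
\[
\frac{\E(Y_n\mid E)}{\E Y_n}=\frac{\E(Y_n\,\mathbf 1_E)}{\E Y_n\,\Pr(E)}=\frac{\widetilde\Pr_n(E)}{\Pr(E)} .
\]
By (A1), $\Pr(E)\to\prod_{j=1}^m e^{-\lambda_j}\lambda_j^{x_j}/x_j!>0$, so the denominator is eventually bounded away from $0$ and the identity above is valid for large $n$. Hence it suffices to show that $\widetilde\Pr_n(E)\to\prod_{j=1}^m e^{-\lambda_j'}(\lambda_j')^{x_j}/x_j!$ and then divide by the (A1) limit.

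Next I would prove that, under $\widetilde\Pr_n$, the vector $(X_{1,n},\dots,X_{m,n})$ converges in distribution to a vector of independent Poisson variables with means $\lambda_1',\dots,\lambda_m'$. Hypothesis (A2$'$) says exactly that the mixed descending-factorial moments under $\widetilde\Pr_n$,
\[
\widetilde\E_n\bigl[(X_{1,n})_{x_1}\cdots(X_{m,n})_{x_m}\bigr]=\frac{\E\bigl(Y_n\,(X_{1,n})_{x_1}\cdots(X_{m,n})_{x_m}\bigr)}{\E Y_n},
\]
converge to $\prod_{j=1}^m(\lambda_j')^{x_j}$, and $\prod_{j=1}^m(\lambda_j')^{x_j}$ is precisely the corresponding mixed factorial moment of independent $\operatorname{Po}(\lambda_j')$ variables. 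Converting factorial moments to ordinary moments through the identity $z^{x}=\sum_{k=0}^{x}\stirling{x}{k}\,(z)_k$, a finite sum with nonnegative coefficients, applied in each coordinate, shows that every mixed ordinary moment of $(X_{1,n},\dots,X_{m,n})$ under $\widetilde\Pr_n$ converges to that of $(\operatorname{Po}(\lambda_1'),\dots,\operatorname{Po}(\lambda_m'))$. Since a product of Poisson laws on $\N^m$ is determined by its moments (its moment generating function is finite on all of $\R^m$), the multivariate method of moments gives the claimed distributional convergence under $\widetilde\Pr_n$. As the variables are $\Z^m$-valued, weak convergence is equivalent to pointwise convergence of probability mass functions, so indeed $\widetilde\Pr_n(E)\to\prod_{j=1}^m e^{-\lambda_j'}(\lambda_j')^{x_j}/x_j!$.

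Combining the two limits,
\[
\frac{\E(Y_n\mid E)}{\E Y_n}\longrightarrow\prod_{j=1}^m\frac{e^{-\lambda_j'}(\lambda_j')^{x_j}/x_j!}{e^{-\lambda_j}\lambda_j^{x_j}/x_j!}=\prod_{j=1}^m\Bigl(\frac{\lambda_j'}{\lambda_j}\Bigr)^{x_j}e^{\lambda_j-\lambda_j'} .
\]
With $\zeta_j$ defined by $\lambda_j'=\lambda_j(1+\zeta_j)$ we have $\lambda_j'/\lambda_j=1+\zeta_j$ and $\lambda_j-\lambda_j'=-\lambda_j\zeta_j$, so the right-hand side equals $\prod_{j=1}^m(1+\zeta_j)^{x_j}e^{-\lambda_j\zeta_j}$, which is exactly condition (A2). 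The main obstacle is the passage from (A2$'$) to weak convergence under $\widetilde\Pr_n$: one must check that convergence of factorial moments transfers to convergence of ordinary moments (routine, since only finitely many Stirling-number terms occur in each coordinate) and, more substantially, invoke the multivariate method of moments, which requires that the limiting product-Poisson law be moment-determined and that joint convergence of all mixed moments force joint convergence in distribution. The remaining points --- that $\widetilde\Pr_n$ is a probability measure, that $\Pr(E)$ is eventually bounded away from zero, and the final algebraic identification of the constants --- are routine bookkeeping.
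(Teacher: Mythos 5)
Your proof is correct: the paper itself states this lemma as a quotation of Janson \cite[Lemma 1]{janson1995random} without giving a proof, and your argument (reweighting by $Y_n/\E Y_n$, reading (A2$'$) as convergence of joint factorial moments under the tilted measure, applying the method of moments to get the product-Poisson limit, and dividing by the (A1) point probabilities) is essentially Janson's original argument. The routine caveats you flag (moment-determinacy of the product Poisson law, positivity of $\Pr(E)$ for large $n$, and the $\lambda_j'=0$, i.e.\ $\zeta_j=-1$, case with the $0^0=1$ convention) are all handled correctly.
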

	
	There are some challenges when applying the small subgraph conditioning method to 
	regular uniform hypergraphs with $s\geq 3$.  In the graph case, a partition is simple precisely when $X_1=X_2=0$.
	For hypergraphs with $s\geq 3$, this is no longer true: a hypergraph is simple if and only if it
	has no 1-cycles and no repeated edges: some 2-cycles are allowed, as long as the 
two edges overlap in between $2$ and $s-1$ vertices.
	
	Fortunately, we can translate some asymptotic properties from the configuration model to random hypergraphs. For any event $\mathcal{E} \subseteq \Omega_{n,r,s}$, we have
	\begin{equation}
	\Pr(P \in \mathcal{E} \, | \, \text{Simple} ) \le \frac{\Pr(P \in \mathcal{E})}{\Pr(\text{Simple})}. 
	\label{eq:conditional-simple}
	\end{equation}
	
	Altman, Greenhill, Isaev and Ramadurai proved the following lemma in~\cite{loose}.
	
	\begin{lemma}[\protect{\cite[Lemma 2.1]{loose}}] \label{lemma:translate}
		Fix integers $r,s \ge 2$. For any positive integer $n$ such that $s \mid rn$, let $\widehat{P}$ be a uniformly random partition in $\Omega_{n,r,s}$ with no $1$-cycles, let $P_{S}$ be a uniformly random simple partition in $\Omega_{n,r,s}$ and let $P$ be a uniformly random partition in $\Omega_{n,r,s}$. Let $Y: \Omega_{n,r,s} \to \Z$ be a random variable. Then as $n \to \infty$ along integers such that $s \mid rn$, the following two properties hold.
		\begin{enumerate}
			\item[\emph{(a)}] If $\Pr(Y(P) \in A) = o(1)$, then $\Pr(Y(P_{S}) \in A) = o(1)$ for any $A \subseteq \Z$.
			\item[\emph{(b)}] $\Pr(Y(\widehat{P}) \in A) - \Pr(Y(P_{S}) \in A) = o(1)$ for any $A \subseteq \Z$.
		\end{enumerate}
	\end{lemma}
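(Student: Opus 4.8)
The plan is to derive both parts from two facts already recorded: $\Pr(\text{Simple})$ is bounded away from zero by (\ref{eq:simple-graph}) and (\ref{eq:simple-hypergraph}), and, for $s\ge 3$, $\Pr(\text{Simple})$ is in fact asymptotically equal to the probability of having no $1$-cycles. Throughout, write $\mathcal{L}\subseteq\Omega_{n,r,s}$ for the event ``$P$ has no $1$-cycles'' and $\mathcal{M}$ for the event ``$P$ has no repeated edges'', so that $\text{Simple}=\mathcal{L}\cap\mathcal{M}$; thus $\widehat{P}$ is $P$ conditioned on $\mathcal{L}$ and $P_S$ is $P$ conditioned on $\mathcal{L}\cap\mathcal{M}$.

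For part (a) I would apply (\ref{eq:conditional-simple}) with $\mathcal{E}=\{P\in\Omega_{n,r,s}:Y(P)\in A\}$, which gives
\[ \Pr\bigl(Y(P_S)\in A\bigr)\ \le\ \frac{\Pr\bigl(Y(P)\in A\bigr)}{\Pr(\text{Simple})}. \]
Since $\Pr(\text{Simple})=\Theta(1)$ by (\ref{eq:simple-graph}) and (\ref{eq:simple-hypergraph}), an $o(1)$ numerator forces an $o(1)$ left-hand side. This step uses nothing about $\mathcal{L}$ and is valid for all $r,s\ge 2$.

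For part (b) the crucial quantity is $\Pr(\mathcal{M}\mid\mathcal{L})=\Pr(\text{Simple})/\Pr(\mathcal{L})$. When $s\ge 3$, the number $X_1$ of $1$-cycles converges in distribution to $\operatorname{Po}(\lambda_1)$ with $\lambda_1=(r-1)(s-1)/2$ (see (\ref{eq:lambda})), so $\Pr(\mathcal{L})=\Pr(X_1=0)\to e^{-\lambda_1}$; combined with (\ref{eq:simple-hypergraph}), which says $\Pr(\text{Simple})\to e^{-(r-1)(s-1)/2}=e^{-\lambda_1}$, this yields $\Pr(\mathcal{M}\mid\mathcal{L})\to 1$, equivalently $\Pr(\neg\mathcal{M}\mid\mathcal{L})=o(1)$. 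Now for any $A\subseteq\Z$,
\[ \Pr\bigl(Y(\widehat{P})\in A\bigr)=\Pr\bigl(Y(P)\in A\mid\mathcal{L}\bigr),\qquad \Pr\bigl(Y(P_S)\in A\bigr)=\frac{\Pr\bigl(Y(P)\in A,\ \mathcal{M}\mid\mathcal{L}\bigr)}{\Pr(\mathcal{M}\mid\mathcal{L})}. \]
The numerator on the right differs from $\Pr(Y(P)\in A\mid\mathcal{L})$ by at most $\Pr(\neg\mathcal{M}\mid\mathcal{L})=o(1)$ and the denominator is $1-o(1)$, so the two displayed probabilities differ by $o(1)$, which is part (b).

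I expect the one genuinely delicate point to be the input $\Pr(\mathcal{M}\mid\mathcal{L})\to 1$ for $s\ge 3$: it rests on knowing $\Pr(\text{Simple})$ precisely enough (equation (\ref{eq:simple-hypergraph}) of Cooper--Frieze--Molloy--Reed) together with the Poisson limit for $X_1$, and it expresses that a repeated edge is asymptotically far rarer than a loop once $s\ge 3$. This is exactly what breaks when $s=2$, since then $\Pr(\text{Simple})\to e^{-(r^2-1)/4}$ is strictly smaller than $\Pr(\mathcal{L})\to e^{-(r-1)/2}$, so conditioning on having no $2$-cycles genuinely distorts probabilities; for $s=2$ the natural remedy is to build the ``no $2$-cycles'' condition into $\widehat{P}$ as well (making (b) trivial, since then $\widehat{P}$ and $P_S$ have the same law), or to bypass this lemma entirely and apply the small subgraph conditioning method to the simple model directly via $\text{Simple}=\{X_1=X_2=0\}$.
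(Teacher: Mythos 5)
Your argument is correct and follows essentially the same route as the paper's brief remark: part (a) is exactly (\ref{eq:conditional-simple}) combined with $\Pr(\text{Simple}) = \Theta(1)$, and part (b) rests on $\Pr(\mathcal{M}\mid\mathcal{L}) \to 1$. The only stylistic difference in (b) is that you deduce this by matching the limits $\Pr(\text{Simple}) \to e^{-\lambda_1}$ (from (\ref{eq:simple-hypergraph})) and $\Pr(\mathcal{L}) \to e^{-\lambda_1}$ (from the Poisson limit for $X_1$), whereas the paper's remark instead takes as input the Cooper--Frieze--Molloy--Reed estimate $\Pr(\neg\mathcal{M}) = o(1)$ and divides by $\Pr(\mathcal{L}) = \Theta(1)$; the two are interchangeable.

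Your flag about $s=2$ is correct and points to a real imprecision in the lemma as restated here. With $s=2$, $\Pr(\mathcal{M}\mid\mathcal{L}) \to e^{-\lambda_2} < 1$, and $Y = X_2$ is an explicit counterexample to (b): it is identically zero under $P_S$ but converges to a nondegenerate $\operatorname{Po}(\lambda_2)$ under $\widehat{P}$. Thus part (b) holds only for $s \ge 3$. This is consistent with the paper's own justification (which cites (\ref{eq:simple-hypergraph}), valid only for $s \ge 3$) and with its usage: the proofs of Theorems~\ref{thm:threshold} and~\ref{thm:distribution} invoke part (b) only when $s \ge 3$, handling $s = 2$ by conditioning on $\{X_1 = X_2 = 0\}$ directly via Theorem~\ref{thm:subgraph}. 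So your proof establishes exactly what is true and used, and the stated hypothesis ``$r,s \ge 2$'' should really be read as applying to (a) only.
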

	Property (a) follows from \eqref{eq:conditional-simple}, while property (b) follows from \eqref{eq:simple-hypergraph} and the fact that the probability that two parts in a random partition $P\in \cP_{n,r,s}$ give rise to a repeated edge is $o(1)$ (as remarked by Cooper, Frieze, Molloy and Reed in \cite{cooper1996perfect}). 
	
	Property (b) essentially tells us that the distribution of $Y$ that arises from conditioning on $X_1 = 0$ is asymptotically equivalent to the distribution $Y$ conditioned on ``Simple". This allows us to apply the following corollary, very slightly adapted from~\cite[Corollary~2.6]{loose}, which will be useful in the proof of Theorem \ref{thm:distribution}. 

	\begin{corollary} \label{cor:translate-back-to-hypergraphs}
		Suppose that $Y_n$ and $X_{j,n}$ satisfy conditions \emph{(A1)}--\emph{(A4)} of Theorem \ref{thm:subgraph}. Let $\widehat{Y}_n$ be the random variable obtained from $Y_n$ by conditioning on the event $X_{1,n} = 0$. Then
		\[
		\frac{\widehat{Y}_n}{\E Y_n} \,\,\, \overset{d}{\longrightarrow}  \,\,\,e^{-\lambda_1 \zeta_1} 
		\, \prod_{j=2}^\infty (1+\zeta_j)^{Z_j} e^{-\lambda_j \zeta_j} \quad \text{as} \quad n \to \infty.
		\]
		Moreover, if $\zeta_j > -1$ for all $j \ge 2$ then asymptotically almost surely $\widehat{Y}_n > 0$.
	\end{corollary}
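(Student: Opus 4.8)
The plan is to treat Corollary~\ref{cor:translate-back-to-hypergraphs} as a purely probabilistic consequence of Janson's small subgraph conditioning method (Theorem~\ref{thm:subgraph}), in the same spirit as~\cite{loose}. First I would apply Theorem~\ref{thm:subgraph}: since $(Y_n)$ and the $(X_{j,n})$ satisfy \emph{(A1)}--\emph{(A4)}, the theorem gives $Y_n/\E Y_n\overset{d}{\longrightarrow}W=\prod_{j\ge 1}(1+\zeta_j)^{Z_j}e^{-\lambda_j\zeta_j}$ and, crucially, states that this convergence holds \emph{jointly} with the convergence $X_{j,n}\overset{d}{\longrightarrow}Z_j$ of \emph{(A1)}. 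Equivalently, $(Y_n/\E Y_n,X_{1,n},X_{2,n},\dots)$ converges in distribution, in the product topology, to $(W,Z_1,Z_2,\dots)$, where the $Z_j\sim\operatorname{Po}(\lambda_j)$ are independent and the infinite product defining $W$ converges almost surely.

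The substance of the argument is to extract the conditional limit law from this joint convergence. Since $X_{1,n}$ is $\mathbb N$-valued, I would fix a continuous function $h\colon\R\to[0,1]$ with $h(0)=1$ and $h(k)=0$ for every nonzero integer $k$ (for instance a tent function supported on $(-1,1)$), so that $h(X_{1,n})=\mathbf{1}[X_{1,n}=0]$ identically and $h(Z_1)=\mathbf{1}[Z_1=0]$ almost surely. For any bounded continuous $g$, the function $(x,(z_j)_{j\ge 2},z_1)\mapsto g(x,(z_j)_{j\ge 2})\,h(z_1)$ is bounded and continuous, so joint convergence gives
\[
\E\!\left[g\!\left(\tfrac{Y_n}{\E Y_n},(X_{j,n})_{j\ge 2}\right)\mathbf{1}[X_{1,n}=0]\right]\longrightarrow\E\!\left[g\!\left(W,(Z_j)_{j\ge 2}\right)\mathbf{1}[Z_1=0]\right].
\]
Taking $g\equiv 1$ shows $\Pr(X_{1,n}=0)\to\Pr(Z_1=0)=e^{-\lambda_1}>0$, so the conditioning is well defined for large $n$; dividing, $\widehat{Y}_n/\E Y_n$ converges in distribution to the conditional law of $W$ given $Z_1=0$. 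As $W=(1+\zeta_1)^{Z_1}e^{-\lambda_1\zeta_1}\prod_{j\ge 2}(1+\zeta_j)^{Z_j}e^{-\lambda_j\zeta_j}$ with $(1+\zeta_1)^0=1$, and $Z_1$ is independent of $(Z_j)_{j\ge 2}$, that conditional law is precisely the law of $e^{-\lambda_1\zeta_1}\prod_{j\ge 2}(1+\zeta_j)^{Z_j}e^{-\lambda_j\zeta_j}$, which is the asserted limit.

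For the final claim, suppose $\zeta_j>-1$ for all $j\ge 2$ and write $W'$ for this limiting random variable. On the event $\{Z_1=0\}$ one has $W'=W$, and by the last sentences of Theorem~\ref{thm:subgraph} the event $\{W=0\}$ forces $Z_j>0$ for some $j$ with $\zeta_j=-1$; this cannot occur on $\{Z_1=0\}$, since such a $j$ must equal $1$ (because $\zeta_j>-1$ for $j\ge 2$), contradicting $Z_1=0$. Hence $W'>0$ almost surely conditionally on $\{Z_1=0\}$, and as $W'$ depends only on $(Z_j)_{j\ge 2}$ we get $W'>0$ almost surely, i.e. $\Pr(W'=0)=0$. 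Since $\widehat{Y}_n\ge 0$ and $\E Y_n>0$ for large $n$, the Portmanteau theorem applied to the closed set $(-\infty,\varepsilon]$ gives $\limsup_{n}\Pr(\widehat{Y}_n=0)\le\limsup_{n}\Pr(\widehat{Y}_n/\E Y_n\le\varepsilon)\le\Pr(W'\le\varepsilon)$ for every $\varepsilon>0$; letting $\varepsilon\downarrow 0$ then yields $\Pr(\widehat{Y}_n>0)\to 1$.

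I expect the one genuinely delicate point to be the conditioning step — passing from unconditional joint convergence to a conditional limit law. The tent-function device, together with the fact that the conditioning event has asymptotically positive probability, makes this rigorous; everything else is a direct reading of Theorem~\ref{thm:subgraph} and standard facts about convergence in distribution.
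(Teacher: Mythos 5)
Your proposal is correct, and it takes a genuinely different route from the paper. The paper's own proof of this corollary is a single remark: it cites \cite[Corollary~2.6]{loose} and observes that the hypothesis $\zeta_1>-1$ assumed there can be dropped by using the second-to-last assertion of Theorem~\ref{thm:subgraph} (which characterises the event $\{W=0\}$) rather than the final ``if and only if'' assertion. You instead reconstruct the argument from scratch. Your key ideas --- exploiting the joint convergence of $(Y_n/\E Y_n, X_{1,n})$ asserted in Theorem~\ref{thm:subgraph}, using a continuous tent function $h$ with $h(0)=1$, $h(k)=0$ for nonzero integers $k$ to turn the indicator $\mathbf{1}[X_{1,n}=0]$ into a bounded continuous functional (valid since $X_{1,n}$ is integer-valued), noting $\Pr(X_{1,n}=0)\to e^{-\lambda_1}>0$, identifying the conditional limit via independence of $Z_1$ from $(Z_j)_{j\geq 2}$ and the convention $0^0=1$, and finishing the positivity claim with Portmanteau plus the second-to-last assertion of Theorem~\ref{thm:subgraph} --- are all sound and essentially reproduce the content of \cite[Corollary~2.6]{loose} that the paper takes as known. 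What you gain is a self-contained, transparent argument that does not require the reader to chase the reference; what the paper gains is brevity, deferring the mechanics (which you have correctly reconstructed) to the cited source. One minor cosmetic point: in the last paragraph you only use the pair $(Y_n/\E Y_n,X_{1,n})$, so the full infinite-coordinate product-topology convergence you invoke earlier is more than strictly needed, though it is harmless since it does follow from the finite-dimensional joint convergence in a Polish product space.
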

	
\begin{proof}
The statement of~\cite[Corollary~2.6]{loose} made the assumption that $\zeta_j > -1$ 
for all $j\geq 1$, and the proof used the final statement of Theorem~\ref{thm:subgraph}.  However,
we can drop the assumption that $\zeta_1 > -1$
if we instead apply the second-last statement from Theorem~\ref{thm:subgraph}.
\end{proof}

\section{First moment}\label{s:first}
	
We fix integers $r,s \ge 2$, where $(r,s) \ne (2,2)$, and work in the configuration model $\cP_{n,r,s}$, where $s \mid rn$ and $n = (s-1)t +1$ for some $t \in \N$. 
	
	\begin{lemma} \label{lem:EY-hypergraph}
Let $r,s \ge 2$ be fixed integers with $(r,s) \ne (2,2)$.  
Then as $n\to\infty$ along $\cN_{(r,s)}$,
		\[ 
	\E Y \sim \frac{(s-1)\, \sqrt{r-1} }{n \, (rs-r-s)^{\frac{s+1}{2(s-1)}}} 
\left( \frac{(s-1)^{r}(r-1)^{(r-1)s}}{r^{rs-r-s} (rs-r-s)^{\frac{rs-r-s}{s-1}}} \right)^{n/s}.
		\]
	\end{lemma}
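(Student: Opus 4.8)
The plan is to compute $\E Y$ directly in the configuration model by summing over all possible $s$-uniform spanning trees $T$ on the vertex set $[n]$ the probability that a uniformly random partition $P\in\Omega_{n,r,s}$ contains (a subpartition projecting to) $T$. Since a spanning tree has $t=\frac{n-1}{s-1}$ edges, and its edges pairwise intersect in at most one vertex, each such $T$ with degree sequence $\deltavec=(\delta_1,\dots,\delta_n)$ specifies, at each vertex $i$, which $\delta_i$ of the $r$ points in cell $B_i$ are used, and groups the $st$ chosen points into $t$ designated parts. The number of partitions of $\Omega_{n,r,s}$ containing a fixed such subpartition is $p(rn-st)=p(r n - s t)$ by \eqref{eq:partitions-hypergraph}-type counting, so each $T$ contributes $\big(\prod_i (r)_{\delta_i}\big)\, p(rn-st)\,/\,p(rn)$ after accounting for the $(r)_{\delta_i}$ ways to assign labelled points to the slots at vertex $i$ (the internal ordering within a part is already absorbed into the definition of ``part''; one must be careful with the symmetry factor, but the bookkeeping is standard). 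Multiplying by the number of $s$-uniform trees with degree sequence $\deltavec$, which is given exactly by Bacher's formula \eqref{degree-sequence-hypertree}, and summing over all tree degree sequences gives
\[
\E Y = \frac{p(rn-st)}{p(rn)}\cdot \frac{(s-1)(n-2)!}{((s-1)!)^{t}} \sum_{\deltavec} \prod_{i=1}^n \frac{(r)_{\delta_i}}{(\delta_i-1)!},
\]
where the sum is over all $\deltavec\in(\Z^+)^n$ with $\sum_i\delta_i = st$ and $1\le \delta_i\le r$.

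The main work is then asymptotic evaluation of the sum $S_n:=\sum_{\deltavec}\prod_i \frac{(r)_{\delta_i}}{(\delta_i-1)!}$. The standard approach is a generating function / saddle-point argument: write $S_n = [z^{st}]\, \big(f(z)\big)^n$ where $f(z)=\sum_{\delta=1}^{r}\frac{(r)_\delta}{(\delta-1)!}\,z^\delta$, extract the coefficient by Cauchy's integral formula, and apply the saddle-point method (or a local central limit theorem for the associated i.i.d.\ truncated-Poisson-type random variables). The saddle point $z_0$ solves $z_0 f'(z_0)/f(z_0) = st/n = s(n-1)/(n(s-1)) \to s/(s-1)$, i.e.\ the mean degree equals $\frac{s}{s-1}$; one checks the solution is $z_0$ with $f(z_0), f'(z_0)$ expressible so that the leading exponential base works out to $(s-1)^r(r-1)^{(r-1)s}/\big(r^{rs-r-s}(rs-r-s)^{(rs-r-s)/(s-1)}\big)$ after combining with $\big(((s-1)!)^{-t}\big)$ and the ratio $p(rn-st)/p(rn)$, which by Stirling contributes its own exponential and polynomial factors. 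The polynomial prefactor $\frac{(s-1)\sqrt{r-1}}{n(rs-r-s)^{(s+1)/2(s-1)}}$ emerges from: the $\frac{1}{n}$-type factor in $(n-2)!/(n-1)!$-normalisations, the $(2\pi n \sigma^2)^{-1/2}$ from the saddle point (with $\sigma^2$ the variance of the truncated degree distribution, which one computes to give the $\sqrt{r-1}$ and a power of $(rs-r-s)$), and Stirling corrections from $p(rn-st)/p(rn)$.

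The hardest step will be the careful asymptotic bookkeeping: matching all the polynomial-order factors exactly (the exponential base is comparatively routine once the saddle point is identified). In particular one must (i) verify the saddle point lies in the disc of convergence and is non-degenerate, (ii) track Stirling error terms through $p(rn)$, $p(rn-st)$, $(n-2)!$, and $((s-1)!)^t$ with $t=(n-1)/(s-1)$, and (iii) identify $z_0$, $f(z_0)$, $f'(z_0)$ and the variance $\sigma^2$ in closed form in terms of $r$ and $s$ — this is where the specific expressions $(r-1)$, $(rs-r-s)$ appear, and it requires solving the saddle-point equation explicitly, which is feasible because $f$ is essentially a derivative of $(1+z)^r$-type. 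As a consistency check, the final answer must agree with \eqref{eq:EY_G} after converting from the configuration model to simple hypergraphs via \eqref{eq:simple-hypergraph}: indeed dividing by $\exp((r-1)(s-1)/2)$ — wait, in fact $\E Y$ here is already the configuration-model expectation, and \eqref{eq:EY_G} is the hypergraph expectation, so the two should differ exactly by the factor $\exp\!\big(\frac{rs-r-1}{2(r-1)}\big)$ from $\E Y_{\cG}$ versus the absence of such a factor — comparing with \eqref{eq:EY_G} term by term confirms the computation and pins down any stray constants.
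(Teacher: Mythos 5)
Your setup is correct and follows the paper's structure exactly through the generating-function step: sum $\E Y$ over subpartitions projecting to trees, group by degree sequence, invoke Bacher's formula \eqref{degree-sequence-hypertree}, and write the degree-weighted sum as $[z^{st}]\,f(z)^n$ with $f(z)=\sum_{\delta\ge 1}\frac{(r)_\delta}{(\delta-1)!}z^\delta$. Where you diverge is the coefficient extraction: you reach for the saddle-point method, but the crucial simplification (which the paper uses) is that
\[
f(z)=r\sum_{\delta\ge 1}\binom{r-1}{\delta-1}z^\delta = rz(1+z)^{r-1},
\]
so
\[
[z^{st}]\,f(z)^n = r^n\,[z^{st-n}](1+z)^{(r-1)n}=r^n\binom{(r-1)n}{t-1},
\]
since $st-n=t-1$. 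The sum therefore has an \emph{exact} closed form, and the entire remainder of the proof is nothing more than Stirling's approximation applied to an explicit product of factorials; no local CLT, no variance computation, no verification of saddle-point non-degeneracy is needed. Your approach would produce the right answer — a saddle-point evaluation of $[z^{st-n}](1+z)^{(r-1)n}$ is just a roundabout rederivation of Stirling for a binomial coefficient — but it trades a one-line identity for unnecessary analytic machinery, and the "hardest step" you flag (tracking polynomial prefactors through the saddle-point formula) disappears entirely once the closed form is noticed. One further caution: your concluding consistency check is confused. The relation between $\E Y$ (configuration model) and $\E Y_{\cG}$ (simple hypergraph) is \emph{not} obtained by dividing by $\Pr(\text{Simple})$; $Y$ is a count of spanning trees, not a simplicity indicator, so the two expectations differ by $e^{-\lambda_1\zeta_1}$, a fact the paper establishes later through the small-subgraph-conditioning apparatus (see the proof of Theorem~\ref{thm:distribution}) and not something available to you at this stage.
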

	
	\begin{proof}
	Let $\cT_{n}$ be the set of all $s$-uniform trees on $n$ vertices. 
For a random partition $P\in \mathcal{P}_{n,r,s}$, we can write 
		\[
\E Y = \sum_{P_T:G(P_T) \in \cT_{n}} \Pr(P_T \subseteq P) = \sum_{P_T:G(P_T) \in \cT_{n}} \frac{ \abs{\{P \in \Omega_{n,r,s}: P_T \subseteq P \}} }{\abs{\Omega_{n,r,s}}}.
		\]
	Selecting $P_T$ uses up $st$ points so, for a given $P_T$, the size of $\{P \in \Omega_{n,r,s}: P_T \subseteq P \}$ is $p(rn-st)$. This is the number of ways to partition the remaining $rn-st$ points after the points of the tree are selected. 
	Recall that $\cD_n$ is the set of possible tree degree sequences on $n$ vertices. Given $\boldsymbol{\delta} \in \cD_n$, define $\cT_n(\boldsymbol{\delta})$ to be the set of trees with a degree sequence $\boldsymbol{\delta}$. 
We can write
	\begin{equation}
\label{exp-intermediate}
		\abs{\Omega_{n,r,s}} \E Y =  p(rn-st) \sum_{\boldsymbol{\delta} \in \cD_n}\sum_{T \in \cT_n(\boldsymbol{\delta})} \sum_{P_T:G(P_T) = T} 1.
		\end{equation}
Consider a subpartition that projects to a given tree $T \in \cT_{n}(\boldsymbol{\delta})$, for a given 
$\boldsymbol{\delta} \in \cD_n$. Exactly $\delta_i$ of the points in cell $i$ must contribute to $P_T$, 
and there are $(r)_{\delta_j}$ ways to choose and order these points. So there are 
$\prod_{j=1}^n (r)_{\delta_j}$ possible subpartitions $P_T$ which project to the given spanning tree $T$.
Therefore, using (\ref{degree-sequence-hypertree}) for the second line, the number of
subpartitions $P_T$ which project to some spanning tree is
		\begin{align}
\sum_{\boldsymbol{\delta} \in \cD_n} \abs{\cT_n(\boldsymbol{\delta})} \prod_{j=1}^n (r)_{\delta_j} 
 &= 
  \frac{(s-1)(n-2)!}{((s-1)!)^t} \left(\sum_{\boldsymbol{\delta} \in \cD_n} \prod_{j=1}^n \frac{(r)_{\delta_j}}{(\delta_j-1)!}\right) \nonumber \\
&= \frac{(s-1)(n-2)!}{((s-1)!)^t} \,  [z^{st}]\left(\sum_{i=1}^\infty \frac{(r)_{i}}{(i-1)!} \, z^i \right)^n \nonumber \\ 
   &= \frac{(s-1)(n-2)!}{((s-1)!)^t}\,  [z^{st}]\left(\sum_{i=1}^{\infty} rz \binom{r-1}{i-1} z^{i-1} \right)^n \nonumber \\
 &= \frac{r^n (s-1)(n-2)!}{((s-1)!)^t} \, \binom{(r-1)n}{t-1}.  \label{number-of-tree-subpartitions}
		\end{align}
Here square brackets denotes coefficient extraction.
Substituting (\ref{number-of-tree-subpartitions}) into (\ref{exp-intermediate}) and
applying \eqref{eq:partitions-hypergraph} gives, by definition of $t$,
\begin{align}
	\E Y 
	  &= \frac{p(rn-st)}{p(rn)} \cdot \frac{r^n(s-1)(n-2)!}{((s-1)!)^t}  \binom{(r-1)n}{t-1}\nonumber \\
		&=r^n s^{\frac{n-1}{s-1}}  \,
   \frac{((r-1)n)!\, (n-1)!\, (rn/s)!}{(rn)!\, \left(\frac{n-1}{s-1}\right)! \, 
     \left(\frac{(rs-r-s)n + s}{s(s-1)}\right)!}.
 \label{eq:EY}
		\end{align}
		The result follows by applying Stirling's approximation.
	\end{proof}

\bigskip
For future reference we note that by \eqref{eq:partitions-hypergraph}, Lemma~\ref{lem:EY-hypergraph} and Stirling's approximation,
	\begin{equation}
\label{for-future-reference}
	\abs{\Omega_{n,r,s}} \E Y \sim \frac{(s-1)\, \sqrt{(r-1)s\ } }{n(rs-r-s)^{\frac{s+1}{2(s-1)}}} 
 \left(\frac{r^s\, (r-1)^{(r-1)s}\, n^{r(s-1)}}{(rs-r-s)^{\frac{rs-r-s}{s-1}}\, ((s-2)!)^{r}\, 
  e^{r(s-1)}} \right)^{n/s}.
	\end{equation}
\bigskip
	
	In Section~\ref{s:threshold} we characterise pairs $(r,s)$ for which $\E Y$ tends to infinity.
	This provides the threshold function $\rho(s)$ when $s\geq 5$ and, using (\ref{eq:conditional-simple}),
	provides the negative half of the threshold result.  In order to complete the proof,
	we must apply small subgraph conditioning.

	\section{Effect of short cycles} \label{ch:EYX}

Fix a positive integer $m$ and a sequence $\xvec = (x_1,\ldots,x_m) \in \N^m$. 
Write $\ell = x_1 + \dots + x_m$.  
Let $\mathcal{S}(\xvec)$ be the set of sequences $(P_1,\ldots, P_\ell)$ of subpartitions 
such that $G(P_1),\ldots, G(P_{x_1})$ are distinct 1-cycles, and 
\[ G(P_{x_1 + \cdots + x_{j-1} + 1}),\ldots, G(P_{x_1 + \cdots + x_{j-1} + x_j})\]
are distinct loose $j$-cycles, for $j=2,\ldots, m$.
Then let $\mathcal{S}^\ast(\xvec)$ be the set of all $(P_1,\ldots, P_\ell)\in \mathcal{S}(\xvec)$ such that the cycles $G(P_1),\ldots, G(P_\ell)$ are vertex-disjoint. For a random partition $P \in \cP_{n,r,s}$, we can write
	\begin{equation}
\E[ Y (X_1)_{x_1}\cdots (X_m)_{x_m}] = \sum_{(P_1,\ldots, P_{\ell})\in\mathcal{S}(\xvec)}\,\,
	\sum_{P_T: G(P_T) \in \cT_n} \Pr(P_1\cup \cdots \cup P_\ell \cup P_T \subseteq P).
\label{eq:joint-moment}
	\end{equation}
We will find that $\E[ Y (X_1)_{x_1}\cdots (X_m)_{x_m}]$ is asymptotically dominated 
by the contribution from vertex-disjoint cycles. So we first evaluate
	\begin{align}
\Sigma^\ast(\xvec) &=
  \sum_{(P_1,\ldots, P_{\ell})\in\mathcal{S}^\ast(\xvec)}\,\,
	\sum_{P_T: G(P_T) \in \cT_n} \Pr(P_1\cup \cdots \cup P_\ell \cup P_T \subseteq P) 
\nonumber \\
	&= \sum_{(P_1,\ldots, P_{\ell})\in\mathcal{S}^\ast(\xvec)}\,\,
	\sum_{P_T: G(P_T) \in \cT_n} \frac{\abs{\{P\in\Omega_{n,r,s}: P_1\cup \cdots \cup P_\ell \cup P_T \subseteq P \}}}{\abs{\Omega_{n,r,s}}}.\label{Sigma-star}
	\end{align}
To perform this count, we condition on the intersections between $P_T$ and each of
the subpartitions $P_1,\ldots, P_\ell$. 
We will use the lexicographical ordering on $s$-subsets of vertices to define
a corresponding ordering on parts of $P_i$, by applying the lexicographical ordering
to the set (or multiset) of $s$ cells corresponding to the $s$ points in the part. 
First suppose that $G(P_i)$ is a loose $j$-cycle with $j\geq 2$.
The part of $P_i$ which is lexicographically-least will be the starting part of $P_i$,
and we fix a direction around $P_i$ such that the second part visited is lexicographically
smaller than the last part.  Then we define a binary sequence $I_i\in \{0,1\}^j$ corresponding
to $P_i$ as follows: starting from the first part of $P_i$, in the fixed direction,
if the $k$'th part of $P_i$ belongs to $P_i\cap P_T$ then the $k$'th element of $I_i$ is one; otherwise it is zero. 
All sequences in $\{0,1\}^j$ represent possible intersections, except for $(1,\dots, 1)$ because a tree contains no cycles. In the case that $G(P_i)$ is a 1-cycle then $I_i = (0)$.
Denote the set of all possible intersection sequences for a cycle of length $j$ by
	\[
	\cI_j = \{0,1\}^j \setminus \{(1,\dots, 1)\}
	\]
and define the Cartesian product
	\[
	\cI(\xvec) = \prod_{j=1}^m \cI_j^{x_j}.
	\]
For $I \in \cI_j$, let $U(I) \in \{1,\dots,j\}$ be the number of entries in $I$ which equal zero. Given $\Ivec = (I_1,\dots,I_\ell) \in \cI(\xvec)$, let $u_i = U(I_i)$for $i=1,\ldots, m$ and define $u = u(\Ivec) = u_1 + \dots + u_\ell$.

Given $(P_1,\ldots, P_\ell,P_T)$, write $\iota(P_1,\ldots, P_\ell,P_T) = \Ivec \in \cI(\xvec)$
for the corresponding $\ell$-tuple of intersection sequences.
We can rewrite $\Sigma^\ast(\xvec)$ as
	\[ \abs{\Omega_{n,r,s}}\, \Sigma^\ast(\xvec) \, = \sum_{\Ivec\in \cI(\xvec)} \sum_{\substack{(P_1,\ldots, P_{\ell},P_T):\\ (P_1,\ldots, P_{\ell})\in\mathcal{S}^\ast(\xvec), \\ G(P_T) \in \cT_n,\\\iota(P_1,\ldots, P_\ell,P_T) = \Ivec}} \
\abs{\{P\in\Omega_{n,r,s}: P_1\cup \cdots \cup P_\ell \cup P_T \subseteq P \}}.
	\]
For a given $\Ivec \in \cI(\xvec)$, we evaluate the inner sum using the following process:
	\begin{enumerate}
		\item[]{\bf Step 1}: Choose a sequence $(P_1,\ldots, P_{\ell})\in\mathcal{S}^*(\xvec)$.
	\item[] {\bf Step 2}: 
Choose $P_T$ with $G(P_T)\in\cT_n$ such that $\iota(P_1,\ldots, P_\ell,P_T) = \Ivec$. 
		\item[] {\bf Step 3}: Partition the remaining points arbitrarily.
	\end{enumerate}

Define the subpartition $Q = P_1 \cup \dots \cup P_\ell$. Writing $\abs{P'}$ for
the number of parts in a subpartition $P'$, we have
	\[
	\abs{Q} = \sum_{j=1}^m j \, x_j = \sum_{i=1}^\ell \abs{P_i}.
	\]
\begin{lemma}
\label{lem:s1}
Let $r,s\geq 2$ be integers such that $(r,s)\neq (2,2)$
and fix $\xvec= (x_1,\ldots, x_m)\in\mathbb{N}^m$.
The number of ways to choose a sequence of subpartitions 
$(P_1,\dots,P_\ell)$ in $\mathcal{S}^\ast (\xvec)$ is
\begin{equation}
\label{eq:s1}
s_1(\xvec)  \sim \left( \frac{(r-1)\, r^{s-1} n^{s-1}}{(s-2)!} \right)^{\abs{Q}} \prod_{j=1}^m \,\,\, \frac{1}{(2j)^{x_j}} .
\end{equation}
\end{lemma}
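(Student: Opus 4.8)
The plan is to count the sequences $(P_1,\dots,P_\ell)\in\mathcal{S}^\ast(\xvec)$ by first counting, for each fixed cycle length $j$, the number of ways to choose a single loose $j$-cycle (as a subpartition, i.e. with points, not just vertices), then account for the vertex-disjointness constraint and the ordering within the $x_j$ cycles of each length. The key observation is that the vertex-disjointness and distinctness requirements only cost a $1+o(1)$ factor: since $m$ is fixed, all the cycles together occupy $O(1)$ vertices out of $n$, so when we build them one at a time the number of "forbidden" choices (reusing a vertex, or repeating a cycle) is smaller by a factor $\Theta(n^{-1})$ at each step. Hence asymptotically $s_1(\xvec)$ factorizes as a product over the individual cycles.

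**First I would** count subpartitions $P_i$ with $G(P_i)$ a loose $j$-cycle for a single fixed $j\geq 3$. A loose $j$-cycle has $j$ edges and $(s-1)j$ vertices, of which $j$ are external (degree $2$) and $(s-2)j$ are internal (degree $1$). To choose such a subpartition: pick an ordered sequence of $j$ external vertices around the cycle — roughly $n^j$ ways, with a correction of $\frac{1}{2j}$ for the dihedral symmetry of the cycle (rotations and the one reflection); then for each of the $j$ edges, pick the $s-2$ internal vertices, contributing $\binom{n}{s-2}^j\sim\bigl(n^{s-2}/(s-2)!\bigr)^j$; then, having fixed the vertex set of each edge, choose which points in each cell realize the edge. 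Each external vertex has degree $2$, so it contributes $(r)_2 = r(r-1)$ ordered point-choices; each internal vertex has degree $1$, contributing $r$ choices; but the $s!$ orderings of points within a part that give the same part must be divided out for each of the $j$ parts. Collecting these: the point-selection factor per edge is $\Theta\bigl(r^{s-1}(r-1)\bigr)$ up to the symmetry bookkeeping, and multiplying the $n^{s-1}/(s-2)!$ vertex factor per edge gives exactly the base $\frac{(r-1)r^{s-1}n^{s-1}}{(s-2)!}$ raised to the power $j = \abs{P_i}$, together with the $\frac{1}{2j}$ symmetry factor. The cases $j=1$ (loose loops) and $j=2$ (loose $2$-cycles) must be handled separately — a loose $1$-cycle is a single part whose $s$ points lie in $s-1$ distinct cells (one cell contributing two points), and a loose $2$-cycle is a pair of parts sharing exactly $2$ cells — but one checks directly that the same base and the same $\frac{1}{2j}$ factor ($\frac12$ and $\frac14$ respectively) emerge; this matches the known mean $\lambda_j$ in \eqref{eq:lambda} after dividing by $\abs{\Omega_{n,r,s}}$, which is a useful consistency check.

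**Then I would** assemble the full count: choosing the $x_j$ loose $j$-cycles for each $j$ in order contributes $\bigl(\text{single-}j\text{-cycle count}\bigr)^{x_j}$ up to a $1+o(1)$ factor (the distinctness of the $x_j$ cycles of length $j$, and their vertex-disjointness from each other and from cycles of other lengths, each remove only an $O(n^{-1})$ fraction, and the $x_j!$ overcounting from ordering identical-length cycles is absorbed because $\mathcal{S}^\ast(\xvec)$ takes ordered sequences — actually, re-examining: $\mathcal{S}(\xvec)$ and $\mathcal{S}^\ast(\xvec)$ are defined as sequences, so no division by $x_j!$ is needed, and the product structure is immediate). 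Multiplying over all $j$ from $1$ to $m$ and using $\abs{Q} = \sum_j j\,x_j$ gives precisely
\[
s_1(\xvec) \sim \left( \frac{(r-1)\, r^{s-1} n^{s-1}}{(s-2)!} \right)^{\abs{Q}} \prod_{j=1}^m \frac{1}{(2j)^{x_j}},
\]
as claimed.

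**The main obstacle** is the careful point-counting within cells for the external vertices of each cycle and getting the symmetry factor $\frac{1}{2j}$ exactly right while separating it cleanly from the per-edge base $\frac{(r-1)r^{s-1}n^{s-1}}{(s-2)!}$ — in particular verifying that the degree-$2$ vertices contribute an $(r-1)$ factor (relative to the $r$ from a degree-$1$ vertex) in a way that, combined with the $n^{s-1}/(s-2)!$ vertex-choice factor, packages as a clean power of $\abs{Q}$ rather than of $j$ or of the vertex count. The low-order cases $j=1,2$, where "loose" has a bespoke meaning and the cycle has no rotational structure to speak of, are where sign-errors or off-by-one factors are most likely to creep in, so I would check them against $\lambda_1 = \frac{(r-1)(s-1)}{2}$ and $\lambda_2 = \frac{(r-1)^2(s-1)^2}{4}$ explicitly.
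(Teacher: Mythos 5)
Your overall strategy is the same as the paper's: count subpartitions projecting to a single loose $j$-cycle by choosing a sequence of external vertices (dividing by $2j$ for direction and starting point), choosing the internal vertices of each edge as an unordered $(s-2)$-set, and then choosing the points in the configuration model; then argue that building the $\ell$ cycles one at a time, vertex-disjointness and distinctness only forbid $O(1)$ vertices at each step, so the count factorizes up to $1+o(1)$. This is exactly how the paper handles both the $\ell=1$ case and the iteration to $\ell>1$, and your treatment of $j=1,2$ (and the sanity check against $\lambda_j$) is in the right spirit.

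There is, however, one wrong step in your point-counting that you flag as dangerous but do not resolve: you assert that ``the $s!$ orderings of points within a part that give the same part must be divided out for each of the $j$ parts.'' No such division is needed, and carrying it through would give an answer off by $(s!)^{\abs{Q}}$. In the process you describe, each vertex is already assigned a specific role in the cycle, and the points you choose are attached to specific incident edges (the two points at an external vertex are an \emph{ordered} pair precisely because they go to the two distinct incident edges, whence the factor $(r)_2=r(r-1)$; internal vertices contribute $r$). Once these point-to-edge assignments are made, each part is simply the \emph{set} of $s$ points assigned to that edge, and distinct assignments yield distinct parts. There is no residual $s!$ overcount to correct for. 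The paper's proof makes this clear by first fixing the cycle $C$ (dividing by $2j((s-2)!)^j$ at that stage) and then observing that specifying $P_C$ amounts to choosing $(r(r-1))^j r^{(s-2)j}$ ordered point-assignments, with nothing further to divide out. If you replace your hedged ``$\Theta(r^{s-1}(r-1))$ up to the symmetry bookkeeping'' with this exact count and drop the $s!$ remark, your argument becomes a clean proof.
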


\begin{proof}
To begin, we claim that~\eqref{eq:s1} is true when $\ell=1$. 
First suppose that $j\geq 2$.
We must show that the number of ways of selecting a subpartition $P$
which projects to a loose $j$-cycle $C$ is asymptotically equal to
\[
\frac{1}{2j} \left( \frac{(r-1)\, r^{s-1} n^{s-1}}{(s-2)!} \right)^j.
\] 
Recall that, in a loose $j$-cycle $C$, the $C$-external vertices have degree $2$ and the 
$C$-internal vertices have degree $1$. 
To specify a single loose $j$-cycle $C$, choose a sequence of $(s-1)j$ vertices in $(n)_{(s-1)j} \sim n^{(s-1)j}$ ways,
then divide by $2j((s-2)!)^j$. Here, division by $2j$ adjusts for direction and starting 
point (where a starting point is a $C$-external vertex),  and division by $((s-2)!)^j$ 
adjusts for the order of the $s-2$ $C$-internal vertices in each edge. To specify $P_C$, we choose two points for each $C$-external vertex and one point for each 
$C$-internal vertex in the configuration model,  in $(r(r-1))^j r^{(s-2)j}$ ways.
	Hence (\ref{eq:s1}) holds for a single loose $j$-cycle when $j\geq 2$.

When $j=1$, the number of non-loose 1-cycles is $O(n^{s-2})$ 
while the number of loose 1-cycles is $\Theta(n^{s-1})$.  
There are
$n\binom{n-1}{s-2}\sim \frac{n^{s-1}}{(s-2)!}$ ways to choose the vertices of a loose
1-cycle, where the first-chosen vertex is external, then there are $\binom{r}{2}\, r^{s-2}$ 
ways to choose points corresponding to these vertices.  
Multiplying these shows that (\ref{eq:s1}) also holds when $j=1$.

When $\ell > 1$, observe that this process can be iterated.  The only change is that
the next cycle must be disjoint from all previously-selected cycles, ruling out $O(1)$
vertices.  Hence the number of ways to select a sequence of $(s-1)j$ vertices for the next 
$j$-cycle is $\big(n - O(1)\big)^{(s-1)j}\sim n^{(s-1)j}$, and all remaining calculations are the same as above.  This
shows that (\ref{eq:s1}) holds in general.
\end{proof}

\bigskip

Now suppose that a sequence of subpartitions $(P_1, \dots, P_\ell)\in\mathcal{S}^\ast(\xvec)$ has been chosen. To perform Step~2, we construct an irregular configuration model
$\cP_{n',\xvec}$ from the points that are so far unused. 

There are $n - (s-1)\abs{Q}$ cells which are not involved in any of $(P_1,\dots,P_\ell)$.
Any cell which corresponds to an external vertex of some $G(P_i)$ has $r-2$ unused points,
and any cell which corresponds to an internal vertex of some $G(P_i)$ has $r-1$ unused points.
Recall that $\Ivec = (I_1,\dots,I_\ell)$ determines a collection of disjoint paths 
contained in the subpartitions $P_1,\dots,P_\ell$. This collection of paths will form the
intersection of $P_T$ and $P_1\cup\cdots P_{\ell}$.

For each such path with at least one part, 
collect all the unused points and combine them together into an \textit{irregular cell}. 
If such a path consists of $k$ part then it contains of $k+1$ cells with $r-2$ points unused,
and $(s-2)k$ cells with $r-1$ points unused.
Thus, the resulting irregular cell has 
$(k+1)(r-2) + (s-2)k(r-1) = (rs-r-s)k + r-2$ points. 
For each cell which corresponds to an external vertex of some $G(P_i)$, but which 
is not contained in the intersection $P_T\cap P_i$,
we also form an irregular cell with $r-2$ points.  Note, this matches the earlier formula
with $k=0$; we can think of these external vertices as a length-0 path in the intersection.
Indeed, these cells are exactly those which are contained in two parts of $P_i$ which
both correspond to a~0 in the intersection sequence $I_i$.

Recall that $u=u(\Ivec)=u_1+\dots+u_\ell$ where $u_i = U(I_i)$ is the number of zero entries
in $I_i$.
Then $u$ also equals the number of irregular cells identified so far, as the 
paths in the intersection $P_i\cap P_T$ (of length zero or more) are in one-to-one 
correspondence with the zero entries in $I_i$.
The $u$ irregular cells we have identified so far are called \textit{external irregular cells}.

Finally, for each cell which corresponds to an internal vertex of some $G(P_i)$, which is 
not involved in the intersection $P_T\cap P_i$,
we form an \textit{internal irregular cell} with $r-1$ points. There are $(s-2)u$ such cells. 
\bigskip

To summarise the properties of our irregular configuration model:
	\begin{itemize}
		\item The total number of cells is $n' = n - (s-1)\abs{Q} + (s-1)u$.
		\item There are $n - (s-1)\abs{Q}$ regular cells with $r$ points each.
		\item There are $(s-2)u$ internal irregular cells with $r-1$ points each.
		\item There are $u$ external irregular cells.   If an external irregular cell 
was collapsed from a path with $k$ parts then it contains $(rs-r-s)k + r-2$ points.
	\end{itemize}

The number of ways to complete Step~2 equals the number of ways of choosing a subpartition 
$P'$ in this irregular configuration model such that $G(P')$ is a spanning tree.
The projection $T' = G(P')$ of this partition corresponds exactly to a tree 
$T\in\cT_n$,  with the subpaths determined by $\Ivec$ contracted to single vertices.
	
For a sequence $I \in \cI_j$ and $k \ge 0$, let $\qk{I}$ be the number of paths of 
length $k$ in the intersection encoded by $I$. (Recall the length-0 paths correspond
to cells which belong to two parts in the $j$-cycle which are both encoded by 0 in $I$.)
By a slight abuse of notation, write $\qk{\Ivec} = \sum_{i=1}^\ell \qk{I_i}$.

The next result is proved in Section~\ref{sec:lem-s2}.

\bigskip

\begin{lemma}
\label{lem:s2}
Fix $\Ivec=(I_1,\dots,I_\ell) \in\cI(\xvec)$ and let $u=u(\Ivec)$.  
Fix a sequence of subpartitions $(P_1,\dots,P_\ell) \in \mathcal{S}^\ast (\xvec)$ and let 
$Q = P_1 \cup \dots \cup P_\ell$.
Then the number of ways to extend $Q$ to a subpartition $Q \cup P_T$
consistent with $\Ivec$,
such that $G(P_T)\in\mathcal{T}_n$, is 
\begin{align*}
   & s_2(\xvec,\Ivec)\\
  &\sim \frac{\sqrt{r-1}\, (s-1)^2\, ((s-1)!)^{\frac{1}{s-1}}}
 {(rs-r-s)^{\frac{3s-1}{2(s-1)}}\, n^2}\,
  \left(\frac{(rs-r-s)^{s+1}\, n^{s-1}}{(r-1)\, (s-1)^{s-1}\, (s-1)!}\right)^u\,
  \left(\frac{(s-2)!}{(r-1)\, r^{s-1}\, n^{s-1}}\right)^{\abs{Q}}\\
 & \qquad \qquad \times
   \, \left(\frac{r\, (r-1)^{r-1}\, (s-1)^{r-1}\, n}{e((s-1)!)^{\frac{1}{s-1}}
  \, (rs-r-s)^{\frac{rs-r-s}{s-1}}}\right)^n\,\,
\prod_{k=0}^{m-1}\, \left(k + \frac{r-2}{rs-r-s}\right)^{\qk{\Ivec}}.
\end{align*}
\end{lemma}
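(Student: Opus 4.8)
The plan is to evaluate $s_2(\xvec,\Ivec)$ \emph{exactly} — before taking asymptotics — by identifying it with a weighted count of $s$-uniform trees in the irregular configuration model $\cP_{n',\xvec}$, and then to invoke Bacher's formula~\eqref{degree-sequence-hypertree} together with a generating-function identity, exactly as in the proof of Lemma~\ref{lem:EY-hypergraph}. Throughout write $t=(n-1)/(s-1)$, and record the bookkeeping identities $\sum_{k\ge0}\qk{\Ivec}=u$ and $\sum_{k\ge0}k\,\qk{\Ivec}=\abs{Q}-u$, valid because the zero entries of the $I_i$ index the intersection paths while the one entries index their parts; combined with $n'=n-(s-1)\abs{Q}+(s-1)u$, these give $t':=(n'-1)/(s-1)=t-\abs{Q}+u$.

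The first step is a combinatorial correspondence. Having fixed $Q=P_1\cup\cdots\cup P_\ell$ and $\Ivec$, the intersection paths are determined; deleting from an admissible $P_T$ (one with $G(P_T)\in\cT_n$ and $\iota(P_1,\ldots,P_\ell,P_T)=\Ivec$) the parts that lie in these paths leaves a subpartition $P'$ of $\cP_{n',\xvec}$, and contracting the paths in $G(P_T)$ produces the hypergraph $T':=G(P')$ on the $n'$ cells. I claim $P_T\mapsto(T',P')$ is a bijection onto the set of pairs $(T',P')$ with $T'$ an arbitrary $s$-uniform tree on the $n'$ cells and $G(P')=T'$; consequently $s_2(\xvec,\Ivec)=\sum_{T'}\prod_c(p_c)_{\delta_c'}$, the sum running over such $T'$, where $\delta_c'$ is the degree of cell $c$ in $T'$ and $p_c$ is the number of points in cell $c$. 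Contracting a sub-path of a tree yields an $s$-uniform tree on $n'$ vertices (by acyclicity, any edge not on the path meets it in at most one vertex), with $t'$ edges. Conversely, expanding each external irregular cell back into its length-$k$ path and redistributing the incident edges of $T'$ among that path's vertices — the redistribution being exactly recorded by which point of the cell is used — produces a connected spanning subhypergraph of $[n]$ on all $n$ vertices with $(\abs{Q}-u)+t'=t=(n-1)/(s-1)$ edges, hence an $s$-uniform tree, since a connected spanning hypergraph on $n$ vertices attaining the minimum edge count is a tree (in particular simple). Degenerate situations cause no trouble: when, say, $r=2$ and an external irregular cell from a length-$0$ path has $r-2=0$ points, no admissible $P_T$ exists, but then the factor $p_c=0$ makes the formula below vanish accordingly.

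Now $p_c=r$ for each of the $n-(s-1)\abs{Q}$ regular cells, $p_c=r-1$ for each of the $(s-2)u$ internal irregular cells, and $p_c=(rs-r-s)k+r-2$ for an external irregular cell coming from a length-$k$ path. Applying Bacher's formula~\eqref{degree-sequence-hypertree}, summing over tree degree sequences $\boldsymbol{\delta}'$ with $\sum_c\delta_c'=st'$, and using $\sum_{i\ge1}\frac{(p_c)_i}{(i-1)!}z^i=p_cz(1+z)^{p_c-1}$ to extract $[z^{st'}]$ (there are $n'$ cells, and $st'-n'=t'-1$),
\begin{align*}
 s_2(\xvec,\Ivec)
 &=\frac{(s-1)(n'-2)!}{((s-1)!)^{t'}}\sum_{\boldsymbol{\delta}'}\prod_c\frac{(p_c)_{\delta_c'}}{(\delta_c'-1)!}\\
 &=\frac{(s-1)(n'-2)!}{((s-1)!)^{t'}}\Bigl(\prod_c p_c\Bigr)\binom{\sum_c(p_c-1)}{t'-1}.
\end{align*}
A short computation gives $\prod_c p_c=r^{\,n-(s-1)\abs{Q}}(r-1)^{(s-2)u}(rs-r-s)^{u}\prod_{k\ge0}\bigl(k+\frac{r-2}{rs-r-s}\bigr)^{\qk{\Ivec}}$ (a finite product, since $\qk{\Ivec}=0$ for $k\ge m$), and, since the number of points lying outside the cycles is $rn-s\abs{Q}$, we get $\sum_c(p_c-1)=rn-s\abs{Q}-n'=(r-1)n-\abs{Q}-(s-1)u$.

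Substituting these expressions and $t'-1=t-\abs{Q}+u-1$ into the displayed formula for $s_2(\xvec,\Ivec)$ and applying Stirling's approximation — using that $\abs{Q},u,\ell,m=O(1)$, so that every cell count is $n-O(1)$ — yields the claimed asymptotic expression; this final step is routine but lengthy, entirely parallel to the end of the proof of Lemma~\ref{lem:EY-hypergraph}. I expect the main obstacle to be the bijective step: one must verify carefully that expanding the contracted paths always recreates a genuine $s$-uniform tree on $[n]$, so that no admissible $T'$ is overcounted or spuriously excluded — and this is precisely where the characterisation of $s$-uniform trees as the minimum-edge connected spanning hypergraphs is used.
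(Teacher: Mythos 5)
Your proof is correct and reaches the same asymptotic formula by a cleaner computational route. Both you and the paper reduce Step~2 to counting subpartitions $P'$ of the irregular configuration model with $G(P')$ a spanning tree, and both invoke Bacher's formula \eqref{degree-sequence-hypertree} together with the identity $\sum_{i\ge1}\frac{(p)_i}{(i-1)!}z^i = pz(1+z)^{p-1}$. The divergence is in the order of operations: the paper first sums over the regular-cell degrees (producing the quantity $A_{(\boldsymbol{\delta^{\text{ext}}},\boldsymbol{\delta^{\text{int}}})}$, a binomial coefficient that still depends on $\abs{\boldsymbol{\delta^{\text{ext}}}}+\abs{\boldsymbol{\delta^{\text{int}}}}$), applies Stirling to this intermediate quantity, and only then evaluates the remaining finite sums over the bounded irregular degrees via the closed-form identities \eqref{eq:sum1}--\eqref{eq:sum2}. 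You instead run the generating function over \emph{all} cells simultaneously, obtaining the exact closed form
\[
s_2(\xvec,\Ivec) \;=\; \frac{(s-1)(n'-2)!}{((s-1)!)^{t'}}\,\Bigl(\prod_c p_c\Bigr)\,\binom{(r-1)n-\abs{Q}-(s-1)u}{\,t'-1\,},
\]
so that a single application of Stirling then yields the stated asymptotics. This dispenses with the intermediate asymptotic for $A_{(\cdot,\cdot)}$ and with the explicit evaluation of the irregular-cell sums, at the cost of having all the Stirling bookkeeping concentrated in one (admittedly lengthy but routine) step. A further merit of your write-up is that it makes explicit the bijection the paper leaves implicit: you verify that un-contracting the paths turns a tree $T'$ on the $n'$ cells into a connected spanning $s$-uniform subhypergraph of $[n]$ with exactly $\frac{n-1}{s-1}$ edges, hence a tree by the minimum-edge characterisation, and you correctly observe that degenerate cases (e.g.\ $r=2$ with a $0$-point external irregular cell) are handled automatically since $(p_c)_{\delta_c'}$ vanishes.
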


\bigskip
Finally, Step~3 completes the subpartition $P_T\cup Q$ to a partition $P\in \cP_{n,r,s}$.

\begin{lemma}
\label{lem:s3}
Given $\Ivec\in\cI(\xvec)$,
suppose that $(P_1,\ldots, P_\ell)\in\mathcal{S}^\ast(\xvec)$ is fixed, and $P_T$ is
a fixed subpartition with $\iota(P_1,\ldots, P_\ell,P_T) = \Ivec$.
Let $u=u(\Ivec)$.
The number of ways to complete Step~3 is 
\begin{align*}
	s_3(\xvec,\Ivec)
 & \sim  \frac{\sqrt{s}\, (rs-r-s)\, n}{(s-1)\, ((s-1)!)^{\frac{1}{s-1}}} \,
\left(\frac{(s-1)^{s-1}\, (s-1)!}{(rs-r-s)^{s-1}\, n^{s-1}} \right)^u\,
  \left(\frac{(rs-r-s)\, n}{e (s-1)\,((s-1)!)^{\frac{1}{s-1}}} \right)^{\frac{(rs-r-s)}{s} n}. 
\end{align*}
\end{lemma}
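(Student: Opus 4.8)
The plan is to recognise that Step~3 merely distributes into parts of size $s$ whatever points remain unused after Steps~1 and~2, so $s_3(\xvec,\Ivec)$ equals $p(\cdot)$ evaluated at the number of leftover points. All the content is therefore (i) to determine that leftover count exactly and (ii) to feed it into Stirling's formula. A small surprise worth flagging in advance: the leftover count will depend on $(P_1,\dots,P_\ell,P_T)$ only through $u = u(\Ivec)$ and not on $\abs{Q}$, which is why the formula in the statement contains no $\abs{Q}$.

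First I would count the committed points. The cycles $G(P_1),\dots,G(P_\ell)$ are vertex-disjoint, so $Q = P_1\cup\cdots\cup P_\ell$ is a collection of $\abs{Q} = \sum_{j} j\,x_j$ pairwise disjoint parts. Since $G(P_T)\in\cT_n$, the subpartition $P_T$ has exactly $t = (n-1)/(s-1)$ parts. By the definition of the intersection sequences, a part of $P_i$ lies in $P_T$ precisely when its coordinate in $I_i$ is $1$, so the number of parts common to $Q$ and $P_T$ equals the total number of $1$-coordinates, namely $\abs{Q}-u$. Hence $Q\cup P_T$ is a set of $\abs{Q} + \bigl(t-(\abs{Q}-u)\bigr) = t+u$ pairwise disjoint parts (the parts of $P_T$ not already in $Q$ being built from points left unused by $Q$, as in the irregular model of Lemma~\ref{lem:s2}), and it commits $s(t+u) = st+su$ points. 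Since $s\mid rn$, the leftover count $rn-st-su$ is a multiple of $s$, nonnegative for large $n$ because $(r,s)\neq(2,2)$, so Step~3 can be completed in exactly $s_3(\xvec,\Ivec) = p(rn-st-su)$ ways.

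Next I would estimate $p(rn-st-su)$. Using $n=(s-1)t+1$ one computes $rn-st = \frac{(rs-r-s)n+s}{s-1} =: K$, so $K\sim (rs-r-s)n/(s-1)$ and $s_3(\xvec,\Ivec) = p(K-su)$ with $u = O(1)$ (as $\xvec$ is fixed). I would first peel off the $u$-dependence via the elementary identity $p(k-s)/p(k) = (k/s)\,s!/(k)_s$, which is $\sim (s-1)!/k^{s-1}$ as $k\to\infty$; iterating it $u$ times gives
\[
\frac{p(K-su)}{p(K)} \,\sim\, \left(\frac{(s-1)!}{K^{s-1}}\right)^{u} \,\sim\, \left(\frac{(s-1)^{s-1}\,(s-1)!}{(rs-r-s)^{s-1}\,n^{s-1}}\right)^{u},
\]
which is exactly the factor raised to the power $u$ in the statement. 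Applying Stirling's approximation to $p(K)$ supplies the rest; the one point requiring care is to retain the constant-order terms, in particular the factor $\bigl(1+\tfrac{s}{(rs-r-s)n}\bigr)^{(rs-r-s)n/s}\to e$ arising from the ``$+s$'' in the numerator of $K$, which cancels the $e^{-1}$ produced by writing $K/e$ in the polynomial prefactor. One obtains
\[
p(K) \,\sim\, \frac{\sqrt{s}\,(rs-r-s)\,n}{(s-1)\,((s-1)!)^{1/(s-1)}}\,
\left(\frac{(rs-r-s)\,n}{e\,(s-1)\,((s-1)!)^{1/(s-1)}}\right)^{(rs-r-s)n/s},
\]
and multiplying the two displays yields the lemma. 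As a consistency check, taking $\xvec=\boldsymbol{0}$ (so $u=0$) recovers the factor $p(rn-st)$ implicit in the proof of Lemma~\ref{lem:EY-hypergraph} and~\eqref{eq:EY}.

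I expect the only genuine obstacle to be the overlap bookkeeping in the second paragraph: the $\abs{Q}-u$ parts shared by $Q$ and $P_T$ must be subtracted exactly once, so that $Q\cup P_T$ consumes $st+su$ points rather than $st+s\abs{Q}$; an error here would corrupt both the exponent of $u$ and the overall normalisation. Once the leftover count $rn-st-su$ is secured, the remainder is a routine (if slightly delicate, because of the constant-order terms) application of Stirling's formula.
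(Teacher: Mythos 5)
Your argument is correct and takes essentially the same route as the paper: both determine that $rn - st - su$ points remain after Steps~1 and~2 (the paper records this equivalently as $\frac{(rs-r-s)n}{s-1} - s\bigl(u - \frac{1}{s-1}\bigr)$) and then evaluate $p$ of that count via Stirling's formula. Your overlap bookkeeping through the $t+u$ parts of $Q\cup P_T$ is a streamlined restatement of the paper's accounting in the irregular model, and the peel-off factor $p(K-su)/p(K)\sim\bigl((s-1)!/K^{s-1}\bigr)^u$ merely makes explicit a Stirling computation the paper dispatches in one line.
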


\begin{proof}
Out of the $rn$ points in the original configuration model, $2\abs{Q}$ points have been 
used for the external vertices in $Q = P_1\cup\cdots\cup P_\ell$ and 
$(s-2)\abs{Q}$ points have been used for the internal vertices of $Q$.  Finally,
$\frac{s(n'-1)}{s-1}$ points have been used to complete the subpartition $P_T$. 
So there are 
\[ rn - 2\abs{Q} - (s-2)\abs{Q} - \frac{s(n'-1)}{s-1} = 
  \frac{(rs-r-s)n}{s-1} - s\Big(u- \frac{1}{s-1}\Big)\]
points remaining. 
Hence, the number of ways to complete Step 3 is
	\begin{align}
	s_3(\xvec,\Ivec) &=  p\left(\frac{(rs-r-s)n}{s-1} - s\Big(u - \frac{1}{s-1}\Big)\right) \label{eq:s3-exact} 
	\end{align}
and applying Stirling's approximation completes the proof. 
\end{proof}

\bigskip

We use these expressions for $s_1$, $s_2$ and $s_3$ to prove the following result.

	\begin{lemma} \label{lemma:joint-moment}
Let $r,s \ge 2$ be fixed integers with $(r,s) \ne (2,2)$. 
For any fixed integer $m\geq 1$ and fixed sequence $(x_1,\dots,x_m)$ of non-negative integers,
		\[
		\frac{\E[Y(X_1)_{x_1} \dots (X_m)_{x_m}]}{\E Y} 
  \longrightarrow \prod_{j=1}^m \, (\lambda_j (1+ \zeta_j))^{x_j}
		\]
		as $n \to \infty$ along $\cN_{(r,s)}$, where for all $j\in\mathbb{Z}^+$,
		\[
\lambda_j = \frac{(r-1)^j(s-1)^j}{2j} \quad \text{and} \quad \zeta_j = \frac{\left(\frac{r}{r-1} - s +1 \right)^j -2}{(r-1)^j(s-1)^j}.
		\]
	\end{lemma}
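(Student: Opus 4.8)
The plan is to evaluate the right-hand side of \eqref{eq:joint-moment} by first restricting to vertex-disjoint cycles, i.e.\ by computing $\Sigma^\ast(\xvec)$, and then showing the non-disjoint contribution is negligible. For the disjoint part, I would combine the three already-established lemmas multiplicatively: for each fixed $\Ivec\in\cI(\xvec)$, the inner sum factors as $s_1(\xvec)\cdot s_2(\xvec,\Ivec)\cdot s_3(\xvec,\Ivec)$ by the Step~1--Step~2--Step~3 decomposition, so
\[
\abs{\Omega_{n,r,s}}\,\Sigma^\ast(\xvec) \sim \sum_{\Ivec\in\cI(\xvec)} s_1(\xvec)\, s_2(\xvec,\Ivec)\, s_3(\xvec,\Ivec).
\]
Multiplying the three asymptotic formulas, a large amount of cancellation should occur. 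In particular the factors $\bigl((r-1)r^{s-1}n^{s-1}/(s-2)!\bigr)^{\abs{Q}}$ from $s_1$ and its reciprocal from $s_2$ cancel exactly; the $u$-dependent prefactors from $s_2$ and $s_3$ should multiply to something clean; and the two ``$(\cdots)^n$'' and ``$(\cdots)^{(rs-r-s)n/s}$'' bulk factors from $s_2$ and $s_3$, together with $\abs{\Omega_{n,r,s}}$, should reproduce exactly $\abs{\Omega_{n,r,s}}\,\E Y$ as given in \eqref{for-future-reference}. After dividing through by $\abs{\Omega_{n,r,s}}\,\E Y$, what remains is a sum over $\Ivec$ of a product over $i=1,\dots,\ell$ of purely combinatorial factors depending only on the intersection sequences $I_i$, namely something of the shape
\[
\frac{\Sigma^\ast(\xvec)}{\E Y} \sim \sum_{\Ivec\in\cI(\xvec)} \prod_{i=1}^{\ell} \Bigl(\text{const}\Bigr)^{u_i} \prod_{k\ge 0}\Bigl(k+\tfrac{r-2}{rs-r-s}\Bigr)^{q_k(I_i)}.
\]

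The crux is then a bookkeeping identity: the sum over $\Ivec$ factors as a product over the $\ell$ cycles (since $\cI(\xvec)=\prod_j \cI_j^{x_j}$ and the summand is multiplicative over the $I_i$), so it suffices to show that for each $j$,
\[
\sum_{I\in\cI_j} (\text{const})^{U(I)}\prod_{k\ge0}\Bigl(k+\tfrac{r-2}{rs-r-s}\Bigr)^{q_k(I)} \;=\; \lambda_j(1+\zeta_j)
\]
with $\lambda_j(1+\zeta_j) = \bigl(\tfrac{r}{r-1}-s+1\bigr)^j - 2$ divided by $2j$ — wait, more precisely $\lambda_j(1+\zeta_j) = \tfrac{1}{2j}\bigl((\tfrac{r}{r-1}-s+1)^j-2\bigr)\cdot$ (the appropriate power), matching the claimed $\lambda_j(1+\zeta_j)$. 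I would prove this by interpreting the sum over $I\in\cI_j=\{0,1\}^j\setminus\{(1,\dots,1)\}$ as a transfer-matrix / necklace computation: a $0/1$ sequence around a cycle of length $j$ decomposes into maximal runs of $1$'s (the paths, a run of length $k$ contributing $q_k$) separated by $0$'s, and the generating-function bookkeeping of "runs of ones of length $k$ weighted by $(k+\frac{r-2}{rs-r-s})$, zeros weighted by the const" around a cycle of length $j$ gives a trace of a $2\times2$ matrix power, i.e.\ a sum of two $j$-th powers; subtracting the excluded all-ones term $(1,\dots,1)$ accounts for the $-2$. Dividing by $2j$ (direction and rotation, already extracted in $s_1$) yields $\lambda_j(1+\zeta_j)$.

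Finally I would dispatch the error term: show $\E[Y(X_1)_{x_1}\cdots(X_m)_{x_m}] - \Sigma^\ast(\xvec) = o(\E Y)$ by observing that if two of the cycles $G(P_1),\dots,G(P_\ell)$ share a vertex then the count loses a factor $\Theta(n^{-1})$ relative to the disjoint case (one fewer free vertex), while the number of such degenerate configurations is only a constant-factor larger, so the total non-disjoint contribution is $O(n^{-1})$ times $\Sigma^\ast(\xvec)$; this is the same mechanism used in Lemma~\ref{lem:s1} when iterating to $\ell>1$. The main obstacle I anticipate is not any single step but the sheer bulk of the cancellation in the $s_1 s_2 s_3$ product against $\abs{\Omega_{n,r,s}}\E Y$ — getting every power of $n$, every factorial-derived constant, and every $u$- and $\abs{Q}$-exponent to cancel correctly — together with correctly setting up the transfer-matrix identity so that the excluded all-ones sequence produces precisely the ``$-2$'' in $\zeta_j$. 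Everything else is routine once those two computations are in hand.
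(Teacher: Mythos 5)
Your overall plan — compute $\Sigma^\ast(\xvec)$ by multiplying the $s_1 s_2 s_3$ asymptotics against \eqref{for-future-reference}, extract a per-cycle combinatorial sum over intersection sequences, evaluate that sum in closed form, and bound the non-vertex-disjoint contribution by the same $n^{-1}$ loss argument used in the iteration step of Lemma~\ref{lem:s1} — is precisely the structure of the paper's proof. Where you diverge is in the crucial per-cycle identity, and there your proposal has a gap that would lead to the wrong constant.

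First, a $2\times 2$ transfer matrix with fixed entries cannot produce the weight $k+\tfrac{r-2}{rs-r-s}$ for a maximal run of $k$ ones: a constant matrix assigns a run of length $k$ a weight that is geometric in $k$, not affine. So ``$\operatorname{tr}(M^j)$ = sum of two $j$th powers'' does not come for free; you would need either an enlarged state space or a differentiation trick, which is effectively what the paper does by working with the bivariate generating function $F(x,y) = f(x)y/(1-f(x)y)$ where $f(x)=\mu\bigl(\tfrac{x^2}{(1-x)^2}+\tfrac{\beta x}{1-x}\bigr)$.

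Second, and more seriously, the $-2$ in $\lambda_j(1+\zeta_j) = \tfrac{1}{2j}\bigl((\tfrac{r}{r-1}-s+1)^j + (r-1)^j(s-1)^j - 2\bigr)$ does not arise from excluding the all-ones sequence. The all-ones sequence has $U(I)=0$ and no paths, so if you re-admitted it to $\cI_j$ it would contribute weight exactly $1$, not $2$. Under your scheme you would therefore obtain $a^j+b^j-1$ (divided by $2j$), which is wrong. In the paper's derivation the $-2$ comes from the factor $(1-x)^2$ in $1-f(x)=(1-ax)(1-bx)/(1-x)^2$, i.e.\ from the structure of the block generating function, and surfaces as $-2\cdot 1^j$ after taking $-[x^j]\log(1-f(x))$. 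So the correct answer is a sum of $j$th powers of \emph{three} roots ($a$, $b$, and $1$ with multiplicity two), not two-minus-all-ones. Since you explicitly flagged ``getting the excluded all-ones sequence to produce precisely the $-2$'' as one of the two steps your argument hinges on, and the proposed mechanism for it is incorrect, pushing the plan through as written would not reproduce $\lambda_j(1+\zeta_j)$. You would need to replace the necklace/transfer-matrix heuristic with the paper's approach: fix the last entry of $I$ to be zero (with the $j/\ell$ over-counting correction, giving $\xi_j = \tfrac12\sum_\ell c_{j,\ell}/\ell$), set up the recursion for $c_{j,\ell}$, solve it via $F(x,y)$, recognise $\tfrac12\sum_\ell f(x)^\ell/\ell = -\tfrac12\log(1-f(x))$, and factor $1-f(x)$.

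Everything else — the $s_1 s_2 s_3$ cancellation, the multiplicativity over cycles, and the $O(n^{-1})$ bound on the non-disjoint contribution — matches the paper and is fine as outlined.
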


\begin{proof}
Recall the definition of $\Sigma^*$ from (\ref{Sigma-star}).
By definition of $s_1(\xvec)$, $s_2(\xvec,\Ivec)$, $s_3(\xvec,\Ivec)$, we have
\[
 \Sigma^\ast(\xvec) = \sum_{\Ivec \in \cI(\xvec)} 
   \frac{s_1(\xvec) \, s_2(\xvec,\Ivec)\, s_3(\xvec,\Ivec)}{\abs{\Omega_{n,r,s}}\, \E Y}.
\]
Combining Lemmas~\ref{lem:s1}--\ref{lem:s3},
then dividing by (\ref{for-future-reference}) and cancelling leads to 
	\begin{align}
\Sigma^\ast(\xvec) 
&\sim \prod_{j=1}^m \frac{1}{(2j)^{x_j}} \sum_{\Ivec \in \cI(\xvec)} \left(\frac{(rs-r-s)^2}{r-1} \right)^{u} \,\,\, \prod_{k=0}^{m-1}  \left(k + \frac{r-2}{rs-r-s} \right)^{\qk{\Ivec}} \nonumber \\
	&= \prod_{j=1}^m \frac{1}{(2j)^{x_j}} \sum_{\Ivec \in \cI(\xvec)} \prod_{i=1}^\ell \left(\frac{(rs-r-s)^2}{r-1} \right)^{u_i} \,\,\, \prod_{k=0}^{m-1}  \left(k + \frac{r-2}{rs-r-s} \right)^{\qk{I_i}} \nonumber \\
	&= \prod_{j=1}^m \xi_j^{x_j},\label{eq:xis}
	\end{align}
where 
\[
\xi_j = \frac{1}{2j} \sum_{I \in \cI_j}\, \left(\frac{(rs-r-s)^2}{r-1} \right)^{U(I)} \,\,\,
  \prod_{k=0}^{j-1}  \left(k + \frac{r-2}{rs-r-s} \right)^{\qk{I}}.
\]
	We will compute this sum with the help of a generating function. Because $(1,\dots,1) \notin \cI_j$, we may identify a particular element in the sequence to be zero. By symmetry, we arbitrarily choose the last. Define the coefficients
	\[
	c_{j,\ell} = \sum_{\substack{I \in \cI_j: \\ U(I) = \ell \\ I_j = 0}} \mu^\ell \, \prod_{k=0}^{j-1} \left(k+\beta\right)^{\qk{I}}, 
	\]
	where we let
	\[
	\mu = \frac{(rs-r-s)^2}{r-1} \quad \text{and} \quad \beta = \frac{r-2}{rs-r-s}
	\]
	for convenience. Now, $c_{j,\ell}$ fixes the number of zeros in $I$ to be $\ell$, and assumes that $I_j = 0$ (that is, the last entry of $I$ is zero). 
Hence
	\begin{equation} \label{eq:EYX}
	\xi_j \sim \frac{1}{2j} \sum_{\ell=1}^j \frac{j\, c_{j,\ell}}{\ell} = \frac{1}{2} \sum_{\ell = 1}^j \frac{c_{j,\ell}}{\ell}.
	\end{equation}
	
	We now evaluate the coefficients $c_{j,\ell}$. 
Recall that $U(I)$ represents the number of zeros in the sequence $I$.
We have $c_{j,1} =\mu \left(j-1+\beta\right)$, because the only sequence with 1 zero and the last element zero is $(1,\dots,1, 0)$.
For $\ell \ge 2$, the sequence starts with $k$ ones followed by a zero, for
some $k\in \{0,\ldots, j -2\}$. Ranging over these possibilities gives
	\[
	c_{j,\ell} = \mu \sum_{k=0}^{j-2} \left(k + \beta\right)  c_{j-k-1,\ell-1}.
	\]
	To solve this, define the generating function
	\[
	F(x,y) = \sum_{j \ge 1} \sum_{ \ell \ge 1} c_{j,\ell}\, x^j y^\ell.
	\]
	By changing the order of summation and re-indexing, we have
	\begin{align*}
	F(x,y) - \sum_{j \ge 1} c_{j,1} x^j y &= \mu \sum_{j \ge 1} \sum_{\ell \ge 2} \sum_{k=0}^{j-2} \left(k + \beta\right) c_{j-k-1,\ell-1} \,x^j y^\ell \\
	&= \mu  \sum_{k \ge 0} \left(k + \beta\right) x^{k+1} y  \sum_{j \ge k+2} \sum_{\ell' \ge 1} c_{j-k-1,\ell'} \,x^{j-k-1} y^{\ell'} \\
	&= \mu  \sum_{k \ge 0} \left(k + \beta\right) x^{k+1} y \, F(x,y). 
	\end{align*}
	Thus, recalling that $c_{j,1} = \mu (j-1+\beta)$, we have
	\begin{align*}
	F(x,y) &= \mu \left[\sum_{j \ge 1} \left(j-1+\beta\right) x^j y +  \sum_{k \ge 0} \left(k + \beta\right) x^{k+1} y \, F(x,y)\right] \\
	&= \mu\,(F(x,y) + 1) \sum_{k \ge 0} \left(k + \beta\right) x^{k+1} y .
	\end{align*}
	Recall that by differentiating both sides of $(1-x)^{-1} = \sum_{k \ge 0} x^{k}$, we have
	\[
	\sum_{k\ge 0} k x^{k+1} = \frac{x^2}{(1-x)^2}.
	\]
	Hence if we define
	\begin{align*}
	f(x) = \mu \sum_{k \ge 0} \left(k + \beta\right) x^{k+1} = \mu\left(\frac{x^2}{(1-x)^2} + \frac{\beta x}{1-x}\right), 
	\end{align*}
	we have $F(x,y) = y \, f(x) (F(x,y) + 1)$ and thus
	\[
	F(x,y) = \frac{f(x)y}{1-f(x)y}.
	\]
	Now, going back to \eqref{eq:EYX}, we have
	\begin{align*}
	\xi_j &\sim \dfrac{1}{2} \sum_{\ell=1}^j \frac{c_{j,\ell}}{\ell} 
	= \dfrac{1}{2}[x^j]\, \sum_{\ell=1}^j \frac{1}{\ell}\, [y^{\ell-1}] \frac{f(x)}{1-f(x)y}.
	\end{align*}
	Applying the Taylor expansion of $(1-z)^{-1}$ and $\log(1-z)$, we have
	\[
	\xi_j \sim \dfrac{1}{2}[x^j] \sum_{\ell=1}^j \frac{1}{\ell} [y^{\ell-1}]  \left(f(x)\sum_{k=0}^\infty (f(x)y)^k \right) 
 = \dfrac{1}{2}[x^j] \sum_{\ell=1}^j \frac{f(x)^\ell}{\ell} 
= - \dfrac{1}{2}[x^j] \log(1-f(x)).
	\]
	Now,
	\[
	1 - f(x) = \frac{\left(1 - \left(\frac{r}{r-1} - s + 1\right) x\right) (1- (r-1)(s-1)x)}{(1-x)^2},
	\]
	so
	\begin{align*}
 	\xi_j &\sim \dfrac{1}{2} [x^j] \left(2\log(1-x) - \log\left(1 - \left(\frac{r}{r-1} - s + 1\right) x\right) - \log\left(1- (r-1)(s-1)x\right) \right) \\
	&= \dfrac{1}{2} [x^j] \sum_{k=1}^\infty \frac{-2x^k +  \left(\left(\frac{r}{r-1} - s + 1\right) x\right)^k + \left((r-1)(s-1)x \right)^k }{k} \\
	&= \frac{\left(\frac{r}{r-1} - s + 1 \right)^j + (r-1)^j (s-1)^j - 2}{2j} \\
	&= \lambda_j (1 + \zeta_j)
	\end{align*}
for $j=1,\ldots, m$.  Substituting this into (\ref{eq:xis}) implies that
\[ \Sigma^\ast(\xvec) \sim \prod_{j=1}^m \, (\lambda_j (1+ \zeta_j))^{x_j}.\]

To complete the proof, it remains to show that in (\ref{eq:joint-moment}), the sum over 
$\mathcal{S}(\xvec)\setminus \mathcal{S}^\ast(\xvec)$ is negligible. 
This is standard, but for completeness we sketch an argument.
We adapt Steps~1 to~3 as above.  There are $O(n^{(s-1)|Q|-1})$
ways to choose $(P_1,\ldots, P_{\ell})\in\mathcal{S}(\xvec)\setminus \mathcal{S}^\ast(\xvec)$,
as then $Q = P_1\cup \cdots\cup P_\ell$ involves at most $(s-1)|Q|-1$ distinct cells.
Now consider the number of ways to perform Steps~2 and~3, summed over all possibilities
for the intersection $Q\cap P_T$.  This is the number of ways to extend $Q$ to 
$Q\cup P_T$, where
$P_T$ corresponds to a spanning tree $T\in\cT_n$, and then extending $Q\cup P_T$
to a full partition.  This is very similar to the calculations performed
to evaluate $\E Y$, and the presence of $Q$ only changes these calculations by 
a constant factor.  Therefore, for a given $Q$, the total number of ways to perform
Steps~2 and~3, summed over all possible intersections, and then divided by $\E Y$,
is
\[ O(1)\,\, \frac{p(rn - s|Q|)}{p(rn)} = O(n^{-(s-1)|Q|}).\]
Multiplying this with the $O(n^{(s-1)|Q|-1})$  ways to complete Step~1,
we see that the sum over $(P_1,\ldots, P_{\ell})$ in (\ref{eq:joint-moment}), 
contributes $O(1/n) = o(1)$, as required.
\end{proof}

\bigskip

	We now show that condition (A3) holds, under fairly weak conditions on $(r,s)$.

	\begin{lemma} \label{lemma:a3}
	Fix integers $r,s\geq 2$ and $r$ such that 
\begin{equation} 
r\geq \begin{cases} 3 & \text{ if $s = 2$,}\\ 
                       2 &  \text{ if $s\in \{3,4\}$,}\\
                       s-1 &  \text{ if $s \geq  5$.}
 \end{cases}
\label{r-lower}
\end{equation}
Then 
	\[
		\exp\left(\sum_{j=1}^\infty \lambda_j \zeta_j^2 \right) = \frac{r^2 \sqrt{s-1}}{\sqrt{\left(r^2-r s+r+s-1\right) (r s-r-s)(r-1)}} < \infty.
		\]
	\end{lemma}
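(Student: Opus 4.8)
The plan is to evaluate the series $\sum_{j\ge1}\lambda_j\zeta_j^2$ in closed form and then simplify. For brevity write $a=\frac{r}{r-1}-s+1$ and $b=(r-1)(s-1)$. From the definitions of $\lambda_j$ and $\zeta_j$ one has $\lambda_j=b^j/(2j)$ and $\zeta_j=(a^j-2)/b^j$, so
\[
\lambda_j\zeta_j^2=\frac{(a^j-2)^2}{2j\,b^j}=\frac{1}{2j}\Bigl(\frac{a^2}{b}\Bigr)^{\!j}-\frac{2}{j}\Bigl(\frac{a}{b}\Bigr)^{\!j}+\frac{2}{j}\Bigl(\frac{1}{b}\Bigr)^{\!j}.
\]
The whole computation then reduces to recognising three logarithmic series.

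The first step is to check that the three ratios $a^2/b$, $a/b$ and $1/b$ all lie in $(-1,1)$, which makes every series below converge (and hence the sum finite); this is the only place hypothesis \eqref{r-lower} is used. Since $(r,s)\ne(2,2)$ and $r,s\ge2$ we have $b=(r-1)(s-1)\ge2$, so $0<1/b<1$. Using $a=1-\frac{rs-r-s}{r-1}$ and $(r-1)(s-1)=(rs-r-s)+1$, a direct computation gives
\[
b-a^2=\frac{(rs-r-s)\,\bigl(r^2-rs+r+s-1\bigr)}{(r-1)^2},
\]
where $rs-r-s=(r-1)(s-1)-1\ge1$ and $r^2-rs+r+s-1>0$ under \eqref{r-lower}: for $s=2$ it equals $r^2-r+1$; for $s\in\{3,4\}$ its discriminant in $r$ is negative; and for $s\ge5$ one writes $r^2-rs+r+s-1=r(r-s+1)+(s-1)\ge s-1>0$, using $r\ge s-1$. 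Hence $0\le a^2/b<1$, and then $a^2<b\le b^2$ forces $|a/b|<1$. \emph{This case analysis is the only genuine content of the proof;} everything else is routine algebra.

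Next, summing termwise with $\sum_{j\ge1}x^j/j=-\log(1-x)$,
\[
\sum_{j\ge1}\lambda_j\zeta_j^2=-\tfrac12\log\Bigl(1-\tfrac{a^2}{b}\Bigr)+2\log\Bigl(1-\tfrac{a}{b}\Bigr)-2\log\Bigl(1-\tfrac{1}{b}\Bigr)<\infty,
\]
so that $\exp\!\bigl(\sum_{j\ge1}\lambda_j\zeta_j^2\bigr)=(1-a/b)^2\bigl/\bigl[(1-1/b)^2\,(1-a^2/b)^{1/2}\bigr]$. It remains to simplify this. Using the same two identities I would compute
\[
1-\frac{a}{b}=\frac{r(rs-r-s)}{(r-1)^2(s-1)},\qquad 1-\frac{1}{b}=\frac{rs-r-s}{(r-1)(s-1)},\qquad 1-\frac{a^2}{b}=\frac{(rs-r-s)(r^2-rs+r+s-1)}{(r-1)^3(s-1)},
\]
substitute, and cancel: the powers of $(rs-r-s)$ and $(s-1)$ collapse, leaving exactly $\dfrac{r^2\sqrt{s-1}}{\sqrt{(r^2-rs+r+s-1)(rs-r-s)(r-1)}}$, as claimed.

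Thus the only obstacle is making the convergence criterion precise, i.e. showing \eqref{r-lower} forces $a^2<b$ (equivalently $r^2-rs+r+s-1>0$), which is exactly the condition needed so that the $(a^2/b)^j$ part of the series converges. Note for instance that $r^2-rs+r+s-1=(r-2)^2$ when $s=5$, so $r=2$ is genuinely excluded there, and $1/b=1$ when $(r,s)=(2,2)$; this explains the shape of \eqref{r-lower}.
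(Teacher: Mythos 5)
Your proposal is correct and follows essentially the same route as the paper: verify that the three ratios $a^2/b$, $a/b$, $1/b$ lie in $(-1,1)$, sum term by term using $\sum_{j\ge1}x^j/j=-\log(1-x)$, exponentiate, and simplify. The only substantive issue — showing $r^2-rs+r+s-1>0$ — you handle with the tidy identity $r^2-rs+r+s-1=r(r-s+1)+(s-1)$ for $s\ge5$, which is a slightly cleaner rearrangement than the paper's bound $r\ge s-1>\tfrac12(\sqrt{s^2-6s+5}+s-1)$, but the content is the same; both proofs are otherwise identical.
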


	\begin{proof}
		First, observe that
		\[ \frac{(\frac{r}{r-1}-s+1)^2}{(r-1)(s-1)},\qquad
		\frac{\frac{r}{r-1}-s+1}{(r-1)(s-1)},\qquad  \frac{1}{(r-1)(s-1)}\]
		are all less than~1 in absolute value. 
Next, we claim that
			\begin{equation} \label{eq:det-inequality}
			r^2-r s+r+s-1 >0. 
			\end{equation}
This condition is easily verified when $s\in \{2,3,4\}$ and $r$ belongs to the stated range.
When $s\geq 5$ we use the fact that
			\[
			r  \ge s-1 > \frac{1}{2} \left(\sqrt{s^2-6 s+5}+s-1\right),
			\]
			which implies \eqref{eq:det-inequality}.

Therefore, using the Taylor expansion of $-\log(1-z)$ we obtain
		\begin{align*}
		\sum_{j=1}^\infty \lambda_j \zeta_j^2 &= \dfrac{1}{2} \sum_{j=1}^\infty \frac{1}{j}\left( \left( \frac{\left(\frac{r}{r-1} - s +1 \right)^2}{(r-1)(s-1)} \right)^j - 4 \left( \frac{\frac{r}{r-1} - s + 1}{(r-1)(s-1)} \right)^j + 4 \left(\frac{1}{(r-1)(s-1)}\right)^j \right) \\
		&=-2 \log \left(1-\frac{1}{(r-1) (s-1)}\right)+2 \log \left(1-\frac{\frac{r}{r-1}-s+1}{(r-1) (s-1)}\right)\\
		& {} \hspace*{58mm}  -\dfrac{1}{2} \log \left(1-\frac{\left(\frac{r}{r-1}-s+1\right)^2}{(r-1) (s-1)}\right) \\
		&= -2 \log\left(\frac{rs-r-s}{(r-1)(s-1)} \right) + 2 \log\left(\frac{r(rs-r-s)}{(r-1)^2(s-1)} \right) \\
		& {} \hspace*{5cm} -\dfrac{1}{2} \log\left(\frac{(rs-r-s)(r^2-rs+r+s-1)}{(r-1)^3(s-1)} \right).
		\end{align*}
		Taking the exponential of both sides establishes the result.
	\end{proof}

Next, we investigate the parameters $\zeta_j(r,s)$.

	\begin{lemma} \label{lemma:zeta}
		Let $r,s \ge 2$ and  recall that for all fixed integers $j\geq 1$,  
		\[
		\zeta_j = \zeta_j(r,s) = \frac{\left(\frac{r}{r-1} - s +1 \right)^j -2}{(r-1)^j(s-1)^j}.
		\]
		Then 
		\begin{enumerate}
			\item[\emph{(i)}] $\zeta_j(2,2) = -1$ for $j \ge 1$;
			\item[\emph{(i)}] $\zeta_j(2,s) = -1$ for $s \ge 3$ and $j = 1$;
		\end{enumerate}
		In all other cases, $\zeta_j(r,s) > -1$.
	\end{lemma}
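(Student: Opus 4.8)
The plan is to abbreviate $a = \frac{r}{r-1}-s+1$ and $b = (r-1)(s-1)$, so that $\zeta_j = (a^j-2)/b^j$ and hence $\zeta_j > -1$ (resp.\ $\zeta_j=-1$) exactly when $a^j+b^j > 2$ (resp.\ $a^j+b^j=2$). Two elementary observations will drive everything: $b$ is a positive integer, and it equals $1$ if and only if $(r,s)=(2,2)$; and a one-line computation yields the identity $a+b = 1 + \tfrac{1}{r-1} + (s-1)(r-2)$.

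First I would dispose of the equality cases by direct substitution. For $(r,s)=(2,2)$ we have $a=b=1$, so $a^j+b^j=2$ for every $j$ and $\zeta_j(2,2)=-1$; this is~(i). For $r=2$ and any $s\ge 2$, the displayed identity reads $a+b=2$, so $\zeta_1(2,s)=-1$; specialising to $s\ge 3$ gives~(ii).

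For all remaining pairs and indices I would show $a^j+b^j>2$. Suppose first $j\ge 2$ and $(r,s)\ne(2,2)$, so $b\ge 2$. If $j$ is even then $a^j=|a|^j\ge 0$, whence $a^j+b^j\ge b^j\ge 4$. If $j$ is odd (so $j\ge 3$) and $a\ge 0$, then likewise $a^j+b^j\ge b^j\ge 8$. If $j$ is odd and $a<0$, then $|a| = (s-1)-\tfrac{r}{r-1}$ and $b-|a| = (s-1)(r-2)+\tfrac{r}{r-1}$, which is $\ge 2$ (the subcases $r=2$ and $r\ge 3$ being checked separately); then from the identity $b^j-|a|^j = (b-|a|)\sum_{i=0}^{j-1}b^{j-1-i}|a|^i \ge (b-|a|)\,b^{j-1}$ we get $a^j+b^j = b^j-|a|^j \ge 2\,b^{j-1}\ge 8$. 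Thus $\zeta_j>-1$ for all $j\ge 2$ outside $(2,2)$. For $j=1$ and $r\ge 3$, the identity gives $a+b = 1 + \tfrac{1}{r-1} + (s-1)(r-2) > 2$ since $(s-1)(r-2)\ge 1$; hence $\zeta_1>-1$. Assembling the cases, $\zeta_j(r,s)=-1$ precisely when $r=2$ and ($s=2$ or $j=1$), i.e.\ exactly in~(i) and~(ii), and $\zeta_j(r,s)>-1$ in all other cases.

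There is no genuine obstacle here; the lemma is a careful sign-and-parity case analysis. The step most prone to slips is the odd-$j$ subcase with $a<0$, where one needs the quantitative bound $b-|a|\ge 2$ (not merely $b>|a|$) to push $b^j-|a|^j$ above $2$, together with the recognition that for $j=1$ the threshold coincides exactly with $r=2$ via the identity for $a+b$. As a consistency check, one may also note via Lemma~\ref{lemma:joint-moment} (taking $\xvec$ supported on the single coordinate $j$) that $\lambda_j(1+\zeta_j) = \lim_{n\to\infty}\E[Y X_j]/\E Y\ge 0$, which already gives $\zeta_j\ge -1$ for free and reduces the remaining work to ruling out $a^j+b^j=2$.
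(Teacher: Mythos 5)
Your proof is correct and complete. Where the paper proves the strict inequality $\zeta_j>-1$ by branching on the parameters $(r,s)$ (first $r=2$, then $s=2$, then $r,s\geq 3$, each handled by a short calculation with the auxiliary function $f(r,s,j)=a^j+b^j-2$), you branch instead on the parity of $j$ and the sign of $a = \frac{r}{r-1}-s+1$, and you let the linear identity $a+b = 1+\frac{1}{r-1}+(s-1)(r-2)$ do the work at $j=1$. That identity immediately explains why $r=2$ is the only degenerate line when $j=1$, whereas the paper simply asserts that (i) and (ii) are easy to check. Your treatment of the odd-$j$, $a<0$ subcase via the factorisation $b^j-|a|^j = (b-|a|)\sum_{i=0}^{j-1} b^{j-1-i}|a|^i$ together with the quantitative bound $b-|a|\geq 2$ is a clean alternative to the paper's chain $f > \bigl((r-1)^j-1\bigr)(s-1)^j - 2\geq 2^j-2\geq 0$ in the $r,s\geq 3$ case. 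Both routes are elementary case analyses of the same inequality $a^j+b^j>2$; yours is slightly more uniform in that a single identity governs the $j=1$ threshold across all $(r,s)$, while the paper's parameter-driven split keeps each sub-calculation a bit shorter. Your closing observation that Lemma~\ref{lemma:joint-moment} already forces $\zeta_j\geq -1$ for free, since $\lambda_j(1+\zeta_j)$ is the limit of a ratio of non-negative quantities, is a nice structural sanity check that the paper's proof does not mention.
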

	\begin{proof}
		It is easy to check that (i) and (ii) hold. 
For (iii), note that $\zeta_j(r,s) > -1$ if and only if $f(r,s,j)>0$, where
		\[
		f(r,s,j) = \left( \frac{r}{r-1} - s + 1\right)^j - 2 + (r-1)^j (s-1)^j.
		\]
		If $r=2$ and  $s \ge 3$ then $f(2,s,j) = (s-1)^j -(s-3)^j - 2$, 
which increases with $j$. So 
		\[
		f(2,s,j) \ge f(2,s,2) = 4s - 10 > 0
		\]	
		as $s \ge 3$. If $s =2$, then $\left( \frac{r}{r-1} - s + 1\right)^j > 0$, so $f(r,2,j) >  (r-1)^j -2 \ge 0$ as $r \ge 3$ and $j\geq 1$.
		
\medskip
		It remains to show that $f(r,s,j) > 0$ when $r \ge 3$, $s \ge 3$ and $j \ge 1$. 
If $j$ is even then $\left( \frac{r}{r-1} - s + 1\right)^j \ge 0$, so
		\[
		f(r,s,j) \ge (r-1)^j (s-1)^j - 2 > 2^{2j} - 2 > 0.
		\]
		Now suppose that $j$ is odd.  Since $r \ge 3$ and $s \ge 3$, 
we have $\frac{r}{r-1} -s +1 < 0$. Thus
		\begin{align*}
		f(r,s,j) &= (r-1)^j (s-1)^j - \left(s-1-\frac{r}{r-1}\right)^j - 2 \\
		&> \left((r-1)^j - 1 \right)(s-1)^j - 2 \\
		&\ge 2^j - 2 \ge 0.
  	\end{align*}
This completes the proof.
	\end{proof}

	\section{Second moment} \label{chap:2ndmoment}
	
	So far, conditions (A1)--(A3) of Theorem \ref{thm:subgraph} have been verified. 
	In this section we will assume that $r,s\geq 2$ are fixed integers such that
$r > \rho(s)$ when $s\geq 5$, and that $(r,s)\neq (2,2)$ when $s\in \{ 2, 3, 4\}$.
With this assumption, we will 
obtain an asymptotic expression for the second moment, verifying condition (A4).
The following identity of Chu~\cite{chu1986extension}, which generalises Jensen's identity,
will be useful.
	
	\begin{lemma}[\cite{chu1986extension}] \label{lemma:jensen}
		Let $m$, $b$ be positive integers, $x_1,\ldots, x_b$ and $z$ be complex numbers, and define $\binom{-1}{0} = 1$. Then
		\[
		\sum_{\substack{(k_1,\ldots, k_b)\in\mathbb{N}^b\\k_1 + \dots + k_b = m}} \, \prod_{i=1}^b \,
		\binom{x_i+k_iz}{k_i} = \sum_{k=0}^m \binom{k+b-2}{k} \binom{x_1 + \dots + x_b + mz - k}{m-k} \,z^k.
		\]
	\end{lemma}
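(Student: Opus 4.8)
The plan is to prove the identity by the generating-function / Lagrange inversion method. Both sides are polynomials of bounded degree in $x_1,\dots,x_b$, so it would suffice to verify the identity for all large positive integers, but the cleanest route is to argue directly with formal power series. Fix $z$, introduce an auxiliary variable $t$, and let $w=w(t)$ be the unique formal power series with $w(0)=0$ satisfying the functional equation $w=t(1+w)^z$.

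The first step is to establish the single-variable generating function
\[
\sum_{n\ge 0}\binom{x+nz}{n}\,t^n=\frac{(1+w)^{x+1}}{1+w(1-z)}.
\]
This follows from the Lagrange--B\"urmann formula in the form $[t^n]\,H(w)/(1-t\phi'(w))=[w^n]\,H(w)\phi(w)^n$ applied with $\phi(w)=(1+w)^z$ and $H(w)=(1+w)^x$, using $t=w(1+w)^{-z}$ to simplify $1-t\phi'(w)=\bigl(1+w(1-z)\bigr)/(1+w)$ and $[w^n](1+w)^{x+nz}=\binom{x+nz}{n}$. Multiplying $b$ copies of this identity, one for each $x_i$, and writing $X=x_1+\dots+x_b$, I find that the left-hand side of the lemma has generating function $(1+w)^{X+b}/\bigl(1+w(1-z)\bigr)^b$. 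For the right-hand side I would reindex by setting $n=m-k$, apply the single-variable formula above with $x$ replaced by $X-k(1-z)$, then sum over $k$ using $\sum_{k\ge 0}\binom{k+b-2}{k}u^k=(1-u)^{-(b-1)}$; here the functional equation gives $zt/(1+w)^{1-z}=zw/(1+w)$, hence $1-zt/(1+w)^{1-z}=\bigl(1+w(1-z)\bigr)/(1+w)$, and after simplification the right-hand side also has generating function $(1+w)^{X+b}/\bigl(1+w(1-z)\bigr)^b$. Equating coefficients of $t^m$ finishes the proof.

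The main obstacle I expect is the bookkeeping in the right-hand side computation: correctly interchanging the summations, recognising the inner sum (over $m$, equivalently over $n=m-k$) as a shifted instance of the single-variable generating function, and carrying out the algebraic simplification via the functional equation so that the two generating functions visibly coincide. A secondary point is the degenerate convention $\binom{-1}{0}=1$, relevant when $b=1$, where $(1-u)^{-(b-1)}=1$ matches $\sum_k\binom{k-1}{k}u^k$ since $\binom{k-1}{k}=[k=0]$, together with the (routine) justification that manipulating formal power series in $z$ is legitimate. An alternative, slightly messier route is induction on $b$: split off the summation variable $k_b$, apply the case $b-1$ to the inner $(b-1)$-fold sum, recognise the resulting convolution in $k_b$ as the $b=2$ case (the classical Jensen identity), and collapse the powers of $z$ using the hockey-stick identity $\sum_{j=0}^{k}\binom{j+b-3}{j}=\binom{k+b-2}{k}$.
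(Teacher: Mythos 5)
The paper does not actually prove this lemma; it is stated with a citation to Chu (1986) and used as a black box. Your generating-function proof is therefore a self-contained alternative, and it is correct. The key identity
\[
\sum_{n\ge 0}\binom{x+nz}{n}\,t^n=\frac{(1+w)^{x+1}}{1+w(1-z)},\qquad w=t(1+w)^z,
\]
is a standard consequence of the Lagrange--B\"urmann formula in the form $[t^n]\,H(w)/(1-t\phi'(w))=[w^n]\,H(w)\phi(w)^n$, and your simplification $1-t\phi'(w)=(1+w(1-z))/(1+w)$ is right. Taking a $b$-fold product gives $(1+w)^{X+b}/(1+w(1-z))^b$ for the left side with $X=\sum x_i$. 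For the right side, after the reindex $n=m-k$, the inner sum is indeed the same single-variable generating function with $x$ replaced by $X-k(1-z)$, and the outer sum telescopes via $\sum_k\binom{k+b-2}{k}u^k=(1-u)^{1-b}$ with $u=zw/(1+w)$, using the functional equation exactly as you indicate; this again yields $(1+w)^{X+b}/(1+w(1-z))^b$. The boundary convention $\binom{-1}{0}=1$ is correctly handled by $(1-u)^0=1$ when $b=1$. What your route buys, compared with citing Chu, is a transparent proof that also makes visible \emph{why} the coefficient $\binom{k+b-2}{k}$ appears (it is the negative binomial series arising from the $b$-fold product), and it naturally extends to variants with $\binom{x_i+k_iz}{k_i}$ replaced by Abel-type terms. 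Your sketched inductive alternative via the classical two-variable Jensen identity would also work, though, as you note, the bookkeeping with the hockey-stick collapse is heavier.
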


	We write
	\[
	\abs{\Omega_{n,r,s}} \E Y^2 = \sum_{(P_{T_1},P_{T_2})}  \abs{\{ P \in \Omega_{n,r,s} : P_{T_1} \cup P_{T_2} \subseteq P \}}
	\]
	where the sum is over all pairs $(P_{T_1}, P_{T_2})$ such that $G(P_{T_1}) = T_2$ and $G(P_{T_2}) = T_2$, for some spanning trees $T_1, T_2$. 
	
	We perform this count by conditioning on the intersection between $P_{T_1}$ and $P_{T_2}$, which will correspond to a union of disjoint trees. 
Let $b \in \left\{1 + (s-1)\ell: \ell = 0, 1, \dots, \frac{n-1}{s-1}  \right\}$ be the number of connected components in this intersection (we can show $b$ must be of this form by adding up the number of vertices in each connected component). We break up the process into the following steps:
	\begin{enumerate}
		\item Choose a partition $\nu = (\nu_1, \dots, \nu_b)$ of $n$, where $\nu_i > 0$, $s-1 \mid \nu_i - 1$ and $\sum_{i =1}^b \nu_i = n$. Here, $\nu_i$ represents the number of vertices in the $i$'th connected component. (Later, we will divide by $b!$ to account for the assumption that the connected components are labelled).
		\item Choose a partition of the $n$ vertices into $b$ groups, where the size of the $i$'th group is $\nu_i$.
		\item In each group, choose a spanning tree on that group and a subpartition that projects to that tree. 
	\end{enumerate}
	We then collapse the unused points in each group to an irregular cell. The $i$'th irregular cell will have $r \nu_i - \frac{s(\nu_i - 1)}{s-1} = \frac{rs-r-s}{s-1}\nu_i + \frac{s}{s-1} $ points. In this irregular configuration model, we wish to partition two part-disjoint spanning trees $T_1'$ and $T_2'$, which will extend to $T_1$ and $T_2$.
	\begin{enumerate} \setcounter{enumi}{3}
		\item Choose $\boldsymbol{\delta}^{(1)}, \boldsymbol{\delta}^{(2)} \in \N^{b}$, the degree sequence of $T_1'$ and $T_2'$ respectively, such that, for all $i$,
		\[
		\delta_i^{(1)}, \delta_i^{(2)} \ge 1, \quad \sum_{i =1}^b \delta_i^{(1)} = \sum_{i =1}^b \delta_i^{(2)} = \frac{s(b-1)}{s-1}, \quad \text{and} \quad \delta_i^{(1)} + \delta_i^{(2)} \le \frac{rs-r-s}{s-1}\nu_i + \frac{s}{s-1}.
		\]
		\item Choose trees $T_1'$, $T_2'$ consistent with $\boldsymbol{\delta}^{(1)}$ and $\boldsymbol{\delta}^{(2)}$.
		\item Choose $P_{T_1}$ and $P_{T_2}$ such that there are no parts in common.
		\item Partition remaining points.
	\end{enumerate}
	Then $\abs{\Omega_{n,r,s}} \E Y^2$ is equal to the number of ways to complete the above process, summed over all $b$.
	
Let
	\[
	\mathcal{S}_1 (b) = \left\{\nu \in \left\{1 + (s-1)\ell: \ell =0, 1, \dots, \frac{n-1}{s-1} \right\}^b : \sum_{i =1}^b \nu_i = n\right\}
	\]
	be the set of possible sequences $\nu$ from Step 1. 
	The number of ways to complete Step 2 is
	\[
	s_2  = \binom{n}{\nu_1, \dots, \nu_b} = n! \prod_{i=1}^b \frac{1}{\nu_i!}.
	\]
	By \eqref{number-of-tree-subpartitions}, the number of ways to complete Step 3 is
	\[
	s_3 = \frac{(s-1)^b \,r^n}{((s-1)!)^{\frac{n-b}{s-1}}} \, \prod_{i=1}^b (\nu_i -2)! \binom{(r-1)\nu_i}{\frac{\nu_i - s}{s-1}} = \frac{(s-1)^b\, r^n}{((s-1)!)^{\frac{n-b}{s-1}}} \, \prod_{i=1}^b  \frac{(\nu_i -2)! ((r-1)\nu_i)!}{\left(\frac{\nu_i-s}{s-1} \right)! \left(\frac{rs-r-s}{s-1} \nu_i + \frac{s}{s-1} \right)!}.
	\]
Let 
	\begin{align*}
	\mathcal{S}_4 (\nu) = \bigg\{ (\boldsymbol{\eta}^{(1)},\boldsymbol{\eta}^{(2)},\boldsymbol{\eta}^{(3)}) \in (\N^b)^3: \quad &\eta_i^{(1)} + \eta_i^{(2)} + \eta_i^{(3)} = \frac{rs-r-s}{s-1} \nu_i - \frac{s-2}{s-1}, \\
	&\sum_{i =1}^b \eta_i^{(1)} = \sum_{i =1}^b \eta_i^{(2)} = \frac{b-s}{s-1} \bigg\}
	\end{align*}
	be the set of sequences arising from Step 4.
	By (\ref{degree-sequence-hypertree}), the number of ways to complete Step 5 is
	\[
	s_5 = \binom{\frac{b-1}{s-1} - 1}{\delta_1^{(1)} - 1, \dots, \delta_b^{(1)}-1} \binom{\frac{b-1}{s-1} - 1}{\delta_1^{(2)} - 1, \dots, \delta_b^{(2)}-1} \left( \frac{(b-1)!}{\left(\frac{b-1}{s-1}\right)! ((s-1)!)^{\frac{b-1}{s-1}}} \right)^2.
	\]
	As each cell in this irregular configuration model has $\frac{rs-r-s}{s-1}\nu_i + \frac{s}{s-1}$ points, the number of ways to complete Step 6 is
	\[
	s_6 = \prod_{i=1}^b \left( \frac{rs-r-s}{s-1}\nu_i + \frac{s}{s-1} \right)_{\delta_i^{(1)}+\delta_i^{(2)}} = \prod_{i =1}^b \frac{\left(\frac{rs-r-s}{s-1} \nu_i + \frac{s}{s-1} \right)!}{\left(\frac{rs-r-s}{s-1} \nu_i + \frac{s}{s-1} - \delta_i^{(1)} - \delta_i^{(2)} \right)!}.
	\]
	There are
	\[
	\sum_{i =1}^b r \nu_i - \frac{s(\nu_i -1)}{s-1} - \delta_i^{(1)} - \delta_i^{(2)} =\frac{(rs-r-s)n}{s-1} - \frac{s(b-2)}{s-1} 
	\]
	points remaining, so the number of ways to complete Step 7 is
	\[
	s_7 = p\left(\frac{(rs-r-s)n}{s-1} - \frac{s(b-2)}{s-1} \right) = \frac{\left( \frac{(rs-r-s)n}{s-1} - \frac{s(b-2)}{s-1}   \right)!}{\left( \frac{rs-r-s}{s(s-1)} n - \frac{b-2}{s-1} \right)! (s!)^{\frac{rs-r-s}{s(s-1)} n - \frac{b-2}{s-1}}}.
	\]
	
	It is convenient to work with nonnegative variables, so we let 
	\[
	\eta_i^{(1)} = \delta_i^{(1)} - 1, \quad \eta_i^{(2)} = \delta_i^{(2)} - 1, \quad \eta_i^{(3)} = \frac{rs-r-s}{s-1} \nu_i - \frac{s-2}{s-1} - \eta_i^{(1)} - \eta_i^{(2)},
	\]
	for $i = 1, \dots, b$. 

 Combining everything, and dividing by $b!$ as promised earlier, we have
	\begin{align*}
	\abs{\Omega_{n,r,s}} \E Y^2 &= \sum_{\substack{b=1 \\ s-1 \mid b-1}}^n \frac{1}{b!} \sum_{ \nu \in \mathcal{S}_1(b)} s_2 s_3 \sum_{(\boldsymbol{\eta}^{(1)}, \boldsymbol{\eta}^{(2)}, \boldsymbol{\eta}^{(3)}) \in \mathcal{S}_4(\nu)} s_5 s_6 s_7 \\
	&= \sum_{\substack{b=1 \\ s-1 \mid b-1}}^n \frac{n! \, r^n (s-1)^b ((b-1)!)^2}{b! ((s-1)!)^{\frac{n+b-2}{s-1}}\left( \frac{rs-r-s}{s(s-1)}n - \frac{b-2}{s-1} \right)! \left(\left(\frac{b-1}{s-1} \right)! \right)^2 (s!)^{\frac{rs-r-s}{s(s-1)}n - \frac{b-2}{s-1}} } \\
	& \quad \times \sum_{ \nu \in \mathcal{S}_1(b)} \left(\prod_{i=1}^b \frac{((r-1)\nu_i)!}{\nu_i (\nu_i -1)\left(\frac{\nu_i - s}{s-1} \right)!} \right) \\
	&\quad \times  \sum_{(\boldsymbol{\eta}^{(1)}, \boldsymbol{\eta}^{(2)}, \boldsymbol{\eta}^{(3)}) \in \mathcal{S}_4(\nu)} \binom{\frac{b-1}{s-1} - 1}{\eta_1^{(1)}, \dots, \eta_b^{(1)}} \binom{\frac{b-1}{s-1} - 1}{\eta_1^{(2)}, \dots, \eta_b^{(2)}} \binom{\frac{(rs-r-s)n}{s-1} - \frac{s(b-2)}{s-1}}{\eta_1^{(3)}, \dots, \eta_b^{(3)}}.
	\end{align*}
	Now we compute the sum over $\cS_4(\nu)$ through the use of generating functions:
	\begin{align*}
	&\phantom{=} \sum_{(\boldsymbol{\eta}^{(1)}, \boldsymbol{\eta}^{(2)}, \boldsymbol{\eta}^{(3)}) \in \mathcal{S}_4(\nu)} \binom{\frac{b-1}{s-1} - 1}{\eta_1^{(1)}, \dots, \eta_b^{(1)}} \binom{\frac{b-1}{s-1} - 1}{\eta_1^{(2)}, \dots, \eta_b^{(2)}} \binom{\frac{(rs-r-s)n}{s-1} - \frac{s(b-2)}{s-1}}{\eta_1^{(3)}, \dots, \eta_b^{(3)}} \\
	&= \sum_{(\boldsymbol{\eta}^{(1)}, \boldsymbol{\eta}^{(2)}, \boldsymbol{\eta}^{(3)}) \in \mathcal{S}_4(\nu)} 
\left( [z_1^{\eta_1^{(1)}} \cdots z_b^{\eta_b^{(1)}} ] \bigg(\sum_{i =1}^b z_i \bigg)^{\frac{b-s}{s-1}} \right)
\left( [z_1^{\eta_1^{(2)}} \cdots z_b^{\eta_b^{(2)}} ] \bigg(\sum_{i =1}^b z_i \bigg)^{\frac{b-s}{s-1}}\right) \\
	&\phantom{=\sum_{(\boldsymbol{\eta}^{(1)}, \boldsymbol{\eta}^{(2)}, \boldsymbol{\eta}^{(3)}) \in \mathcal{S}_4(\nu)}}\; \left( [z_1^{\eta_1^{(3)}} \cdots z_b^{\eta_b^{(3)}} ] \bigg(\sum_{i =1}^b z_i \bigg)^{\frac{(rs-r-s)n}{s-1} - \frac{s(b-2)}{s-1}}\right) \\
	&= [z_1^{\frac{rs-r-s}{s-1} \nu_1 - \frac{s-2}{s-1}} \cdots z_b^{\frac{rs-r-s}{s-1} \nu_b - \frac{s-2}{s-1}} ] \left(\sum_{i =1}^b z_i \right)^{\frac{(rs-r-s)n}{s-1} - \frac{b(s-2)}{s-1} } \\
	&= \binom{\frac{(rs-r-s)n}{s-1} - \frac{b(s-2)}{s-1}}{\frac{(rs-r-s)\nu_1}{s-1} - \frac{s-2}{s-1}, \dots, \frac{(rs-r-s)\nu_b}{s-1} - \frac{s-2}{s-1}}.
	\end{align*}
	So
	\begin{align*}
	\abs{\Omega_{n,r,s}} \E Y^2 &= \sum_{\substack{b=1 \\ s-1 \mid b-1}}^n \frac{n! \, r^n (r-1)^b ((b-1)!)^2\left(\frac{(rs-r-s)n}{s-1} - \frac{b(s-2)}{s-1} \right)!}{b! ((s-1)!)^{\frac{n+b-2}{s-1}}\left( \frac{rs-r-s}{s(s-1)}n - \frac{b-2}{s-1} \right)! \left(\left(\frac{b-1}{s-1} \right)! \right)^2 (s!)^{\frac{rs-r-s}{s(s-1)}n - \frac{b-2}{s-1}} } \\
	&\qquad \times \sum_{ \nu \in \mathcal{S}_1(b)} \prod_{i=1}^b \binom{(r-1)\nu_i - 1}{\frac{\nu_i -1}{s-1}} 
	\end{align*}
	Note that the summand is equal to $\abs{\Omega_{n,r,s}} \E Y$ when $b = 1$. 
	For $b \ge 2$, we let $k_i = \frac{\nu_i -1 }{s-1}$ and use Lemma \ref{lemma:jensen}, to see that
	\begin{align*}
	\sum_{ \nu \in \mathcal{S}_1(b)} \prod_{i=1}^b  \binom{(r-1)\nu_i - 1}{\frac{\nu_i -1}{s-1}} &= \sum_{\substack{k_1 + \dots + k_b = \frac{n-b}{s-1}\\k_i \ge 0 }} \prod_{i=1}^b \binom{(r-1)(s-1)k_i + r-2}{k_i} \\
	&= \sum_{k=0}^{\frac{n-b}{s-1}} \binom{k+b-2}{k} \binom{(r-1)n -b-k}{\frac{n-b}{s-1}-k} (r-1)^k.
	\end{align*}
	Define
	\begin{align}
	K &= \{(\alpha,\beta): \alpha, \beta \ge 0, \quad (s-1) \alpha + \beta \le 1 \}, \label{eq:K-hypergraphs} \\
	\mathcal{L} &=  \Z \times (s-1) \Z \label{eq:L-hypergraphs}
	\end{align}
	and let $K^{\circ}$ denote the interior of $K$.
	Thus, dividing through by the expression in \eqref{eq:partitions-hypergraph}, we have
	\begin{equation}
	\label{bigsum}
	\E Y^2 = \E Y + \sum_{(k,b) \in (\mathcal{L}+(0,1)) \cap nK} a_n(k,b),
	\end{equation}
	where 
	\begin{align*}
	a_n(k,b) &=\begin{cases}  0 & \text{for $b \le 1$,} \\
	\dfrac{ r^n (b-1)(r-1)^{k+b} \, s^{\frac{n+b-2}{s-1}} \, (k+b-2)! \, ((r-1)n-k-b)! \, (rn/s)! \, n!}{b \, k! \,  \left(\left(\frac{b-1}{s-1} \right)! \right)^2 \left( \frac{rs-r-s}{s(s-1)}n - \frac{b-2}{s-1} \right)! \,   \left(\frac{n-(s-1)k-b}{s-1} \right)! \, (rn)! } & \text{otherwise.} \end{cases}.
	\end{align*}
	We now wish to apply Laplace's method to compute the asymptotic summation of this expression.
	
	Greenhill, Janson and Ruci{\'n}ski \cite{randomlifts} proved a version of Laplace's method for asymptotic summation, tailored for the small subgraph conditioning method. 
	We refer to \cite{randomlifts} for precise definitions. 
	
	\begin{lemma}[\protect{\cite[Lemma 6.3]{randomlifts}}] \label{lemma:laplace}
		Suppose the following:
		\begin{enumerate}
			\item[\emph{(i)}] $\mathcal{L} \subset \R^m$ is a lattice with full rank $m$.
			\item[\emph{(ii)}] $K \subset \R^m$ is a compact convex set with non-empty interior.
			\item[\emph{(iii)}] $\varphi: K \to \R$ is a continuous function with a unique maximum at some interior point $x_0 \in K^\circ$.
			\item[\emph{(iv)}] $\varphi$ is a twice continuously differentiable in a neighbourhood of $x_0$ and the Hessian $H_0:=D^2 \varphi(x_0)$ is strictly negative definite.
			\item[\emph{(v)}] $\psi: K_1 \to \R$ is a continuous function on some neighbourhood $K_1 \subset K$ of $x_0$ with $\psi(x_0) > 0$.
			\item[\emph{(vi)}] For each positive integer $n$ there is a vector $\ell_n \in \R^m$.
			\item[\emph{(vii)}] For each positive integer $n$ there is a positive real number $b_n$ and a function $a_n: (\mathcal{L}+\ell_n) \cap nK \to \R$ such that, as $n \to \infty$,
			\begin{align}
			&a_n(\ell) = O(b_ne^{n\varphi(\ell/n) + o(n)}), & \ell \in (\mathcal{L}+\ell_n) \cap nK, \label{eq:lemma-1st-condition}
			\intertext{and} 
			&a_n(\ell) = b_n(\psi(\ell/n) + o(1))e^{n\varphi(\ell/n)}, & \ell \in (\mathcal{L}+\ell_n) \cap nK_1, \label{eq:lemma-2nd-condition}
			\end{align}
			uniformly for $\ell$ in the indicated sets.
		\end{enumerate}
		Then, as $n \to \infty$,
		\[
		\sum_{(\mathcal{L} + \ell_n) \cap nK} a_n(\ell) \sim \frac{(2\pi)^{m/2} \psi(x_0)}{\det(\mathcal{L}) \det(-H_0)^{1/2}}\, b_n n^{m/2} e^{n \varphi(x_0)}.
		\]
	\end{lemma}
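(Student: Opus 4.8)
I would prove Lemma~\ref{lemma:laplace} by the standard multivariate Laplace method for sums: localise the sum to a small ball around the maximiser $x_0$, show the remaining terms are negligible, replace $n\varphi(\ell/n)$ by its quadratic Taylor approximation at $x_0$, and recognise the resulting sum as a Riemann sum converging to a Gaussian integral over $\R^m$.

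\textbf{Far part.} Fix a small $\delta>0$. Since $\varphi$ is continuous on the compact set $K$ and attains its maximum \emph{only} at $x_0$, we have $\varphi(x)\leq\varphi(x_0)-\epsilon$ on $\{x\in K:|x-x_0|\geq\delta\}$ for some $\epsilon=\epsilon(\delta)>0$. By~\eqref{eq:lemma-1st-condition} each such term is $O\!\bigl(b_n e^{n\varphi(x_0)-\epsilon n+o(n)}\bigr)$ uniformly, and $K$ being bounded there are only $O(n^m)$ lattice points of $\mathcal{L}+\ell_n$ in $nK$; hence the contribution of $\{|\ell/n-x_0|\geq\delta\}$ is $O\!\bigl(b_n n^m e^{n\varphi(x_0)-\epsilon n/2}\bigr)=o\!\bigl(b_n n^{m/2}e^{n\varphi(x_0)}\bigr)$, well within the claimed error term.

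\textbf{Near part and the Gaussian sum.} Choose $\delta$ so small that $B(x_0,\delta)\subseteq K_1$. For $|\ell/n-x_0|<\delta$ put $\ell=nx_0+v$; as $x_0$ is an interior maximum with $\varphi\in C^2$ near $x_0$, Taylor's theorem gives $\varphi(x_0+h)=\varphi(x_0)+\tfrac{1}{2} h^\top H_0 h+R(h)$ with $R(h)=o(|h|^2)$, so $n\varphi(\ell/n)=n\varphi(x_0)+\tfrac{1}{2n}v^\top H_0 v+nR(v/n)$. Using~\eqref{eq:lemma-2nd-condition} and continuity of $\psi$ at $x_0$, the near part equals $b_n e^{n\varphi(x_0)}\bigl(\psi(x_0)+O(\omega(\delta))+o(1)\bigr)S_n$, where $\omega(\delta)\to0$ bounds the oscillation of $\psi$ on $B(x_0,\delta)$ and $S_n=\sum_v\exp\!\bigl(\tfrac{1}{2n}v^\top H_0 v+nR(v/n)\bigr)$, the sum over $v$ in the lattice coset $(\mathcal{L}+\ell_n-nx_0)\cap B(0,n\delta)$. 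Rescaling $v=\sqrt n\,w$ turns this into a Riemann sum over the fine lattice $\tfrac{1}{\sqrt n}\mathcal{L}$ (covolume $\det(\mathcal{L})\,n^{-m/2}$): since $H_0\prec0$ has eigenvalues bounded away from $0$ while $R(h)/|h|^2\to0$, for $\delta$ small the summand is dominated by a fixed integrable Gaussian and $nR(w/\sqrt n)\to0$ pointwise, so for each fixed such $\delta$,
\[
\frac{\det(\mathcal{L})}{n^{m/2}}\,S_n\;\longrightarrow\;\int_{\R^m}e^{\frac{1}{2} w^\top H_0 w}\,dw=\frac{(2\pi)^{m/2}}{\det(-H_0)^{1/2}}
\]
as $n\to\infty$ (the part of the Gaussian outside $B(0,\sqrt n\,\delta)$ is exponentially small). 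Combining this with the far-part estimate and finally letting $\delta\to0$ to remove the $O(\omega(\delta))$ term gives
\[
\sum_{(\mathcal{L}+\ell_n)\cap nK}a_n(\ell)\;\sim\;\frac{(2\pi)^{m/2}\psi(x_0)}{\det(\mathcal{L})\,\det(-H_0)^{1/2}}\,b_n\,n^{m/2}\,e^{n\varphi(x_0)}.
\]

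\textbf{Main obstacle.} The delicate point is the bookkeeping of limits: one fixes $\delta$ small enough that simultaneously $B(x_0,\delta)\subseteq K_1$, the Taylor remainder $nR(v/n)$ does not spoil the domination, and $\psi$ varies little; then sends $n\to\infty$ with $\delta$ fixed so the discretised Gaussian converges to $\int_{\R^m}e^{\frac{1}{2} w^\top H_0 w}\,dw$ and the truncation tail vanishes; and only afterwards lets $\delta\to0$. One must also invoke the stated uniformity of~\eqref{eq:lemma-1st-condition} and~\eqref{eq:lemma-2nd-condition} over the relevant growing index sets. The remaining ingredients — the compactness bound for the far part, the $C^2$ Taylor expansion, and the evaluation $\int_{\R^m}e^{\frac{1}{2} w^\top H_0 w}\,dw=(2\pi)^{m/2}\det(-H_0)^{-1/2}$ — are routine.
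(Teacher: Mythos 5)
The present paper merely quotes this lemma from Greenhill, Janson and Ruci{\'n}ski~\cite{randomlifts} without giving a proof, so there is no in-paper argument to compare against. Your reconstruction is the standard multivariate Laplace method for lattice sums---far-part exponential suppression via compactness and uniqueness of the maximum, interior second-order Taylor expansion (with no linear term since $x_0$ is critical), Riemann-sum convergence with the correct covolume normalisation $\det(\mathcal{L})/n^{m/2}$ to $\int_{\R^m}e^{\frac{1}{2}w^\top H_0 w}\,dw=(2\pi)^{m/2}\det(-H_0)^{-1/2}$, and the $\delta$-fixed, $n\to\infty$, then $\delta\to 0$ ordering---and it is correct, matching the approach of the cited source.
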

	
	To apply this lemma, we define
	\begin{align}
	b_n &= \frac{(s-1)^2}{2\pi n^3} \left(\frac{(s-1)^{r/s}}{r^{\frac{rs-r-s}{s}}} \right)^n, \nonumber \\
	\psi(\alpha,\beta)&= \frac{(r-1-\alpha-\beta)^{1/2}}{(\alpha+\beta)^{3/2} (rs-r-s(1+\beta))^{\frac{1}{2}+\frac{2}{s-1}} \beta^{1-\frac{2}{s-1}} \alpha^{1/2} (1-\beta-(s-1)\alpha)^{1/2}}, \nonumber \\
	\varphi(\alpha,\beta) &= (\alpha +\beta ) \log (r-1)+g(\alpha +\beta )+g(r-1-\alpha -\beta) -\frac{2}{s-1}\, g(\beta) - g(\alpha) \\
	&\qquad - \frac{1}{s(s-1)} \, g(r s-r-s-s\beta) - \frac{1}{s-1}\, g(1-(s-1)\alpha - \beta),
	\label{phi-def}
	\end{align}
	where $g(x) = x \log x$ for $x > 0$ and $g(0) = 0$. 
	The following result, proved in Section~\ref{s:global-max}, gives critical information about the function $\varphi$.
	
	\begin{lemma} \label{lem:global-max}
	Assume that $r,s\geq 2$ are fixed integers such that
$r > \rho(s)$ when $s\geq 5$, and that $(r,s)\neq (2,2)$ when $s\in \{ 2, 3, 4\}$.
The unique global maximum of $\varphi$ over $K$ 
		occurs at the point $(\alpha_0,\beta_0)$ where 
		\[
		\alpha_0 = \frac{1}{r(s-1)}, \quad \beta_0 = \frac{rs-r-s}{r(s-1)}.
		\]
		The maximum value of $\varphi$ over this domain equals
		\[
		\varphi(\alpha_0,\beta_0) = 2(r-1) \log(r-1) - \frac{2(rs-r-s)}{s(s-1)} \log(rs-r-s) + \frac{r}{s} \log(s-1) - \frac{rs-r-s}{s} \log r.
		\]
		Let $H_0$ be the Hessian of $\varphi$ evaluated at the point $(\alpha_0,\beta_0)$. Then $H_0$ is strictly negative definite and
		\[
		\det(-H_0) = \frac{r^3 (s-1)^2 \left(r^2-r s+r+s-1\right)}{(r-1)^2 (r s-r-s)}.
		\]
	\end{lemma}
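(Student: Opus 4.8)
The plan is to establish the three parts of the lemma in order: that $(\alpha_0,\beta_0)$ is a stationary point with the stated value, that it is the global maximiser of $\varphi$ over $K$, and the formula for $\det(-H_0)$; the optimality claim is where the real work lies.

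First I would compute the gradient. Since $g'(x)=\log x+1$, the additive constants cancel and
\[
\frac{\partial\varphi}{\partial\alpha}=\log\frac{(r-1)(\alpha+\beta)\bigl(1-(s-1)\alpha-\beta\bigr)}{(r-1-\alpha-\beta)\,\alpha},
\]
with an analogous expression for $\partial\varphi/\partial\beta$. To verify that $(\alpha_0,\beta_0)$ annihilates both, I would use the identities $\alpha_0+\beta_0=\tfrac{r-1}{r}$, $\;r-1-\alpha_0-\beta_0=\tfrac{(r-1)^2}{r}$, $\;1-(s-1)\alpha_0-\beta_0=\alpha_0$ and $\;rs-r-s-s\beta_0=(rs-r-s)\beta_0$; feeding these same identities into $\varphi$ itself gives the claimed value of $\varphi(\alpha_0,\beta_0)$. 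Note $(\alpha_0,\beta_0)\in K^\circ$ since $(s-1)\alpha_0+\beta_0=\tfrac{rs-r-1}{r(s-1)}<1$.

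The key obstacle is that $\varphi$ is \emph{not} concave on $K^\circ$, so one cannot simply argue via the Hessian that an interior critical point is the global maximum. Instead I would reduce the dimension. A direct computation gives
\[
\frac{\partial^2\varphi}{\partial\alpha^2}=\frac{1}{\alpha+\beta}+\frac{1}{r-1-\alpha-\beta}-\frac{1}{\alpha}-\frac{s-1}{1-(s-1)\alpha-\beta},
\]
and this is negative throughout $K^\circ$: indeed $\tfrac1{\alpha+\beta}<\tfrac1\alpha$, while $(s-1)(r-1-\alpha-\beta)>1-(s-1)\alpha-\beta$ reduces to $(s-1)(r-1)-1-(s-2)\beta>0$, which holds on $K^\circ$ whenever $(r,s)\neq(2,2)$. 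Hence for each admissible $\beta$ there is a unique $\alpha$-maximiser $\alpha^\ast(\beta)$, and $\Phi(\beta):=\varphi(\alpha^\ast(\beta),\beta)$ is a smooth one-variable function with $\Phi'(\beta)=(\partial\varphi/\partial\beta)(\alpha^\ast(\beta),\beta)$ by the envelope theorem. I would then show $(\alpha_0,\beta_0)$ is the unique interior critical point of $\varphi$ — equivalently, that $\Phi'$ has a single zero — by using $\partial\varphi/\partial\alpha=0$ to eliminate one variable in $\partial\varphi/\partial\beta=0$ and checking strict monotonicity of the resulting single-variable expression on its domain. Separately, I would examine the three edges of the triangle $K$: on $\{\beta=0\}$ the $g(\alpha)$ terms cancel and the unique edge critical point is the vertex $(0,0)$, so that is the edge maximum; the edges $\{\alpha=0\}$ and $\{(s-1)\alpha+\beta=1\}$ are handled by the same one-dimensional reduction and shown not to exceed $\varphi(0,0)$. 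Finally the comparison $\varphi(\alpha_0,\beta_0)>\varphi(0,0)$ unwinds to
\[
s(r-1)\log(r-1)+r\log(s-1)>(rs-r-s)\log r+\tfrac{rs-r-s}{s-1}\log(rs-r-s),
\]
which by \eqref{rho-def} is equivalent to $r>\rho(s)$ when $s\geq5$, and holds unconditionally for $s\in\{2,3,4\}$ with $(r,s)\neq(2,2)$ (for such $s$, \eqref{rho-def} has no root in $(2,\infty)$). Thus the maximum of the continuous function $\varphi$ on the compact set $K$ is attained at an interior critical point, which is unique, namely $(\alpha_0,\beta_0)$; this is the step I expect to be the most delicate, and the only one that uses the hypothesis $r>\rho(s)$.

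For the Hessian, I would substitute the identities above into the second-order partials to obtain $H_{11}=\tfrac{r^2}{(r-1)^2}-rs(s-1)$, $\;H_{12}=\tfrac{r^2}{(r-1)^2}-r(s-1)$ and $H_{22}=\tfrac{r^2}{(r-1)^2}-\tfrac{2r}{rs-r-s}-\tfrac{sr}{(rs-r-s)^2}-r$. Then $H_{11}<0$, and $\det(-H_0)=H_{11}H_{22}-H_{12}^2$ simplifies to the stated rational function after using $r^2-rs+r+s-1>0$ (which follows from the hypotheses exactly as in the proof of Lemma~\ref{lemma:a3}); positivity of this determinant together with $H_{11}<0$ shows that $H_0$ is strictly negative definite.
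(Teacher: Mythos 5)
Your broad strategy — verify $(\alpha_0,\beta_0)$ is a stationary point, reduce the two-variable optimisation to one variable along the curve of $\alpha$-critical points, compare with the boundary, and then establish negative definiteness of the Hessian — is the same shape as the paper's argument (the paper parameterises the curve $\varphi_\alpha=0$ as $(\alpha(x),\beta(x))$ for $x\in(-1,\infty)$ rather than by $\beta$ via the envelope theorem, but the curve is the same). However, there is a genuine gap at the step you yourself flag as most delicate.

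You claim that, after eliminating $\alpha$ via $\varphi_\alpha=0$, the resulting single-variable function $\Phi'(\beta)$ is strictly monotonic, which would force at most one interior stationary point. This is false for $s\ge6$. The paper's Lemma~\ref{lem2} only establishes that the reduced stationary-point equation~\eqref{b-eq} has at most \emph{two} solutions on $(-1,\infty)$ (one of which is $x=0$, corresponding to $(\alpha_0,\beta_0)$), and this is not a weakness of their analysis: writing $L(x)=(rs-r-s)\bigl(1+\tfrac{x}{r-1}\bigr)^{s-2}$ and $R(x)=(1+x)\bigl(rs-r-s+sx+\tfrac{x(x+1)}{r-1}\bigr)$ for the two sides of \eqref{b-eq}, one has $L-R$ vanishing at $0$, decreasing initially since $L'(0)<R'(0)$, but $L(x)\sim c\,x^{s-2}$ while $R(x)\sim x^3/(r-1)$ as $x\to\infty$, so for $s\ge6$ the difference $L-R$ tends to $+\infty$ and necessarily has a second zero. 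Equivalently $\Phi'$ has two zeros, so strict monotonicity fails, and your argument as stated does not establish uniqueness of the interior critical point. The paper closes this gap with a different argument: if the global maximiser lay at a stationary point with $x\ne0$, then since $x=0$ is also a local maximum of the restricted function $f(x)=\varphi(\alpha(x),\beta(x))$, Rolle would give a third stationary point of $f$ strictly between them, contradicting the ``at most two solutions'' bound. You would need to add something of this kind — a topological/Rolle-style argument exploiting that $(\alpha_0,\beta_0)$ is already a \emph{local} maximum together with the at-most-two-solutions bound — rather than attempt a monotonicity proof that cannot succeed.

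A secondary, smaller issue: on the edges you argue only that the sole \emph{tangential} critical point is the vertex $(0,0)$; that alone does not rule out a boundary global maximum at a non-critical point. The paper instead shows the inward normal derivative of $\varphi$ blows up to $+\infty$ near each open edge, so $\varphi$ strictly increases moving inward, which is what is really needed. Your computation of the Hessian entries, and the observation that $\det(-H_0)>0$ (using $r^2-rs+r+s-1>0$) together with a negative diagonal entry gives negative definiteness, is fine in principle, though the entries you wrote should be double-checked against the paper's trace formula.
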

	
	Then Lemma~\ref{lem:global-max} implies the following.
	
	\begin{lemma} \label{lemma:2nd-moment}
	Assume that $r,s\geq 2$ are fixed integers such that
$r > \rho(s)$ when $s\geq 5$, and that $(r,s)\neq (2,2)$ when $s\in \{ 2, 3, 4\}$.
		Then as $n \to \infty$ along $\cN_{(r,s)}$,
		\[
		\frac{\E Y^2}{(\E Y)^2} \sim \frac{r^2 \sqrt{s-1}}{\sqrt{\left(r^2-r s+r+s-1\right) (r s-r-s)(r-1)}}.
		\]
	\end{lemma}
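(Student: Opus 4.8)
The plan is to evaluate the sum in~\eqref{bigsum} by applying the Laplace-type summation result, Lemma~\ref{lemma:laplace}, with $m=2$, lattice $\mathcal{L}$ from~\eqref{eq:L-hypergraphs}, region $K$ from~\eqref{eq:K-hypergraphs}, shift vector $\ell_n=(0,1)$, and with $b_n$, $\psi$ and $\varphi$ as already defined. Several hypotheses are supplied directly by Lemma~\ref{lem:global-max}: hypothesis~(iii), that $\varphi$ has a unique global maximum over $K$ at the interior point $(\alpha_0,\beta_0)$, and hypothesis~(iv), that $\varphi$ is smooth there with $H_0$ strictly negative definite and of the stated determinant. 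Hypotheses~(i) and~(ii) are immediate, with $\det(\mathcal{L})=s-1$. For hypothesis~(v) it suffices to check that at $(\alpha_0,\beta_0)$ each quantity appearing in the denominator of $\psi$ --- namely $r-1-\alpha-\beta$, $rs-r-s(1+\beta)$, $\beta$, $\alpha$ and $1-\beta-(s-1)\alpha$ --- is strictly positive; using the identity $\alpha_0+\beta_0=\frac{r-1}{r}$ (which holds because $(r-1)(s-1)=rs-r-s+1$) together with the hypotheses on $(r,s)$, all five are positive, so $\psi$ is continuous and positive in a neighbourhood of $(\alpha_0,\beta_0)$. Finally, $\ell_n=(0,1)$ supplies hypothesis~(vi).

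The technical core is hypothesis~(vii): the uniform estimates~\eqref{eq:lemma-1st-condition} and~\eqref{eq:lemma-2nd-condition} for $a_n(k,b)$. Starting from the explicit formula in~\eqref{bigsum}, I would apply Stirling's approximation to every factorial, writing each in the form $\sqrt{2\pi}\,x^{x+1/2}e^{-x}(1+O(1/x))$, and then sort the resulting factors by order of magnitude: the exponential-in-$n$ factors combine to $e^{n\varphi(k/n,b/n)}$ --- this is precisely the identity that $\varphi$ in~\eqref{phi-def} was designed to encode --- the polynomial-in-$n$ factors combine to $\psi(k/n,b/n)$, and the $n$-independent constants combine to $b_n$. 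In a fixed neighbourhood $K_1$ of $(\alpha_0,\beta_0)$ every factorial argument grows linearly in $n$, so the relative error is $1+o(1)$ uniformly and~\eqref{eq:lemma-2nd-condition} holds; near the boundary of $K$ some arguments may be sublinear in $n$, but there Stirling still yields the weaker bound~\eqref{eq:lemma-1st-condition}, which carries exactly the $e^{o(n)}$ slack required. I expect this bookkeeping to be the main obstacle: one must track the half-integer exponents carefully and verify that the various $O(1)$ shifts in the factorial arguments (such as $b-2$, $(s-1)k+b$, and $\frac{rs-r-s}{s(s-1)}n-\frac{b-2}{s-1}$) affect only lower-order terms.

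Granting hypothesis~(vii), Lemma~\ref{lemma:laplace} with $m=2$ gives
\[
\sum_{(k,b)\in(\mathcal{L}+(0,1))\cap nK} a_n(k,b)
 \sim \frac{2\pi\,\psi(\alpha_0,\beta_0)}{(s-1)\,\det(-H_0)^{1/2}}\; b_n\, n\; e^{n\varphi(\alpha_0,\beta_0)}.
\]
Under the hypotheses of the lemma $\E Y\to\infty$ (this is the content of the threshold analysis in Section~\ref{s:threshold}), so the term $\E Y$ in~\eqref{bigsum} is $o\big((\E Y)^2\big)$, and hence $\E Y^2$ is asymptotic to the sum above. Dividing by $(\E Y)^2$ using Lemma~\ref{lem:EY-hypergraph}, and substituting the explicit values of $\varphi(\alpha_0,\beta_0)$, $\psi(\alpha_0,\beta_0)$ and $\det(-H_0)$ from Lemma~\ref{lem:global-max} together with the definition of $b_n$, one finds that all exponential-in-$n$ factors cancel exactly and the remaining algebra collapses to
\[
\frac{\E Y^2}{(\E Y)^2}
 \sim \frac{r^2\sqrt{s-1}}{\sqrt{(r^2-rs+r+s-1)(rs-r-s)(r-1)}},
\]
as required. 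As a consistency check, this is precisely the value of $\exp\big(\sum_{j\ge1}\lambda_j\zeta_j^2\big)$ computed in Lemma~\ref{lemma:a3}, which is exactly what condition~(A4) of Theorem~\ref{thm:subgraph} demands.
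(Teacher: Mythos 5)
Your proposal follows essentially the same route as the paper: verify the seven hypotheses of Lemma~\ref{lemma:laplace} (with (iii)--(iv) supplied by Lemma~\ref{lem:global-max}, (vii) by Stirling's approximation), apply it to the sum in~\eqref{bigsum}, observe that the $\E Y$ term is negligible since $\E Y\to\infty$ under the hypotheses, and divide by $(\E Y)^2$ via Lemma~\ref{lem:EY-hypergraph}. Your explicit positivity check for $\psi(\alpha_0,\beta_0)$ and the remark about the negligibility of the $b=1$ term are points the paper leaves implicit, but the argument is the same.
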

	
	\begin{proof}
		We apply Lemma \ref{lemma:laplace} to compute the sum in \eqref{bigsum}. The first six conditions of the lemma hold:
		Now the conditions of Lemma \ref{lemma:laplace} hold:
		\begin{enumerate}
			\item[(i)] We defined $\mathcal{L} = \Z \times (s-1)\Z$, a lattice with rank $m = 2$ and $\det(\mathcal{L}) = s-1$.
			\item[(ii)] The domain $K$, defined in \eqref{eq:K-hypergraphs}, is compact and convex with a non-empty interior.
			\item[(iii)] The function $\varphi: K \to \R$ is a continuous function with a unique global maximum $(\alpha_0,\beta_0)$,  by Lemma~\ref{lem:global-max} is true.
			\item[(iv)] The function $\varphi:K \to \R$ is twice differentiable in the interior of $K$, with a strictly negative definite Hessian, by Lemma~\ref{lem:global-max} is true.
			\item[(v)] Let $K_1$ be the open ball around $(\alpha_0,\beta_0)$ of sufficiently small radius, ensuring that $K_1 \subset K$. 
			The function $\psi: K_1 \to \R$ is a continuous function with 
			\[
			\psi(\alpha_0,\beta_0) = \frac{r^{7/2}(s-1)^{5/2}}{\sqrt{r-1} (rs-r-s)^{\frac{2s}{s-1}}} > 0.
			\]
			\item[(vi)] Let $\ell_n$ be $(0,1)$ for each $n$.
			\item[(vii)] This condition is verified by applying Stirling's approximation.
		\end{enumerate}
		Thus, we can apply Lemma \ref{lemma:laplace} to see that
		\begin{align*}
		\E Y^2 &\sim \frac{2\pi \, \psi(\alpha_0,\beta_0)}{\det(\mathcal{L}) \det(-H_0)^{1/2}} \,\, b_n \, n \, e^{n\ \varphi(\alpha_0,\beta_0)} \\
		&= \frac{r^2 \sqrt{r-1}\,  (s-1)^{5/2}\, (rs-r-s)^{\frac{1}{2} - \frac{2s}{s-1}}}{n^2 \, \sqrt{r^2 -rs+r+s-1}} \left(\frac{(s-1)^{r/s}\, (r-1)^{r-1}}{r^{\frac{rs-r-s}{s}} \, (rs-r-s)^{\frac{rs-r-s}{s(s-1)}}} \right)^{2n}.
		\end{align*}
Dividing by $(\E Y)^2$, using the expression from Lemma~\ref{lem:EY-hypergraph},
completes the proof.
	\end{proof}

	\section{Threshold analysis}\label{s:threshold}
	
	Define the logarithm of the base of the exponential factor as a function:
	\[
	L(r,s) = L_s (r) = \frac{r}{s} \log(s-1) + (r-1) \log(r-1) - \frac{rs-r-s}{s} \log r - \frac{rs-r-s}{s(s-1)} \log(rs-r-s),
	\]
	treating $r$ as a continuous variable and $s\geq 2$ as a fixed positive integer.
	We restrict to $r \ge 2$, or $r \ge 3$ when $s = 2$. 
	We want to determine when $L_s(r) > 0$ (which implies that $\E Y \to\infty$) and when $L_s(r) \leq 0$ 
	(which implies that $\E Y = o(1)$. 
	
	The following can be checked using elementary calculus.  
	
	\begin{lemma}
 For $s \in \{2,3,4\}$, $L_s(r) > 0$ for $r \in (2,\infty)$. Furthermore, for $s \in \{3,4\}$, $L_s(2) > 0$.
		\label{small-values}
	\end{lemma}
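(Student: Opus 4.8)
The plan is to treat $L_s$ as a smooth function of the real variable $r$ and show that, for $s\in\{2,3,4\}$, it is strictly increasing on $(2,\infty)$ and nonnegative at $r=2$. First I would differentiate. A direct computation, in which the terms linear in $r$ collapse, gives
\[
L_s'(r) = \log(r-1) - \frac{s-1}{s}\log r - \frac{1}{s}\log(rs-r-s) + \frac{1}{s}\log(s-1) + \frac{1}{r}
\]
and
\[
L_s''(r) = \frac{1}{r-1} - \frac{s-1}{sr} - \frac{s-1}{s(rs-r-s)} - \frac{1}{r^2}.
\]

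The crux is the sign of $L_s''$. For $r>2$ all of $r-1$, $r$ and $rs-r-s$ are positive, so we may clear denominators; after simplification one obtains the clean equivalence
\[
L_s''(r) < 0 \iff r^2 - s r + s > 0.
\]
The quadratic $r^2 - sr + s$ has discriminant $s(s-4)$, which is nonpositive precisely when $s\le 4$, so for $s\in\{2,3,4\}$ we get $L_s''(r)<0$ for all $r>2$ (the sole degeneracy being $s=4$, $r=2$, where it vanishes). Hence $L_s'$ is strictly decreasing on $(2,\infty)$. Regrouping as $L_s'(r) = \log\frac{r-1}{r} + \frac{1}{s}\log\frac{r(s-1)}{rs-r-s} + \frac{1}{r}$ makes it clear that $L_s'(r)\to 0$ as $r\to\infty$; a strictly decreasing function with limit $0$ is positive, so $L_s'(r)>0$ on $(2,\infty)$, and since $L_s$ is continuous on $[2,\infty)$ it follows that $L_s(r)>L_s(2)$ for every $r>2$.

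It then remains to evaluate the boundary value $L_s(2)$, using the convention that $\frac{rs-r-s}{s(s-1)}\log(rs-r-s)=0$ when $rs-r-s=0$. Substitution gives $L_2(2)=0$, $L_3(2)=\frac13\log 2>0$ and $L_4(2)=\frac12\log 3-\frac23\log 2>0$, the last being equivalent to $3\sqrt{3}>4$. Combined with the previous paragraph: $L_2(r)>L_2(2)=0$ on $(2,\infty)$; and $L_s(r)>L_s(2)>0$ for $r>2$ when $s\in\{3,4\}$, while $L_s(2)>0$ holds directly. This is exactly the statement of the lemma. The only genuine computation is the algebraic reduction of $L_s''(r)<0$ to $r^2-sr+s>0$; everything else is bookkeeping. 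I would emphasise that this inequality is exactly where the behaviour at $s\le 4$ and $s\ge 5$ diverges: for $s\ge 5$ the quadratic $r^2-sr+s$ is negative on an interval, so $L_s$ loses concavity and may dip below zero, which is the mechanism producing the threshold $\rho(s)$ analysed in the remainder of this section.
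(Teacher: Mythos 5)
Your proof is correct, and it is precisely the ``elementary calculus'' the paper alludes to without writing out: the paper records the derivatives $L_s'$ and $L_s''$ in equations \eqref{deriv1}--\eqref{deriv2} (note its compact form $L_s''(r) = \tfrac{1}{r^2}\bigl(\tfrac{1}{r-1} - \tfrac{r}{rs-r-s}\bigr)$, whose numerator immediately gives your equivalence $L_s''(r)<0 \iff r^2-sr+s>0$) and then reuses these facts in the proof of Lemma~\ref{lem:unique-threshold}, but states Lemma~\ref{small-values} without detail. Your argument — concavity of $L_s$ on $(2,\infty)$ from the discriminant $s(s-4)\le 0$, $L_s'\to 0$ at infinity hence $L_s'>0$, monotonicity, and direct evaluation of $L_s(2)$ — fills that gap correctly and matches the paper's intended route.
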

	
	The situation is quite different when $s\geq 5$, as Lemma~\ref{lem:unique-threshold} shows.
	The following inequality will be useful: For all $b \ge 1$ and $\abs{a} < b$, 
	\begin{equation} \label{in:trick}
	\left(1+\frac{a}{b} \right)^b \ge e^a \left(1 - \frac{a^2}{b} \right).
	\end{equation}
	(See for example~\cite[p.\ 435]{motwani2010randomized}.)
The proof of Lemma~\ref{lem:unique-threshold} is presented in Section~\ref{s:unique-threshold}.

	\begin{lemma} \label{lem:unique-threshold}
For a fixed integer $s \ge 5$, there exists a unique real number $\rho(s) > 2$ such that $L(\rho(s),s) = 0$,
		\[
		L(r,s) < 0 \quad \text{ for } r \in [2,\rho(s)) \quad \text{ and } \quad L(r,s) > 0 \text{ for } r \in (\rho(s),\infty).
		\]
		Furthermore, if $s \ge 6$, then $\rho(s) > s$.
	\end{lemma}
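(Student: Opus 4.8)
The plan is to treat $L_s=L(\cdot,s)$ as a smooth function of the real variable $r$ on $[2,\infty)$ (for $s\geq 5$ fixed) and prove that its graph has the shape ``first non-increasing, then strictly increasing'': $L_s$ attains its minimum over $[2,\infty)$ at a single point $r^{**}\in[2,r_+)$ and is strictly increasing on $[r^{**},\infty)$, where $r_+=\tfrac12(s+\sqrt{s^2-4s})$. Granting this, the lemma reduces to two sign computations ``at the ends'': $L_s(2)<0$ and $L_s(r)\to+\infty$ as $r\to\infty$. Indeed, the minimum value then satisfies $L_s(r^{**})\leq L_s(2)<0$, so $L_s<0$ on all of $[2,r^{**}]$, while on $[r^{**},\infty)$ the function increases strictly from a negative value to $+\infty$ and hence crosses $0$ exactly once, at a point $\rho(s)>r^{**}\geq 2$; this gives existence and uniqueness of $\rho(s)$ together with the asserted sign pattern (recall $\E Y\to\infty$ when $L_s(r)>0$ and $\E Y=o(1)$ when $L_s(r)\leq 0$, by Lemma~\ref{lem:EY-hypergraph}). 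For the final clause it then suffices to verify $L_s(s)<0$ for $s\geq 6$: since $L_s<0$ precisely on $[2,\rho(s))$, this forces $s<\rho(s)$.

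The heart of the matter is the shape statement, which I would obtain from the second derivative. A direct differentiation (using $g'(x)=\log x+1$ for $g(x)=x\log x$) shows that the rational-in-$r$ part collapses to $1/r$, leaving
\[
L_s'(r)=\frac{s-1}{s}\,\log\frac{r-1}{r}+\frac1s\,\log\frac{(r-1)(s-1)}{(r-1)(s-1)-1}+\frac1r ,
\]
and differentiating once more and clearing the denominator $r^2(r-1)(rs-r-s)$, which is positive for $r\geq 2$ and $s\geq 5$, yields the clean identity
\[
r^2(r-1)(rs-r-s)\,L_s''(r)=-(r^2-sr+s).
\]
Thus $L_s''$ has the sign of $-(r-r_-)(r-r_+)$ with $r_\pm=\tfrac12(s\pm\sqrt{s^2-4s})$, and an elementary check gives $r_-<2<r_+$ for every $s\geq 5$. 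Hence on $[2,\infty)$ the function $L_s'$ is strictly increasing on $[2,r_+]$ and strictly decreasing on $[r_+,\infty)$; since every term in the displayed formula for $L_s'(r)$ tends to $0$ as $r\to\infty$, the decreasing branch forces $L_s'>0$ on $(r_+,\infty)$, and in particular $L_s'(r_+)>0$. Therefore either $L_s'\geq 0$ throughout $[2,\infty)$ (when $L_s'(2)\geq 0$), or $L_s'$ has a single zero $r^*\in(2,r_+)$ with $L_s'<0$ before it and $L_s'>0$ after; in both cases $L_s$ has the required shape, with $r^{**}\in\{2,r^*\}$.

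It remains to supply the endpoint information. For $L_s(r)\to+\infty$ it is enough to use the formula for $L_s'$ together with $\log\frac{r-1}{r}\geq-\frac1{r-1}$ and $\log\frac{(r-1)(s-1)}{(r-1)(s-1)-1}\geq 0$, which give $L_s'(r)\geq\frac1r-\frac{s-1}{s(r-1)}=\frac{r-s}{sr(r-1)}\geq\frac1{2sr}$ for $r\geq 2s$; integrating, $L_s(r)\to+\infty$. The remaining inputs are the strict inequalities $L_s(2)<0$ (all $s\geq 5$) and $L_s(s)<0$ (all $s\geq 6$), which, after clearing positive constants, read
\[
2\log(s-1)<(s-2)\log 2+\tfrac{s-2}{s-1}\log(s-2)
\qquad\text{and}\qquad
(s-1)\log(s-1)<(s-2)\log s+\tfrac{s-2}{s}\log(s-2)
\]
respectively. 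In each case the right-hand side exceeds the left for all $s$ in the stated range, with the gap comfortably wide for large $s$ but small at the smallest admissible $s$ (one finds $L_5(2)\approx-0.026$ and $L_6(6)\approx-0.053$), so the $\log(1+x)$-type corrections must be estimated with care; this is exactly what inequality \eqref{in:trick} is for, or equivalently one may compare the base of the exponential factor in \eqref{eq:EY_G} directly with powers of $e$. The same circle of estimates, carried out at $r=\rho^-(s)$ and $r=\rho^+(s)$ instead of at $r=2$, additionally produces the explicit bounds $\rho^-(s)<\rho(s)<\rho^+(s)$ of Theorem~\ref{thm:threshold}. I expect this estimate-heavy verification at small $s$ to be the only genuine difficulty: once $L_s''$ is computed, its strikingly simple form $-(r^2-sr+s)/(r^2(r-1)(rs-r-s))$ makes the shape of $L_s$, and hence existence, uniqueness and the sign pattern, essentially automatic.
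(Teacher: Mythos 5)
Your route is essentially the paper's. Both proofs hinge on the explicit second derivative $L_s''(r)=\frac{1}{r^2}\bigl(\frac{1}{r-1}-\frac{r}{rs-r-s}\bigr)$, which after clearing the positive denominator $r^2(r-1)(rs-r-s)$ has the sign of $-(r^2-sr+s)$; the roots $r_\pm=\tfrac12\bigl(s\pm\sqrt{s^2-4s}\bigr)$ satisfy $r_-<2<r_+<s$ for $s\geq 5$, so $L_s'$ is strictly increasing on $[2,r_+]$ and strictly decreasing on $[r_+,\infty)$, and the lemma reduces to the endpoint facts $L_s(2)<0$, $L_s(r)\to+\infty$, and (for $\rho(s)>s$) $L_s(s)<0$ when $s\geq 6$. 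Two of your local choices are minor but clean alternatives to the paper's: you deduce $L_s'>0$ on $(r_+,\infty)$ from $L_s'\to 0^+$ as $r\to\infty$ (the paper instead verifies $L_s'(s)>0$ and $L_s'(r)>0$ for $r\geq s$ via inequality \eqref{in:trick}), and you obtain divergence by integrating the explicit lower bound $L_s'(r)\geq(r-s)/\bigl(sr(r-1)\bigr)$ rather than computing $\lim_{r\to\infty}L_s(r)$ directly.

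There is, however, a genuine gap exactly where you flag the ``only genuine difficulty.'' You correctly reduce $L_s(2)<0$ and $L_s(s)<0$ to the two displayed elementary inequalities, but you never prove them for the whole range of $s$: you give numerics at $s=5,6$ (where the margin is tight) and name \eqref{in:trick} as a plausible tool, which is a promissory note, not an argument. Since these endpoint inequalities are precisely what makes the lemma true, they need to be established. The paper does so concretely: for $L_s(2)<0$ it suffices that $(s-1)^s<2^{(s-2)^2}$, verified at $s=6$ and then propagated to all $s\geq 6$ by the induction $k^{k+1}=k\bigl(\tfrac{k}{k-1}\bigr)^k(k-1)^k<2^{k-3}\cdot 2^k\cdot 2^{(k-2)^2}=2^{(k-1)^2}$, with $s=5$ handled by the exact value $L_5(2)=\tfrac{1}{20}\log(16/27)<0$; for $L_s(s)<0$ the paper bounds it above by $\log\bigl(\tfrac{s}{e(s-2)}(s(s-2))^{1/(s-1)}\bigr)$, a decreasing function of $s$ that is already negative at $s=7$, with $s=6$ checked directly. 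To close your proof you would need to supply estimates of this kind rather than appeal to them in principle.
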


The next lemma proves that $\rho(s)$ is exponential in $s$ and lies strictly within an interval of unit width, and gives an asymptotic expression for
$\rho(s)$ with exponentially small error, as $s\to\infty$.  
The proof is given in Section~\ref{s:lem-bounds}.

	\begin{lemma} \label{lem:bounds}
For a fixed integer $s \ge 5$, let $\rho(s)$ be the unique real number such that $L(\rho(s),s) = 0$. Then
		\[
		\frac{e^{s-2}}{s-1}-\frac{s-1}{2} < \rho(s) < \frac{e^{s-2}}{s-1} - \frac{s-3}{2}.
		\]
Furthermore, as $s\to \infty$,
\[ \rho(s) = \frac{e^{s-2}}{s-1}- \frac{s^2-3s+1}{2(s-1)} + O_s(s^4\, e^{-s}).\]
	\end{lemma}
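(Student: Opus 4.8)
The plan is to reduce both assertions to sign estimates for the single function $L=L_s$ from Section~\ref{s:threshold}, after a change of variable that removes the redundancy between $r$ and the quantity $rs-r-s$ that drives the exponential growth. I would set $C=rs-r-s=r(s-1)-s$, a strictly increasing affine function of $r$ with $r=\frac{C+s}{s-1}$ and $r-1=\frac{C+1}{s-1}$. Substituting this into the definition of $L$, every occurrence of $\log(s-1)$ cancels (its coefficient collapses to $\frac{(C+s)-s(C+1)+(s-1)C}{s(s-1)}=0$), and the coefficient of $\log C$ also simplifies, leaving the compact form
\[
(s-1)\,L(r,s)=\widetilde L(C):=\log C+(C+1)\log\!\Big(1+\tfrac1C\Big)-\tfrac{s-1}{s}\,C\log\!\Big(1+\tfrac sC\Big),
\]
which has the same sign as $L(r,s)$. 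I would then write $\widetilde L(C)=(\mathrm I)+(\mathrm{II})+(\mathrm{III})$ with $(\mathrm I)=\log\frac{C}{e^{s-2}}$, $(\mathrm{II})=(C+1)\log(1+\tfrac1C)-1$ and $(\mathrm{III})=\frac{s-1}{s}\big(s-C\log(1+\tfrac sC)\big)$, recording that $(\mathrm I)<0$ when $C<e^{s-2}$, that $0<(\mathrm{II})<\frac1{2C}$, and that $(\mathrm{III})>0$ with $s-C\log(1+\tfrac sC)$ squeezed between $\frac{s^2}{2C}-\frac{s^3}{3C^2}$ and $\frac{s^2}{2C}-\frac{s^3}{3C^2}+\frac{s^4}{4C^3}$ (an alternating power series in $s/C$, valid when $0<s/C<1$); for $(\mathrm I)$ I would use $x+\tfrac{x^2}{2}<-\log(1-x)<\tfrac{x}{1-x}$ together with \eqref{in:trick} where convenient.

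\textbf{The interval.} By Lemma~\ref{lem:unique-threshold}, $L(\cdot,s)$ is negative on $[2,\rho(s))$ and positive on $(\rho(s),\infty)$; since $C$ increases with $r$ and $\rho^-(s)\ge\rho^-(5)>2$, it suffices to show $\widetilde L(C^-)<0$ and $\widetilde L(C^+)>0$, where $C^-=e^{s-2}-\frac{s^2+1}{2}$ and $C^+=e^{s-2}-\frac{s^2-2s+3}{2}$ are the values of $C$ at $r=\rho^-(s)$ and $r=\rho^+(s)$. Writing $C^-=e^{s-2}(1-\varepsilon)$ with $\varepsilon=\frac{s^2+1}{2e^{s-2}}$, the bound on $(\mathrm I)$ gives $|(\mathrm I)|>\varepsilon+\frac{\varepsilon^2}{2}+\frac{\varepsilon^3}{3}+\frac{\varepsilon^4}{4}$, while the bounds on $(\mathrm{II}),(\mathrm{III})$ give $(\mathrm{II})+(\mathrm{III})<\frac{s^2-s+1}{2C^-}-\frac{s^2(s-1)}{3(C^-)^2}+\frac{s^3(s-1)}{4(C^-)^3}$; expanding $\frac1{C^-}$ in $\varepsilon$ and comparing term by term shows $|(\mathrm I)|>(\mathrm{II})+(\mathrm{III})$, hence $\widetilde L(C^-)<0$, once $s$ exceeds a moderate constant, and a similar (less tight) estimate yields $\widetilde L(C^+)>0$. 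The finitely many remaining small $s$ are confirmed by direct numerical evaluation, as reflected in Table~\ref{tab:threshold}. This proves $\tfrac{e^{s-2}}{s-1}-\tfrac{s-1}{2}<\rho(s)<\tfrac{e^{s-2}}{s-1}-\tfrac{s-3}{2}$.

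\textbf{The asymptotic expansion.} Here I would use $\widetilde L(C^\ast)=0$ at $C^\ast=\rho(s)s-\rho(s)-s$. The expansions $(C+1)\log(1+\tfrac1C)=1+\tfrac1{2C}+O(C^{-2})$ and $C\log(1+\tfrac sC)=s-\tfrac{s^2}{2C}+O(s^3C^{-2})$ turn $\widetilde L(C^\ast)=0$ into $\log C^\ast=(s-2)-\frac{s^2-s+1}{2C^\ast}+O\!\big(\frac{s^3}{(C^\ast)^2}\big)$. By the first assertion $C^\ast=e^{s-2}+O(s^2)$, so $s/C^\ast=O(se^{-s})\to0$, the expansions are legitimate with the stated errors, and substituting $C^\ast=e^{s-2}(1+O(s^2e^{-s}))$ and exponentiating gives $C^\ast=e^{s-2}-\frac{s^2-s+1}{2}+O(s^4e^{-s})$. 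Then $\rho(s)=\frac{C^\ast+s}{s-1}=\frac{e^{s-2}}{s-1}-\frac{s^2-3s+1}{2(s-1)}+O_s(s^4e^{-s})$, as claimed.

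The main obstacle is the lower bound $\rho(s)>\tfrac{e^{s-2}}{s-1}-\tfrac{s-1}{2}$: for small $s$ the quantity $\widetilde L(C^-)$ is negative but extremely close to zero (for $s=5$ it is about $-1.6\times10^{-3}$, i.e.\ $\rho^-(s)$ almost meets $\rho(s)$), so the elementary inequalities only close the gap once $s$ is not too small, and the smallest cases genuinely need an explicit numerical check rather than a clean one-line estimate. By contrast, the asymptotic expansion is a routine bootstrap once the crude a priori estimate $C^\ast=e^{s-2}+O(s^2)$ is available.
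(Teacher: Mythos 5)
Your argument is correct and follows the same overall strategy as the paper's proof --- sign check of $L_s$ at $\rho^\pm(s)$ via Taylor expansion, numerical verification for small $s$, and a bootstrap for the asymptotic expansion --- but your change of variable $C=rs-r-s$ is a genuine streamlining that the paper does not use. The substitution $r=(C+s)/(s-1)$ makes every $\log(s-1)$ cancel (the coefficient $\tfrac{(C+s)-s(C+1)+(s-1)C}{s(s-1)}$ is identically zero) and reduces $(s-1)L_s(r)$ to the one-variable function $\widetilde L(C)=\log C+(C+1)\log(1+1/C)-\tfrac{s-1}{s}C\log(1+s/C)$, whose three pieces $(\mathrm{I}),(\mathrm{II}),(\mathrm{III})$ are bona fide alternating series with clean two-sided bounds. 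The paper keeps $r$ as the working variable and organises the verification that $L_s(\rho^\pm(s))$ has the right sign for $s\geq 12$ around the remainder $R_2(x)=\log(1-x)+x+x^2/2$, leading to bulky explicit polynomials in $s$ and $e^{s-2}$; your framing makes the competition $|(\mathrm I)|$ versus $(\mathrm{II})+(\mathrm{III})$ visible at a glance. For the asymptotic expansion, $\widetilde L(C^\ast)=0$ is an algebraic rearrangement of the paper's equation~\eqref{eq1} (because $\log(C^\ast+1)=\log\bigl((s-1)(\rho-1)\bigr)$), and your bootstrap reproduces the claimed error $O_s(s^4 e^{-s})$. The one place you stop short is the phrase ``once $s$ exceeds a moderate constant'': with $\varepsilon=(s^2+1)/(2e^{s-2})$ the leading-order comparison $\varepsilon > (s^2-s+1)/(2C^-)$ reduces to $(s^2+1)^2<2se^{s-2}$, which actually fails for $s\in\{5,6,7\}$, so the higher-order terms of your expansion do genuine work and need to be carried through to make the argument self-contained (the paper does so for $s\geq 12$, leaving $5\le s\le 11$ to direct computation, Table~\ref{verified-directly}); since you flag this yourself, it is an admitted omission rather than a flaw in the approach.
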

\bigskip
	
	We can now prove our main result, Theorem \ref{thm:threshold}, which gives us a threshold result for the existence of a spanning tree in $\cG_{n,r,s}$ when $s\geq 5$.
	
	\begin{proof}[Proof of Theorem \ref{thm:threshold}]
		Lemma \ref{lem:unique-threshold} proves the existence and uniqueness of $\rho(s)$ for $s \ge 5$, while Lemma \ref{lem:bounds} proves the upper and lower bounds on $\rho(s)$, and verifies the given asymptotic expression for $\rho(s)$.
		
		For fixed $s \ge 2$ and $r \le \rho(s)$ (but $(r,s) \ne (2,2)$), the base of the exponential factor in Lemma~\ref{lem:EY-hypergraph} is less than $1$, and thus we have $\Pr(Y \ge 1) \le \E Y \to 0$. By Lemma~\ref{lemma:translate}, we conclude that $\Pr(Y_{\cG} \ge 1) \to 0$.
		
		Now suppose that $s \ge 6$ and $r > \rho(s)$, or $s\in \{2,3,4\}$ and $(r,s)\neq (2,2)$. We verify the conditions of Theorem \ref{thm:subgraph}. 
		Cooper, Frieze, Molloy and Reed~\cite{cooper1996perfect} proved that condition~(1) holds with $\lambda_j$ defined in 
\eqref{eq:lambda}. 
Condition (A2) holds by Lemma~\ref{lemma:joint-moment},
			condition (A3) holds by Lemma \ref{lemma:a3}, 
			and condition (A4) holds by Lemma \ref{lemma:2nd-moment}. 
By our assumptions on $r,s$, we have $\zeta_1 > -1$ for all $j \ge 2$,
by Lemma \ref{lemma:zeta}. Hence $\widehat{Y} > 0$ a.a.s., where $\widehat{Y}$ is the random variable
obtained from $Y$ by conditioning on the event $X_1=0$. 
Then Lemma~\ref{lemma:translate} shows that $Y_{\cG} > 0$ a.a.s., as required. 
	\end{proof}

		Finally, we provide the proof for Theorem \ref{thm:distribution}, which gives the asymptotic distribution of the number of spanning trees in $\cG_{n,r,s}$.

	\begin{proof}[Proof of Theorem \ref{thm:distribution}]
If $\Pr(Y_{\cG}>0)\to 0$ then the result is immediate, as the asymptotic
value of $Y_{\cG}$ is zero with probability 1.  For the remainder of the
proof, suppose that $\Pr(Y_{\cG}>0)\to 1$.

		We showed in the proof of Theorem \ref{thm:threshold} that conditions (A1)--(A4) of Theorem \ref{thm:subgraph} hold for $Y$. This gives us
			\[
			\frac{Y}{\E Y} \,\,\, \overset{d}{\longrightarrow} \,\,\, \prod_{j=1}^\infty (1+\zeta_j)^{Z_j} e^{-\lambda_j \zeta_j}.
			\]	
		For $s=2$, the asymptotic distribution of $Y_\cG$ is obtained from the asymptotic distribution of $Y$ by conditioning on $X_1 = X_2 = 0$.
		
		For $s \ge 3$, let $\widehat{Y}$ be the random variable obtained from $Y$ by conditioning on the event $X_1 = 0$. We apply Corollary \ref{cor:translate-back-to-hypergraphs} to see that
			\[
			\frac{\widehat{Y}}{\E Y} \,\,\, \overset{d}{\longrightarrow}  \,\,\,e^{-\lambda_1 \zeta_1} 
			\, \prod_{j=2}^\infty (1+\zeta_j)^{Z_j} e^{-\lambda_j \zeta_j}.
			\]
			Applying Lemma \ref{lemma:translate}(b) shows that $\widehat{Y}$ and $Y_\cG$ have the same asymptotic distribution. Finally, combining \eqref{eq:EY_G} and Lemma \ref{lem:EY-hypergraph} gives us $\E Y_{\cG} \sim \exp(-\lambda_1 \zeta_1) \E Y$, and the result follows.
	\end{proof}
	

\appendix

\section{Technical proofs}\label{s:deferred}

\subsection{Proof of Lemma~\ref{lem:s2}}\label{sec:lem-s2}
Fix $(P_1,\ldots, P_\ell)\in\mathcal{S}^\ast(\xvec)$ and $\Ivec=(I_1,\ldots, I_\ell)
\in\cI(\xvec)$.  Let $Q = P_1\cup \cdots \cup P_\ell$.
In Step~2 we work in the irregular configuration model determined
by $\Ivec$.  Recall that $u=u(\Ivec)$
is defined by $u=u_1+\ldots +u_\ell$, where $u_i$ is the number of entries of $I_i$
which equal zero. In the irregular configuration model:
\begin{itemize}
\item There are $n-(s-1)\abs{Q}$ regular cells with $r$ points each,
\item There are $(s-2)u$ internal irregular cells with $r-1$ points each,
\item There are $u$ external irregular cells: if an external irregular cell
was collapsed from a path in $P_T\cap Q$ with $k$ edges then it
contains $(rs-r-s)k + r-2$ points, with $k\geq 0$.
\end{itemize}
This gives $n' = n-(s-1)\abs{Q}+(s-1)u$ cells in total.
The number of ways to complete Step~2 equals the number of
ways of choosing a subpartition $P'$ in this irregular configuration model
such that $T'=G(P')$ is a spanning tree.
Then $T'$ corresponds exactly to a spanning tree $T$ in the standard configuration model, with the subgraphs determined by $\Ivec$ contracted to single vertices.

We perform this count by conditioning on the degree of each vertex of $T'$. 
Label the $u$ external irregular cells in increasing order, and label the $(s-2)u$ internal irregular 
cells in increasing order.
Let $d_i$ be the number of points in the $i$'th external irregular cell; that is,
\[ d_i = (rs-r-s)k + r-2\] 
if the $i$'th external irregular cell corresponds to a path of length $k$ in
$P_T\cap Q$. For a degree sequence $\boldsymbol{\delta}$, let $\abs{\boldsymbol{\delta}}$ be its degree sum. 
Let $\cD_{\operatorname{irreg}}$ be the set of possible degree sequences for the irregular cells, and 
for a given $(\boldsymbol{\delta^{\text{ext}}},\boldsymbol{\delta^{\text{int}}}) \in \cD_{\text{irreg}}$,
let $\cD_{\operatorname{reg}}(\boldsymbol{\delta^{\text{ext}}},\boldsymbol{\delta^{\text{int}}})$ be of possible degree sequences for the regular cells: 
	\begin{align*}
	\cD_{\operatorname{irreg}} &= \{ (\boldsymbol{\delta^{\text{ext}}},\boldsymbol{\delta^{\text{int}}}) \in \N^{u} \times \N^{(s-2)u}: 1 \le \delta_i^{\text{ext}} \le d_i, \quad 1 \le \delta_i^{\text{int}} \le r-1\}, \\
	\cD_{\operatorname{reg}}(\boldsymbol{\delta^{\text{ext}}},\boldsymbol{\delta^{\text{int}}}) &= \bigg\{ \boldsymbol{\delta}^{\text{reg}} \in \N^{n - (s-1)\abs{Q}} : \sum_{i = 1}^{n - (s-1)\abs{Q}} \delta^{\text{reg}}_i = \frac{s(n'-1)}{s-1} - \abs{\boldsymbol{\delta^{\text{ext}}}} - \abs{\boldsymbol{\delta^{\text{int}}}}, \quad \delta_i \ge 1  \bigg\}.
	\end{align*}
Let $\mathcal{T}_{n'}(\boldsymbol{\delta^{\text{reg}}}, \boldsymbol{\delta^{\text{ext}}},\boldsymbol{\delta^{\text{int}}})$ be the set of trees on $n'$ vertices which have degree sequence consistent with $(\boldsymbol{\delta^{\text{reg}}}, \boldsymbol{\delta^{\text{ext}}},\boldsymbol{\delta^{\text{int}}})$. Then, the number of ways to complete Step~2 is
	\begin{equation}
\label{eq:s2-intermediate}
	s_2(\xvec,\Ivec) = \sum_{(\boldsymbol{\delta^{\text{ext}}},\boldsymbol{\delta^{\text{int}}}) \in \cD_{\text{irreg}}} \;\,\,
	\sum_{\boldsymbol{\delta^{\text{reg}}} \in \cD_{\operatorname{reg}}(\boldsymbol{\delta^{\text{ext}}},\boldsymbol{\delta^{\text{int}}})} \;\,\,
	\sum_{T' \in \mathcal{T}_{n'}(\boldsymbol{\delta^{\text{reg}}}, \boldsymbol{\delta^{\text{ext}}},\boldsymbol{\delta^{\text{int}}})} \;\,\,
	\sum_{P'_{T'}: G(P'_{T'}) = T'} \, 1.
	\end{equation}
	To simplify this, for a given $T'\in\mathcal{T}_{n'}(\boldsymbol{\delta^{\text{reg}}},\boldsymbol{\delta^{\text{ext}}},
\boldsymbol{\delta^{\text{int}}})$,
the number of subpartitions $P'_{T'}$ that project to $T'$ 
 is
	\[
	\bigg(\prod_{i=1}^{u} (d_i)_{\delta_i^{\text{ext}}} \bigg) \bigg(\prod_{i=1}^{(s-2)u} (r-1)_{\delta_i^{\text{int}}} \bigg)\bigg(\prod_{i=1}^{n-(s-1)\abs{Q}} (r)_{\delta^{\text{reg}}_i} \bigg).
	\]
	For a given $(\boldsymbol{\delta^{\text{reg}}}, \boldsymbol{\delta^{\text{ext}}},\boldsymbol{\delta^{\text{int}}})$, by (\ref{degree-sequence-hypertree}), the number of trees in $\mathcal{T}_{n'}(\boldsymbol{\delta^{\text{reg}}}, \boldsymbol{\delta^{\text{ext}}},\boldsymbol{\delta^{\text{int}}})$ is
	\[
	\frac{s-1}{\bigg(\displaystyle\prod_{i=1}^{u} (\delta_i^{\text{ext}}-1)! \bigg) \bigg(\displaystyle\prod_{i=1}^{(s-2)u} (\delta_i^{\text{int}}-1)! \bigg) \bigg(\displaystyle\prod_{i=1}^{n-(s-1)\abs{Q}} (\delta^{\text{reg}}_i-1)! \bigg)} \times \frac{(n'-2)!}{ \left( (s-1)! \right)^{\frac{n'-1}{s-1}}}.
	\]
Substituting these expressions into (\ref{eq:s2-intermediate}) shows that
	\begin{equation}
\label{s2I}
	s_2(\xvec,\Ivec) = \sum_{(\boldsymbol{\delta^{\text{ext}}},\boldsymbol{\delta^{\text{int}}}) \in \cD_{\text{irreg}}} A_{(\boldsymbol{\delta^{\text{ext}}},\boldsymbol{\delta^{\text{int}}})} \left( \prod_{i=1}^{u} \frac{(d_i)_{\delta_i^{\text{ext}}}}{(\delta_i^{\text{ext}}-1)!} \right) \left( \prod_{i=1}^{(s-2)u} \frac{(r-1)_{\delta_i^{\text{int}}}}{(\delta_i^{\text{int}}-1)!} \right) 
	\end{equation}
	where 
\begin{align*}
	A_{(\boldsymbol{\delta^{\text{ext}}},\boldsymbol{\delta^{\text{int}}})} 
&= \frac{(s-1)(n'-2)!}{\left( (s-1)! \right)^{\frac{n'-1}{s-1}}} \sum_{\delta \in \cD_{\operatorname{reg}}(\boldsymbol{\delta^{\text{ext}}},\boldsymbol{\delta^{\text{int}}})} \prod_{i=1}^{n-(s-1)\abs{Q}} \frac{(r)_{\delta^{\text{reg}}_i}}{(\delta^{\text{reg}}_i - 1)!} \nonumber \\
	&= \frac{(s-1)(n'-2)!}{\left( (s-1)! \right)^{\frac{n'-1}{s-1}}} \left[ z^{s(n'-1)/(s-1) - \abs{\boldsymbol{\delta^{\text{ext}}}} - \abs{\boldsymbol{\delta^{\text{int}}}}} \right] \left( rz(1+z)^{r-1} \right)^{n-(s-1)|Q|} \nonumber \\
	&= \frac{(s-1)(n'-2)!\, r^{n-(s-1)\abs{Q}}}{\left( (s-1)! \right)^{\frac{n'-1}{s-1}}}  \binom{(r-1)(n-(s-1)\abs{Q})}{\frac{s(n'-1)}{s-1} - \abs{\boldsymbol{\delta^{\text{ext}}}} - \abs{\boldsymbol{\delta^{\text{int}}}} - (n-(s-1)\abs{Q})}. 
\end{align*}
(The square bracket in the second line denotes coefficient extraction.) Using Stirling's formula, we have
\begin{align}
A_{(\boldsymbol{\delta^{\text{ext}}},\boldsymbol{\delta^{\text{int}}})}
	&\sim 
A_u\, (s-1)^{(s-1)u}\, (rs-r-s)^{u-\abs{\boldsymbol{\delta^{\text{ext}}}} - \abs{\boldsymbol{\delta^{\text{int}}}}} \nonumber\\
 &= A_u\, \prod_{i=1}^u \frac{s-1}{(rs-r-s)^{\delta^{\text{ext}}_i-1}}\,\, \prod_{i=1}^{(s-2)u}
  \frac{s-1}{(rs-r-s)^{\delta^{\text{int}}_i}}
\label{AddAu}
\end{align}
where
\begin{align}
A_u  &=
 \frac{(r-1)^{1/2} \, (s-1)^2\, ((s-1)!)^{\frac{1}{s-1}}\,}
     {(rs-r-s)^{\frac{3s-1}{2(s-1)}}\,  \, n^{2}} \,
  \left(\frac{(rs-r-s)^{s-1}\, n^{s-1}}{(s-1)^{s-1}\, (s-1)!}\right)^u\nonumber \\
& \quad  \times
  \left(\frac{(rs-r-s)^{rs-r-s} \, (s-1)!}{((r-1)(s-1))^{(r-1)(s-1)} \, r^{s-1}\, n^{s-1}} \right)^{\abs{Q}}
  \, \left(\frac{r(r-1)^{r-1} (s-1)^{r-1}\, n}{e((s-1)!)^{\frac{1}{s-1}}\, (rs-r-s)^{\frac{rs-r-s}{s-1}}} \right)^n. 
\label{Au}
\end{align}

Next, note that
\[
\sum_{i=1}^\ell \sum_{k=0}^{m-1} \qk{I_i} = \sum_{i=1}^\ell u_i = u.
\]
By counting the cells in $Q$, we have
	\[
	(s-1)\abs{Q} = (s-2)u + \sum_{i=1}^\ell \sum_{k=0}^{m-1} ((s-1)k + 1) \,\qk{I_i}.
	\]
	It follows that 
	\[
	\sum_{i=1}^\ell \sum_{k=0}^{m-1} k \, \qk{I_i} = \abs{Q} - u.
	\] 
Therefore, 
\begin{align}
&\phantom{=} \sum_{\boldsymbol{\delta^{\text{ext}}}} \left( \prod_{i=1}^{u} \,
  \frac{(d_i)_{\delta_i^{\text{ext}}} \,(s-1)}{(\delta_i^{\text{ext}}-1)! \, (rs-r-s)^{\delta_i^{\text{ext}} - 1}} \right) \nonumber \\
	&= \prod_{k=0}^{m-1} \prod_{i=1}^\ell \left( \sum_{j = 1}^{(rs-r-s)k + r-2} \frac{\big((rs-r-s)k + r-2\big)_{j} \, (s-1)}{(j-1)! \, (rs-r-s)^{j - 1}}  \right)^{\qk{I_i}} \nonumber \\
	&= \prod_{k=0}^{m-1} \prod_{i=1}^\ell \left( \big((rs-r-s)k + r-2\big) (s-1) \,\sum_{j = 1}^{(rs-r-s)k + r-2}  \frac{\binom{(rs-r-s)k + r-3}{j-1}}{(rs-r-s)^{j - 1}}  \right)^{\qk{I_i}} \nonumber \\
	&= \prod_{k=0}^{m-1} \prod_{i=1}^\ell \left( \left(k + \frac{r-2}{rs-r-s}\right) (rs-r-s)(s-1) \left(1 + \frac{1}{rs-r-s} \right)^{(rs-r-s)k + r-3}  \right)^{\qk{I_i}} \nonumber \\
	&=  \left( \frac{(rs-r-s)^2}{r-1}\right)^u \left(\frac{(r-1)(s-1)}{rs-r-s} \right)^{(rs-r-s)\abs{Q} - (r-1)(s-2)u} \; \prod_{k=0}^{m-1} \left(k + \frac{r-2}{rs-r-s}\right)^{\sum_{i=1}^\ell \qk{I_i}}.
\label{eq:sum1}
	\end{align}
	Similarly, we have
	\begin{align}
	\sum_{\boldsymbol{\delta^{\text{int}}}} \left( \prod_{i=1}^{(s-2)u} \frac{(r-1)_{\delta_i^{\text{int}}}(s-1)}{(\delta_i^{\text{int}}-1)! (rs-r-s)^{\delta_i^{\text{int}}}} \right) 
&= \left(\sum_{j=1}^{r-1} \frac{(r-1)_{j} (s-1)}{(j-1)! (rs-r-s)^{j}} \right)^{(s-2)u} \nonumber \\
	&= \left( \frac{(r-1)(s-1)}{rs-r-s} \sum_{j=1}^{r-1} \binom{r-2}{j-1} \frac{1}{(rs-r-s)^{j -1}} \right)^{(s-2)u} \nonumber \\
	&= \left(\frac{(r-1)(s-1)}{rs-r-s} \right)^{(r-1)(s-2)u}.
\label{eq:sum2}
	\end{align}
The proof is completed by substituting (\ref{Au}), (\ref{eq:sum1})
and (\ref{eq:sum2}) into (\ref{s2I}), using (\ref{AddAu}).

\subsection{Proof of Lemma~\ref{lem:global-max}}\label{s:global-max}

	We assume that $r,s\geq 2$ are fixed integers such that $s\geq 5$ and $r > \rho(s)$,
 or $s\in \{2,3,4\}$ and $(r,s)\neq (2,2)$.
	Recall the definition of $K$ and $\varphi$ from (\ref{eq:K-hypergraphs}), (\ref{phi-def}).
	The partial derivatives of $\varphi: K \rightarrow \mathbb{R}$ are 
	\begin{align*}
	\varphi_{\alpha}(\alpha,\beta) &= \log \left(\frac{(\alpha+\beta)(r-1)(1-(s-1)\alpha-\beta)}
	{\alpha(r-1-\alpha-\beta)}\right),\\
	\varphi_{\beta}(\alpha,\beta) &= 
	\log\left( \frac{(\alpha+\beta)(r-1)}{r-1-\alpha-\beta}\right)
	+ \frac{1}{s-1} \log\left( \frac{(1-(s-1)\alpha-\beta)(rs-r-s-s\beta)}{\beta^2}\right).                  
	\end{align*}
	
	For $x\geq -1$,  let
	\begin{align*}
	\alpha(x) = \frac{1+x}{rs - r +sx + \frac{x(x+1)}{r-1}},\qquad 
	\beta(x)  =  \frac{rs-r-s}{rs -r +sx + \frac{x(x+1)}{r-1}}.
	\end{align*}
	Note that 
	\[
	\text{$(\alpha(x), \beta(x))$ lies in the interior of the domain $K$,  for any $x\in (-1,\infty)$.}
	\]
	Indeed, we have  $\alpha(x)>0$, $\beta(x)>0$, and
	\begin{equation}\label{1-sa-b}
	1 - (s-1)\alpha(x) - \beta(x) =  \frac{1 +x+ \frac{x(x+1)}{r-1}}{rs - r +sx + \frac{x(x+1)}{r-1}}
	\end{equation}
	which is strictly positive for all $x > -1$. 
	
	Our interest in this particular curve $(\alpha(x), \beta(x))$ is clarified by the following  lemma, which shows that it is a parameterisation of a ridge containing any stationary point of $\varphi$ in $K$.
	
	\begin{lemma}\label{lem1}
		Let $K$, $\varphi:K \rightarrow \mathbb{R}$, $\alpha(x)$, $\beta(x)$ be defined as above. Then, the following holds.
		\begin{itemize}
			\item[(a)]  Any local maxima of $\varphi$ on $K$ either equals $(0,0)$ or lies in the interior of $K$.
			\item[(b)] 
			For any stationary point  $(\alpha,\beta)$ in the interior of $K$ of the function $\varphi$ 
			there exists some  $x \in  (-1,\infty)$  such that $\alpha=\alpha(x)$ and $\beta = \beta(x)$ and
			\begin{equation}\label{b-eq}
			(rs-r-s) \left(1 + \dfrac{x}{r-1}\right)^{s-2} = (1+x) \left(rs-r-s + sx + \dfrac{x(x+1)}{r-1}\right).
			\end{equation}
			\item[(c)] Let $f: (-1,\infty)\rightarrow \mathbb{R}$ be defined by $f(x) = \varphi(\alpha(x), \beta(x))$. 
			If  $x \in  (-1,\infty)$  is  a stationary point of $f$ then $x$ solves \eqref{b-eq}.
		\end{itemize}
	\end{lemma}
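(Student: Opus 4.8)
\ The three parts can be proved in the order (a), (b), (c). The organising observation, which drives (b) and (c), is that the curve $x\mapsto(\alpha(x),\beta(x))$ is exactly the zero set of $\varphi_{\alpha}$ inside $K$. To establish this I would write $D(x)=rs-r+sx+\tfrac{x(x+1)}{r-1}$, so $\alpha(x)=\tfrac{1+x}{D(x)}$ and $\beta(x)=\tfrac{rs-r-s}{D(x)}$, and then verify three closed-form identities: from \eqref{1-sa-b} together with $(r-1)(1+x)+x(x+1)=(x+1)(x+r-1)$ one gets $1-(s-1)\alpha(x)-\beta(x)=\tfrac{(x+1)(x+r-1)}{(r-1)D(x)}$; and expanding $(r-1)D(x)$ and using $rs-r-s+1=(r-1)(s-1)$ gives $\alpha(x)+\beta(x)=\tfrac{x+(r-1)(s-1)}{D(x)}$ and the factorisation $r-1-\alpha(x)-\beta(x)=\tfrac{(x+r-1)(x+(r-1)(s-1))}{D(x)}$. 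Plugging these into the formula for $\varphi_{\alpha}$ collapses the argument of the logarithm to $1$, so $\varphi_{\alpha}(\alpha(x),\beta(x))\equiv0$ for all $x>-1$. I would also record that $D(x)>0$ on $(-1,\infty)$, since $D$ is increasing there and $D(-1)=rs-r-s>0$ as $(r,s)\neq(2,2)$.

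For (a) I would analyse $\varphi$ near each face of $\partial K$. On the relative interior of $\{\alpha=0\}$ the argument of the logarithm in $\varphi_{\alpha}$ is asymptotic to a positive constant times $1/\alpha$ as $\alpha\to0^{+}$, so $\varphi_{\alpha}\to+\infty$ there and $\varphi$ strictly increases into $K^{\circ}$; on the relative interior of $\{\beta=0\}$ the term $-\tfrac{2}{s-1}\log\beta$ in $\varphi_{\beta}$ forces $\varphi_{\beta}\to+\infty$; and on the relative interior of $\{(s-1)\alpha+\beta=1\}$ the term $-\tfrac{1}{s-1}g(1-(s-1)\alpha-\beta)$ has inward directional derivative tending to $+\infty$. (For the exceptional pairs $r=2$, $s\in\{3,4\}$, $\varphi$ is real-valued only on $K\cap\{\beta\le(rs-r-s)/s\}$, which carries the additional face $\{rs-r-s-s\beta=0\}$; there the term $-\tfrac{1}{s(s-1)}g(rs-r-s-s\beta)$ plays the same role.) Thus no point in the relative interior of a face is a local maximum. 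At each vertex other than $(0,0)$ I would restrict $\varphi$ to an incident edge and check it is not a local maximum: e.g.\ $\tfrac{d}{d\beta}\varphi(0,\beta)\to-\infty$ as $\beta$ increases to $1$ rules out $(0,1)$, and $\tfrac{d}{d\alpha}\varphi(\alpha,0)\to-\infty$ as $\alpha$ increases to $\tfrac{1}{s-1}$ rules out $(\tfrac{1}{s-1},0)$. Hence no point of $\partial K\setminus\{(0,0)\}$ is a local maximum; since $\varphi$ is continuous on its compact domain it does attain a maximum, so (a) follows.

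For (b), let $(\alpha,\beta)\in K^{\circ}$ be a stationary point and set $x:=\tfrac{(rs-r-s)\alpha}{\beta}-1$, so that $x\in(-1,\infty)$ (using $\alpha,\beta>0$ and $rs-r-s>0$) and $\alpha=\tfrac{(1+x)\beta}{rs-r-s}$. Substituting this into $\varphi_{\alpha}(\alpha,\beta)=0$, i.e.\ $(\alpha+\beta)(r-1)(1-(s-1)\alpha-\beta)=\alpha(r-1-\alpha-\beta)$, and dividing through by $\beta$, leaves an equation linear in $\beta$; solving it and simplifying (again with $rs-r-s+1=(r-1)(s-1)$) gives exactly $\beta=\beta(x)$, hence $\alpha=\alpha(x)$. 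The remaining equation $\varphi_{\beta}(\alpha(x),\beta(x))=0$, after inserting the closed forms above together with $rs-r-s-s\beta(x)=(rs-r-s)\tfrac{D(x)-s}{D(x)}$ and cancelling the common factor $x+r-1$, becomes exactly $(r-1)^{s-2}(1+x)(D(x)-s)=(rs-r-s)(x+r-1)^{s-2}$, which is \eqref{b-eq}. For (c), differentiating along the curve gives $f'(x)=\varphi_{\alpha}(\alpha(x),\beta(x))\,\alpha'(x)+\varphi_{\beta}(\alpha(x),\beta(x))\,\beta'(x)=\varphi_{\beta}(\alpha(x),\beta(x))\,\beta'(x)$ by the identity $\varphi_{\alpha}\equiv0$ on the curve; and since $D'(x)=s+\tfrac{2x+1}{r-1}>0$ on $(-1,\infty)$ (its only zero being $x=-\tfrac{s(r-1)+1}{2}<-1$), we have $\beta'(x)=-(rs-r-s)D'(x)/D(x)^{2}\neq0$, so a stationary point of $f$ forces $\varphi_{\beta}(\alpha(x),\beta(x))=0$, equivalent to \eqref{b-eq} by the computation just made.

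The genuinely routine part is the algebra underlying (b) and (c): once the factorisation $x^{2}+(r-1)sx+(r-1)^{2}(s-1)=(x+r-1)(x+(r-1)(s-1))$ and the vanishing of $\varphi_{\alpha}$ along the curve are in hand, the rest is bookkeeping. I expect the step needing the most care to be (a): one must be precise about which faces of the domain actually occur (the exceptional pairs $r=2$, $s\in\{3,4\}$ carry an extra face, since $\varphi$ is real-valued only where $\beta\le(rs-r-s)/s$), and one must dispatch the three vertices individually rather than through a single gradient argument.
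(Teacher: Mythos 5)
Your proposal is correct and follows essentially the same route as the paper: part~(a) by sending one-sided derivatives to $\pm\infty$ on each boundary face of $K$, part~(b) by setting $x = (rs-r-s)\alpha/\beta - 1$ and reducing $\varphi_\alpha=0$ and $\varphi_\beta=0$ to $\alpha=\alpha(x)$, $\beta=\beta(x)$ and \eqref{b-eq}, and part~(c) by observing that $\varphi_\alpha$ vanishes identically along the curve while $\beta'(x)\neq 0$. One small inaccuracy: you list the pairs needing the extra face $\{\beta=(rs-r-s)/s\}$ as $r=2$, $s\in\{3,4\}$, but $(r,s)=(3,2)$ has $(rs-r-s)/s=\tfrac12<1$ too and so should be included (the paper's own argument omits this boundary component for all such pairs, so your treatment is a genuine refinement, just incomplete).
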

	\begin{proof}
		To prove (a), we need to show that  none of the following points is a local maximum:
		\begin{itemize}
			\item[(i)]  $(\alpha,0)$ for all $0 <\alpha\leq \frac{1}{s-1}$;
			\item[(ii)] $(0,\beta)$ for all $0<\beta \leq 1$;
			\item[(iii)] $(\alpha,\beta)$ for all  positive $\alpha, \beta$ that $\beta =1 - (s-1)\alpha$.
		\end{itemize}
		
		For (i), observe that if $\alpha \rightarrow \frac{1}{s-1}$ then $\varphi_\alpha(\alpha,0) = O(1) + \log(1-(s-1)\alpha) \rightarrow -\infty$. 
		Hence $ \varphi(\alpha,0) > \varphi(\frac{1}{s-1},0)$ for sufficiently large 
		$\alpha < \frac{1}{s-1}$.
		Next, if $0<\alpha<\frac{1}{s-1}$ and $\beta\rightarrow 0$ then $\varphi_\beta(\alpha,\beta) = O(1) - \frac{2}{s-1}\log(\beta) \rightarrow +\infty$. Therefore, $\varphi(\alpha,\beta)>\varphi(\alpha,0)$ for sufficiently small positive $\beta$. 
		
		For (ii),   observe that if  $\beta \rightarrow 1$ then $\varphi_{\beta}(0,\beta) = O(1) + \frac{1}{s-1}\log(1-\beta)\rightarrow -\infty$.  Hence
		$\varphi(\beta,0)>\varphi(1,0)$ for sufficiently large $\beta<1$. Next, if $0<\beta<1$ and 
		$\alpha \rightarrow 0$ then $\varphi_{\alpha}(\alpha,\beta)  = O(1) - \log(\alpha) \rightarrow -\infty$. 
		Therefore, $\varphi(\alpha,\beta)>\varphi(\alpha,0)$ for sufficiently small positive $\alpha$. 
		
		Finally, for (iii), observe that for fixed $\alpha$ with  $0 < \alpha < \frac{1}{s-1}$ and  $\beta \rightarrow 1-(s-1)\alpha$
		we have 
		$\varphi_{\alpha}(\alpha,\beta) = O(1) + \log(1-(s-1)\alpha-\beta) \rightarrow -\infty$. 
		So $\varphi(\alpha,1-(s-1)\alpha-\varepsilon) > \varphi(\alpha,1-(s-1)\alpha)$ 
		for sufficiently small positive $\varepsilon$. This completes the proof of (a).
		
		Next we proceed to (b). Let $(\alpha,\beta)$ be a stationary point of $\varphi$ in the interior of $K$. 
		We put
		\begin{equation}\label{def_t}
		x =  \frac{\alpha(rs-r-s)}{\beta} -1.
		\end{equation}
		Clearly $x \in (-1,\infty)$ since both $\alpha$ and $\beta$ are positive. From $\varphi_\alpha(\alpha,\beta)=0$, we find that
		\[
		1-(s-1)\alpha - \beta = \dfrac{\alpha}{\alpha + \beta} - \dfrac{\alpha}{r-1}.
		\]
		Substituting $\beta = \frac{\alpha(rs-r-s)}{x+1}$, we find that
		\[
		1 - \alpha \left(s-1 + \dfrac{rs-r-s}{x+1}\right) =  \dfrac{x+1}{ x+1 + rs-r-s} - \dfrac{\alpha}{r-1}.
		\]
		After rearranging, we see that this identity is equivalent to  $\alpha = \alpha(x)$. Then, from \eqref{def_t} we find that $\beta = \beta(x)$.
		Hence
			\[ \alpha + \beta = \frac{(rs-r-s+x+1)\, \alpha}{x+1}\]
			and
		\[
		\frac{r-1-\alpha-\beta} {(r-1)(\alpha+\beta)}  = 1 + \frac{x}{r-1}.
		\]
		The definition of $\beta(x)$ implies that
		\[
		\frac{rs-r-s-s\beta}{\beta}
		= rs-r-s+sx + \frac{x(x+1)}{r-1}
		\]
		while \eqref{1-sa-b} implies that
		\[
		\frac{1-(s-1)\alpha-\beta}{\beta} =  
		\frac{(1+x)\left( 1+ \frac{x}{r-1}\right) }
		{ rs-r-s}.
		\]
		Substituting the above expressions into $\varphi_\beta(\alpha,\beta)=0$ leads to equation \eqref{b-eq} and, thus, completes the proof of (b).  
		
		Next, observe that after much rearranging, $\varphi_\alpha(\alpha(x),\beta(x))=0$ for any $x\in(-1,\infty)$.  Therefore
		\[
		f'(x) = \varphi_\beta(\alpha(x),\beta(x))\, \beta'(x).
		\] 
But $\beta(x)$ is strictly decreasing, so $f'(x) =0$ if and only if $\varphi_\beta(\alpha(x),\beta(x))=0$. 
			Therefore~(c) follows from~(b). 
	\end{proof}
	
	Next, we show  that equation \eqref{b-eq} has  at most two solutions.  
	\begin{lemma}\label{lem2} 
Let $r,s\geq 2$ be fixed integers.
   Equation \eqref{b-eq} has a solution at $x=0$. Moreover, the following holds.
		\begin{itemize}
			\item[\emph{(a)}]
If $s \in \{2,3,4\}$ and  $(r,s)\neq (2,2)$ then $x=0$ is the unique solution
			of equation \eqref{b-eq} on $(-1,\infty)$.  
			\item[\emph{(b)}]
			If $s\geq 5$ and $r\geq s-1$ then equation
			\eqref{b-eq} has no solutions on $(-1,0)$ and at most one solution on $(0,\infty)$.
		\end{itemize}
	\end{lemma}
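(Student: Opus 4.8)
The plan is to verify $x=0$ by substitution, clear denominators so that \eqref{b-eq} becomes a polynomial equation $D(x)=0$, and then count the roots of $D$ in $(-1,\infty)$ — by a factorisation for $s\le 4$ and by Descartes' rule of signs for $s\ge 5$. Write $c=rs-r-s$, so $c\ge 1$ as $(r,s)\neq(2,2)$; substituting $x=0$ into \eqref{b-eq} gives $c=c$, so $x=0$ is always a solution. Multiplying \eqref{b-eq} by $(r-1)^{s-1}$ and using $(r-1)\bigl(rs-r-s+sx+\frac{x(x+1)}{r-1}\bigr)=x^2+((r-1)s+1)x+(r-1)c$, the equation is equivalent, for $x>-1$, to $D(x)=0$ where
\[
D(x)=(r-1)^{s-2}(1+x)\bigl(x^2+((r-1)s+1)x+(r-1)c\bigr)-c(r-1)(x+r-1)^{s-2},
\]
a polynomial of degree $\max(3,s-2)$ with $D(0)=0$. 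Under $y=1+x$ the solutions of \eqref{b-eq} in $(-1,\infty)$ are precisely the positive roots of $\widehat D(y):=D(y-1)$, and using $(y-1)^2+((r-1)s+1)(y-1)+(r-1)c=y^2+((r-1)s-1)y+(r-1)(rs-r-2s)$,
\[
\widehat D(y)=(r-1)^{s-2}\,y\bigl(y^2+((r-1)s-1)y+(r-1)(rs-r-2s)\bigr)-c(r-1)(y+r-2)^{s-2},
\]
which still satisfies $\widehat D(1)=0$.

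For $s\in\{2,3,4\}$ the polynomial $D$ is a cubic, so $D(x)=x\,g(x)$ for an explicit quadratic $g$. A routine computation — treating $r=2$ separately, where $D$ splits into linear factors — shows $g$ has positive leading coefficient, satisfies $g(-1)\ge 0$ (with equality only when $r=2$), and has its vertex strictly to the left of $-1$; hence $g$ has no root in $(-1,\infty)$, so $x=0$ is the unique solution of \eqref{b-eq} on $(-1,\infty)$. (Equivalently, the coefficient sequence of $\widehat D$ has a single sign change.)

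Now let $s\ge 5$ and $r\ge s-1$. First I would rule out solutions in $(-1,0)$: dividing \eqref{b-eq} by $1+x>0$ rewrites it as $c\,\theta(x)=Q(x)$ with $\theta(x)=(1+\frac{x}{r-1})^{s-2}/(1+x)$ and $Q(x)=c+sx+\frac{x(x+1)}{r-1}$, and on $(-1,0)$ one has $Q'(x)=s+\frac{2x+1}{r-1}>s-1>0$ while
\[
\frac{\theta'(x)}{\theta(x)}=\frac{(s-3)x+(s-1-r)}{(x+r-1)(1+x)}<0
\]
since $(s-3)x<0$ (as $s\ge5$) and $s-1-r\le 0$ (as $r\ge s-1$). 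Thus $Q$ is strictly increasing and $\theta$ strictly decreasing on $(-1,0)$, so from $\theta(0)=1$, $Q(0)=c$ we get $c\theta(x)>c>Q(x)$ there, and \eqref{b-eq} has no solution in $(-1,0)$. It remains to show $\widehat D$ has at most two positive roots counted with multiplicity — since $y=1$ is one of them, this forces at most one solution in $(0,\infty)$. For this I would apply Descartes' rule to $\widehat D$: expanding $(y+r-2)^{s-2}$ by the binomial theorem and using $r\ge 4$, the constant term $a_0$, every coefficient $a_i$ with $i\ge 4$, and the leading coefficient of $\widehat D$ are all negative, so every sign change of the coefficient sequence occurs among $a_0,a_1,a_2,a_3$ (and $a_4$ when $s\ge6$); both ends of this block being negative, the number of sign changes is even, hence at most four. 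Four would require $a_1>0,\ a_2<0,\ a_3>0$, so it is enough to rule out $a_2<0,\ a_3>0$: writing $a_2=(r-1)\bigl((r-1)^{s-3}((r-1)s-1)-c\binom{s-2}{2}(r-2)^{s-4}\bigr)$ and $a_3=(r-1)\bigl((r-1)^{s-3}-c\binom{s-2}{3}(r-2)^{s-5}\bigr)$, the hypothesis $a_2\le 0$ makes $c$ large enough that
\[
a_3\le (r-1)^{s-2}\Bigl(1-\frac{((r-1)s-1)(s-4)}{3(r-2)}\Bigr)<0,
\]
since $((r-1)s-1)(s-4)\ge (r-1)s-1\ge 5r-6>3(r-2)$ for $s\ge5$, $r\ge2$. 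Hence $\widehat D$ has at most two sign changes, at most two positive roots, and the proof is complete.

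The bookkeeping in the reduction and the monotonicity argument on $(-1,0)$ are straightforward; the real content is the Descartes step. The signs of $a_1,a_2,a_3$ genuinely depend on $(r,s)$ — in particular $a_2$ can be of either sign — so one cannot simply read off the sign pattern; what rescues the argument is that the forbidden pattern never occurs, and, once the reduction is arranged so that the binomial tail of $\widehat D$ has a fixed sign, this reduces to the single elementary inequality $((r-1)s-1)(s-4)>3(r-2)$ for $s\ge 5$. That is the step I expect to be the crux.
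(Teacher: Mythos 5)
Your approach is genuinely different from the paper's on $(0,\infty)$. The paper works with the transcendental functions $L$ and $R$ (the two sides of \eqref{b-eq}) throughout: after establishing monotonicity of $(1+x)^{-1}L$ and $(1+x)^{-1}R$ on $(-1,0)$ (your $(-1,0)$ argument is essentially identical to theirs), it compares $L'(0)$ with $R'(0)$ and then runs a convexity argument involving $L''$, $R''$, $L^{(3)}$, $R^{(3)}$, locating a single inflection threshold $x^*$ and showing $L-R$ is strictly convex to its right. You instead clear denominators, pass to the shifted polynomial $\widehat D(y)$, and count positive roots via Descartes' rule of signs. For $s\le4$ you also replace the paper's division-and-monotonicity trick by the explicit quadratic factor of the cubic $D$. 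The Descartes route is clean and avoids the paper's somewhat delicate third-derivative case split (they treat $s=5,r=4$ separately, etc.), and the key inequality $((r-1)s-1)(s-4)>3(r-2)$ you isolate is indeed the crux.

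However, there is one incorrect assertion that needs repair. You claim that for $s\ge5$, $r\ge4$, the leading coefficient of $\widehat D$ is negative. For $s\ge6$ this is true (it is $-c(r-1)$), but for $s=5$ the polynomial $\widehat D$ has degree $3$ and its leading coefficient is
\[
a_3=(r-1)\bigl((r-1)^2-c\bigr)=(r-1)\bigl(r^2-6r+6\bigr),
\]
which is \emph{positive} for every integer $r\ge5$ (e.g.\ for $(r,s)=(5,5)$, $\widehat D(y)=4y^3+676y^2+940y-1620$). So for $s=5$, $r\ge5$ the sign sequence $(a_0,a_1,a_2,a_3)$ has negative and positive ends, the parity-of-sign-changes argument (``both ends negative, hence even, at most four'') breaks down, and the claim ``at most two sign changes'' does not follow as written. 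Fortunately the inequality you already prove — $a_2\le 0$ implies $a_3<0$, equivalently $a_3\ge0$ implies $a_2>0$ — closes this gap on its own: when $a_3>0$ you get $a_2>0$, so the sequence is $(-,\pm,+,+)$ with exactly one sign change and hence exactly one positive root, namely $y=1$. You should state this extra branch explicitly for $s=5$ rather than asserting a uniformly negative leading coefficient. With that patch, and with the $s\le4$ quadratic computations spelled out (they do check: e.g.\ for $s=4$, $g(-1)=(r-2)^2(3r-4)$ and the vertex is at $-\frac{4r^2-9r+6}{2(r-1)}<-1$), the argument is correct.
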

	\begin{proof}
		Let functions $L, R: (-1,\infty)\rightarrow \mathbb{R}$ stand for the LHS and the RHS of \eqref{b-eq}:
		\[
		L(x) = (rs-r-s) \left(1 + \dfrac{x}{r-1}\right)^{s-2}, \qquad  
		R(x)= (1+x) \left(rs-r-s + sx + \dfrac{x(x+1)}{r-1}\right).
		\]
		Observe that $L(0) = R(0) = rs-r-s$, so $x=0$ is a solution of  \eqref{b-eq}.

		First, assume that $s\in \{2,3\}$.  Then the function $(1+x)^{-1}L(x)$ decreases on $(-1,\infty)$. Note also that 
		$(1+x)^{-1} R(x)$ is a strictly increasing function on $(-1,\infty)$. Thus, there are no other solutions 
		of \eqref{b-eq} except $x=0$. 	 	
		Similarly, for $s=4$, the function $(1+x)^{-1} \left(1 + \frac{x}{r-1}  \right)^{-1}L(x)$ decreases on $(-1,\infty)$. On the other hand,
  	\[
	\frac{R(x)}{(1+x) \left(1 + \frac{x}{r-1}  \right)} = 
		rs-r-s+2+ x - \frac{2}{1 + \frac{x}{r-1}}
		\]
		is strictly increasing on $(-1,\infty)$. Part (a) follows.
		
		We proceed to the case $s\geq 5$ and $r\geq s-1$. As  above, $(1+x)^{-1} R(x)$ is a strictly increasing function on $(-1,\infty)$.	 The function $(1+x)^{-1}L(x)$ decreases on $(-1,0)$ because
		\[
		\left((s-2) \log  \left(1 + \dfrac{x}{r-1}\right) - \log(1+x)\right)' =
		\frac{s-2}{r-1+x} - \frac{1}{1+x} < \frac{s-r-1}{(r-1+x)(1+x)}\leq 0. 
		\]
		This proves that \eqref{b-eq} has no solutions on $(-1,0)$.	 
		
		Next, we compute 
		\begin{align*}
		L'(0) &= \frac{(rs-r-s)(s-2)}{r-1}<rs-r + \frac{1}{r-1}= R'(0);\\
		L''(x) &= \frac{(rs-r-s) (s-2)(s-3)}{(r-1)^2} \left(1 + \dfrac{x}{r-1}\right)^{s-4};\\
		R''(x) &= 2s + \frac{4 + 6x}{r-1}.
		\end{align*}
		First suppose that $L''(x) < R''(x)$ for all $x\in [0,\infty)$. Then $L'(x)-R'(x)<L'(0)-R'(0)<0$. This implies that the function $L(x)-R(x) < L(0)-R(0)=0$ so \eqref{b-eq} has no solutions on $(0,\infty)$. 
		For future reference, note that this case holds when $s=5$ and $r\geq 5$, as can be verified directly.
		
		Otherwise, let $x^* = \inf \{x \in [0,\infty) \, : \,   L''(x) \geq R''(x)\}$. As before,   $x=0$ is the unique solution $L(x)=R(x)$ on $[0,x^*]$ because the function $L'(x)-R'(x)$ is strictly decreasing on this interval. In particular, we get 
		\[	
		L(x^*) - R(x^*) \leq x^* (L'(0)-R'(0)) \leq 0.
		\] 
		By continuity, we have 
		$x^* \in [0,\infty)$ and  $L''(x^*)\geq R''(x^*)$.	 
Observe that
		\[
		L^{(3)}(x^*) = L''(x^*) \frac{s-4}{r-1 + x^*} \geq R''(x^*) \frac{3}{s(r-1) + (2+3x^*)}  = R^{(3)}(x^*).
		\] 
		Note that $ R^{(3)}(x) $ is a constant.  For $s\geq 6$ the function $L^{(3)}(x)$ is strictly increasing.
When $s=5$ we need only consider $r=4$, since $r\geq 5$ is covered by the earlier argument.
			Here $L^{(3}(x)$ is also a constant and we can check directly that $L^{(3)}(x) > R^{(3)}(x)$.
		In all cases, we conclude that   $L^{(3)}(x) -  R^{(3)}(x) > 0$ for any $x\geq x^*$. Therefore,  $L''(x) -  R''(x) > 0$ for any $x>x^*$ so the function $L(x)-R(x)$ is strictly convex on $(x^*,\infty)$.  Since $L(x^*)-R(x^*)\leq 0$ we conclude that $L(x)- R(x) =0$ for at most at one point  $x\in (x^*,\infty)$. This completes the proof of (b).
	\end{proof}

\medskip
	
We show that $(\alpha_0,\beta_0)$ is a local maximiser of $\varphi$ and that the Hessisan at this 
point is strictly negative definite.
\begin{lemma}
\label{lem:negative-definite}
	Fix integers $r,s\geq 2$ such that \emph{(\ref{r-lower})} holds.
Then $(\alpha_0,\beta_0)$ is a local maximiser of $\varphi$ on $K$ and the Hessian evaluated
at the point $(\alpha_0,\beta_0)$ is strictly negative definite.
\end{lemma}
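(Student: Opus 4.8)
The plan is to verify that $(\alpha_0,\beta_0)$ is an interior stationary point of $\varphi$, compute the $2\times 2$ Hessian $H_0$ there, and then invoke the standard criterion that a symmetric $2\times 2$ matrix is strictly negative definite if and only if its $(1,1)$ entry is negative and its determinant is positive.

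First I would record that $\varphi$ is smooth in a neighbourhood of $(\alpha_0,\beta_0)$ and that $(\alpha_0,\beta_0)\in K^\circ$. Indeed, each of $\alpha_0+\beta_0=\frac{r-1}{r}$, $r-1-\alpha_0-\beta_0=\frac{(r-1)^2}{r}$, $\beta_0=\frac{rs-r-s}{r(s-1)}$, $\alpha_0=\frac{1}{r(s-1)}$, $1-(s-1)\alpha_0-\beta_0=\alpha_0$, and $rs-r-s-s\beta_0=\frac{(rs-r-s)^2}{r(s-1)}$ is strictly positive whenever $(r,s)\neq(2,2)$, and these are precisely the arguments of the logarithms occurring in the terms $g(\cdot)$ defining $\varphi$; the same positivity gives $(s-1)\alpha_0+\beta_0=\frac{rs-r-1}{r(s-1)}<1$, so $(\alpha_0,\beta_0)$ is interior. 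For stationarity, note that $(\alpha(0),\beta(0))=(\alpha_0,\beta_0)$ in the parameterisation of Lemma~\ref{lem1}, and that $x=0$ solves \eqref{b-eq} by Lemma~\ref{lem2}; since (as shown in the proof of Lemma~\ref{lem1}) $\varphi_\alpha(\alpha(x),\beta(x))=0$ for all $x\in(-1,\infty)$ and $\varphi_\beta(\alpha(x),\beta(x))=0$ exactly when $x$ solves \eqref{b-eq}, both partial derivatives vanish at $(\alpha_0,\beta_0)$.

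Next I would differentiate the formulas for $\varphi_\alpha$ and $\varphi_\beta$ displayed before Lemma~\ref{lem1}. This gives
\[
\varphi_{\alpha\alpha}=\frac{1}{\alpha+\beta}-\frac{1}{\alpha}-\frac{s-1}{1-(s-1)\alpha-\beta}+\frac{1}{r-1-\alpha-\beta},
\]
together with the analogous (longer) expressions for $\varphi_{\alpha\beta}$ and $\varphi_{\beta\beta}$. Substituting $(\alpha_0,\beta_0)$ and using the identities above, one finds $\varphi_{\alpha\alpha}(\alpha_0,\beta_0)=\frac{r^2}{(r-1)^2}-rs(s-1)$. This is strictly negative: dividing by $r>0$, it suffices that $\frac{r}{(r-1)^2}<s(s-1)$, which holds for $r=2$ since then $(r,s)\neq(2,2)$ forces $s\geq 3$ and $2<6\leq s(s-1)$, and holds for $r\geq 3$ since $\frac{r}{(r-1)^2}\leq\frac34<2\leq s(s-1)$.

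Finally, a direct simplification of $\varphi_{\alpha\alpha}\varphi_{\beta\beta}-\varphi_{\alpha\beta}^2$ at $(\alpha_0,\beta_0)$ yields
\[
\det H_0=\frac{r^3(s-1)^2(r^2-rs+r+s-1)}{(r-1)^2(rs-r-s)},
\]
which is the value recorded in the statement of Lemma~\ref{lem:global-max}. Its denominator is positive because $rs-r-s=(r-1)(s-1)-1\geq 1$ when $(r,s)\neq(2,2)$, and its numerator is positive by inequality \eqref{eq:det-inequality}, which was verified in the proof of Lemma~\ref{lemma:a3} for all $(r,s)$ satisfying \eqref{r-lower}. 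Hence $H_0$ has negative $(1,1)$ entry and positive determinant, so it is strictly negative definite; and since $(\alpha_0,\beta_0)$ is an interior stationary point, this makes it a strict local maximiser of $\varphi$ on $K$. The one genuinely laborious step is the algebraic reduction of $\det H_0$ to the stated closed form; everything else is short and mechanical.
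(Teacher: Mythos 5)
Your argument is correct and follows the same overall structure as the paper's proof: verify that $(\alpha_0,\beta_0)$ is an interior stationary point, show the determinant of the Hessian is positive via \eqref{eq:det-inequality}, and combine with one more scalar sign condition to conclude strict negative definiteness. The one place you genuinely deviate is in which scalar condition you pair with the positive determinant. The paper computes the trace,
\[
\tr(H_0)=-\left(\frac{r^2}{(r-1)(rs-r-s)^2}+\frac{r(2r-1)}{(r-1)(rs-r-s)}+\frac{(r^2-4r+1)r}{(r-1)^2}+rs(s-1)\right),
\]
and must argue its negativity; since the third term is negative for $r\in\{2,3\}$, the paper handles $r\geq 4$ uniformly and then treats $(r,s)\in\{(2,3),(3,2),(3,3)\}$ by direct substitution. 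You instead invoke Sylvester's criterion through the leading principal minor $\varphi_{\alpha\alpha}(\alpha_0,\beta_0)=\frac{r^2}{(r-1)^2}-rs(s-1)$, which is a shorter expression and whose negativity you verify uniformly across all $(r,s)$ satisfying \eqref{r-lower} with no case analysis. That is a small but real simplification. Your derivation of $\varphi_{\alpha\alpha}(\alpha_0,\beta_0)$ checks out (using $\alpha_0+\beta_0=\frac{r-1}{r}$, $1-(s-1)\alpha_0-\beta_0=\alpha_0$, $r-1-\alpha_0-\beta_0=\frac{(r-1)^2}{r}$), as does the sign argument. One minor remark: your justification of stationarity appeals to the equivalence ``$\varphi_\beta(\alpha(x),\beta(x))=0$ exactly when $x$ solves \eqref{b-eq}'', which is implicit in the proofs of Lemma~\ref{lem1}(b),(c) rather than an explicitly stated conclusion; the paper sidesteps this by saying stationarity follows from ``direct substitution'', which is more self-contained. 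Either route is fine, but you should be aware you are leaning on the internals of Lemma~\ref{lem1}'s proof rather than its statement.
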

\begin{proof}
Direct substitution shows that $(\alpha_0,\beta_0)$ is a stationary point of $\varphi$. 
We will show that the Hessian $H_0$ at the point $(\alpha_0,\beta_0)$ has a 
positive determinant and a negative trace, and is therefore strictly negative definite. This will also imply that $(\alpha_0,\beta_0)$ is 
a local maximiser.
Now
			\begin{align}
			\det(H_0) &= \frac{r^3 (s-1)^2 \left(r^2-r s+r+s-1\right)}{(r-1)^2 (r s-r-s)}, 
\label{H0-det}\\
			\tr(H_0) &= -\left(\frac{r^2}{(r-1) (r s-r-s)^2}+\frac{r(2 r-1)}{(r-1) (r s-r-s)}+\frac{\left(r^2-4 r+1\right) r}{(r-1)^2}+r s(s-1)\right).\nonumber
			\end{align}
Recalling (\ref{eq:det-inequality}) we see that $\det(H_0)>0$.
To show $\tr(H_0)<0$, observe that every term inside the parentheses is positive when 
$r\geq 4$.  The only other cases are $(r,s)\in \{ (2,3),\, (3,2),\, (3,3)\}$,
and direct substitution shows that $\tr(H_0)$ is also negative in these cases.
	\end{proof}

We can now prove Lemma~\ref{lem:global-max}.

	\begin{proof}[Proof of Lemma~\ref{lem:global-max}]
The assumptions of the lemma imply that (\ref{r-lower}) holds.
	First, observe that $\alpha_0 = \alpha(0)$ and $\beta_0=\beta(0)$. 
Next, observe that the condition $\varphi(0,0) < \varphi(\alpha_0,\beta_0)$ holds if and only if 
  $L(r,s)>0$, and $L(r,s)>0$ by the assumptions on $(r,s)$, using Lemma~\ref{small-values} and 
Lemma~\ref{lem:unique-threshold}.
		
	Let  $(\alpha,\beta)$ be any global maximum of $\varphi$ on $K$. 
	By Lemma~\ref{lem1}(a),(b) and the assumption that $\varphi(0,0)<\varphi(\alpha_0,\beta_0)$, we conclude 
	that $(\alpha,\beta)$ lies in the interior of $K$ and  
	$\alpha = \alpha(x)$, $\beta=\beta(x)$ for some $x\in (-1,\infty)$. Since  $(\alpha,\beta)$  is a global 
		maximum of $\varphi$, it follows that $x$ is a global maximum of $f(x)= \varphi(\alpha(x),\beta(x))$. 
		Similarly, by assumption, $0$ is a local maximum of $f(x)$. If $x \neq 0$ then
		the function $f$ would have another stationary point between $0$ and $x$, but this is impossible by Lemma \ref{lem2}. Thus $x=0$, which shows that $(\alpha_0,\beta_0)$ is the
unique maximum of $\varphi$ on $K$.

Now $\det(-H_0) = (-1)^2\det(H_0)$ is given in (\ref{H0-det}), matching the value given in the
statement of Lemma~\ref{lem:global-max}, while direct substitution shows that that the value of 
$\varphi(\alpha_0,\beta_0)$ stated in Lemma~\ref{lem:global-max} is correct. This completes the proof. 
	\end{proof}
	
\subsection{Proof of Lemma~\ref{lem:unique-threshold}}\label{s:unique-threshold}

Recall the function $L$ defined at the start of Section~\ref{s:threshold}.
		For reference, the first and second derivatives of $L_s(r)$ with respect to $r$ are
		\begin{align}
		L_s'(r) &= \frac{1}{r} + \log(r-1) - \frac{s-1}{s} \log r - \frac{1}{s} \log\left(r - \frac{s}{s-1}\right), \label{deriv1}\\
		L_s''(r) &= \frac{1}{r^2} \left( \frac{1}{r-1} - \frac{r}{rs-r-s} \right). \label{deriv2}
		\end{align}
		When $s=5$ we have $L_5(2) < 0$ and $L_5'(r) > 0$ for $r\geq 2$. 
Hence Lemma~\ref{lem:unique-threshold} holds when $s=5$.
		
		Now suppose that $s\geq 6$.  It follows from (\ref{deriv2}) that
		$L_s''(r)\geq 0$ when 
\[ r\leq r_0 = \nfrac{1}{2}\big(s+\sqrt{s(s-4)}\big),\] and $L_s''(r) < 0$
		otherwise.     The point of inflection $r_0$ satisfies $2 < s/2 < r_0 < s$.
		Next, using (\ref{in:trick}) with $a=-1$, $b=s$, we have
		\[
		e \left(1 - \frac{1}{s} \right)^s > \frac{s-1}{s} > \frac{s-2}{s-1}.
		\]
		This implies that $L_s'(s) > 0$ when $s\geq 6$.    It follows that the maximum of $L_s(r)$
		on $[2,s]$ is either $L_s(2)$ or $L_s(s)$.
		
		Next, observe that if $(s-1)^s < 2^{(s-2)^2}$ then $L_s(2) < 0$.
		This sufficient condition holds when $s=6$, and if $(k-1)^k < 2^{(k-2)^2}$ for some $k\geq 6$
		then
		\[
		k^{k+1} = k \left(\frac{k}{k-1} \right)^k (k-1)^k < 2^{k-3} \, 2^k \, 2^{(k-2)^2} = 2^{(k-1)^2}.
		\]
		Hence, by induction, $L_s(2) < 0$ for all $s\geq 6$.  Furthermore, we claim that $L_s(s) < 0$ for all
		$s\geq 6$.  Now
		\[
		L_s(s) = \log\left( \frac{(s-1)^{s}}{s^{\frac{s(s-2)}{s-1}} (s-2)^{\frac{s-2}{s-1}}} \right)
		\]
		and direct substitution shows that $L_6(6) < 0$.  When $s\geq 7$, note that
		\[
		\frac{(s-1)^{s}}{s^{\frac{s(s-2)}{s-1}} (s-2)^{\frac{s-2}{s-1}}} < \dfrac{s}{e(s-2)}\,  
		\big( s(s-2) \big)^{\frac{1}{s-1}},
		\]
		and observe that the right hand side is a decreasing function of $s$.
		Therefore for $s\geq 7$,
		\[
		L_s(s) < \log\left( \frac{7}{5e}\, 35^{1/6}\right) < 0.
		\]
		This establishes that $L_s(r) < 0$ for all $r\in [2,s]$.
		
		Applying (\ref{in:trick}) with $a = -1$ we see that when $r\geq s$,
		\[
		e^{s}\left(1-\frac{1}{r} \right)^{rs} \ge \left(1 - \frac{1}{r} \right)^s \ge \left(1 - \frac{1}{r} \right)^r > \left(1 - \frac{s}{r(s-1)} \right)^r.
		\]
		This inequality is equivalent to $L_s'(r) > 0$, so $L_s(r)$ is strictly monotonically increasing on
		$r\geq s$.  Finally,
		\[ \frac{rs-r-s}{s}\log\left(\frac{r-1}{r}\right) \to -\frac{s-1}{s},\qquad
		\frac{rs-r-s}{s(s-1)}\log\left(\frac{r-1}{r-s/(s-1)}\right) \to \frac{1}{s-1},\]
		so
		\[ \lim_{r\to\infty} L_s(r)= 
		\frac{1}{s-1}\log(s-1) - \frac{s-2}{s-1} +  \lim_{r\to\infty} \frac{1}{s-1}\log(r-1)  = \infty.\]

\subsection{Proof of Lemma~\ref{lem:bounds}}\label{s:lem-bounds}

The proof of Lemma \ref{lem:unique-threshold} showed that $L_s(r)$ is monotonically increasing for 
		$r \ge s\geq 5$.  Hence, for the first statement it suffices to show that 
		\begin{equation} \label{eq:sufficient}
		L_s(\rho^{-}(s)) <0 \qquad \text{and} \qquad L_s(\rho^{+}(s)) >0.
		\end{equation}
		These inequalities hold for $s\in \{5,6,\ldots, 11\}$, as may be verified by
direct computation (see Table~\ref{verified-directly}).
		\begin{table}[ht!]
				\renewcommand{\arraystretch}{1.5}
				\small
				\centering
				\begin{tabular}{|c||c|c|c|c|c|c|c|}
					\hline
					$s$ & $5$ & $6$ & $7$ & $8$ & $9$ & $10$ & $11$ \\ \hline
					$L_s\big(\rho^{-}(s)\big)$ & $-0.0051$ & $-0.0027$ & $-0.0012$ & $-0.00047$ & $-0.00018$ & $-0.000066$ & $-0.000025$ \\ \hline
					$L_s\big(\rho^{+}(s)\big)$ & $0.012$ &$0.0039$ & $0.0013$ & $0.00045$ & $0.00016$ & $0.000057$ & $0.000021$  \\
					\hline
				\end{tabular}
				\caption{The values of $L_s(\rho^{-}(s))$ and $L_s(\rho^{+}(s))$
for $s=5,\dots,11$, to $2$ significant figures}
\label{verified-directly}
			\end{table}
		For the remainder of the proof of the first statement, assume that $s\geq 12$.
		
	To prove the upper bound, let $f(x) = \log(1-x)$ and $R_2(x) = f(x) + x + \frac{x^2}{2}$.
	We write
		\begin{align*}
		& L_s\big(\rho^{-}(s) \big) \\
		&= \frac{s-2}{s-1} 
		+ \left(\frac{e^{s-2}}{s-1} - \frac{s}{2} - \frac{1}{2} \right) \log\left(1 - \frac{(s+1)(s-1)}{2e^{s-2}} \right) \\
		&\qquad {} -  \left( \frac{e^{s-2}}{s} - \frac{s}{2} - \frac{1}{2s} \right) \log\left(1 - \frac{(s-1)^2}{2e^{s-2}} \right)  - \left(\frac{e^{s-2}}{s(s-1)}- \frac{s^2+1}{2s(s-1)} \right) \log\left(1 - \frac{s^2+1}{2e^{s-2}} \right) \\
		&= \frac{s-2}{s-1} \\ & {} 
		+ \left(\frac{e^{s-2}}{s-1} - \frac{s}{2} - \frac{1}{2} \right) \left(- \frac{(s+1)(s-1)}{2e^{s-2}} - \frac{1}{2} \left(\frac{(s+1)(s-1)}{2e^{s-2}}\right)^2 + R_2\left(\frac{(s+1)(s-1)}{2e^{s-2}} \right) \right) \\
		& \quad -  \left( \frac{e^{s-2}}{s} - \frac{s}{2} - \frac{1}{2s} \right) \left( - \frac{(s-1)^2}{2e^{s-2}} - \frac{1}{2} \left(\frac{(s-1)^2}{2e^{s-2}} \right)^2 + R_2\left(\frac{(s-1)^2}{2e^{s-2}} \right) \right) \\ 
		& \quad - \left(\frac{e^{s-2}}{s(s-1)}- \frac{s^2+1}{2s(s-1)} \right) \left(- \frac{s^2+1}{2e^{s-2}} - \frac{1}{2} \left( \frac{s^2+1}{2e^{s-2}} \right)^2 + R_2\left(\frac{s^2+1}{2e^{s-2}} \right)\right)
		\end{align*}
		All inputs to $R_2(\cdot)$ in the above expression lie in $[0,\frac{145}{2e^{10}}]$ as $s\geq 12$, and
		$-c_1 \le f'''(x) \le -2$ where $c_1 = \frac{16 e^{30}}{\left(2 e^{10}-145\right)^3} \approx 2.01988$. 
		Hence, by Taylor's Theorem, we conclude that
		$-c_1 x^3/6 \le R_2(x) \le - x^3/3$.
		This implies that
		\begin{align*}
		&  96(s-1)e^{3(s-2)}\, L_s\big(\rho^{-}(s)\big)  \\
		&\qquad \le -  \Big(2e^{s-2} \big(24 s e^{s-2} - c_1(s^2-1)^3\big)
		+ (s^2-1)^3 \big((c_1-2)s^2 + c_1+2\big) \\
		& \hspace*{6cm} {} +4e^{s-2} \big(s^6-6 s^5+18 s^4-18 s^3+21 s^2-12 s+8\big) 
		\Big),
		\end{align*}
		which is negative 
		when $s\geq 12$.  This establishes the first inequality in (\ref{eq:sufficient}).
		
		The second inequality follows similarly, writing
		\begin{align*}
		& L_s\big( \rho^{+}(s) \big) \\
		&= \frac{s-2}{s-1} + \left(\frac{e^{s-2}}{s-1}-\frac{s-1}{2} \right) \left(- \frac{(s-1)^2}{2e^{s-2}} - \frac{1}{2} \left( \frac{(s-1)^2}{2e^{s-2}}\right)^2+ R_2\left(\frac{(s-1)^2}{2e^{s-2}} \right) \right) \\
		&{} - \left( \frac{e^{s-2}}{s} - \frac{s}{2} + 1 - \frac{3}{2s} \right) \left(- \frac{(s-1)(s-3)}{2e^{s-2}} - \frac{1}{2} \left(\frac{(s-1)(s-3)}{2e^{s-2}} \right)^2 + R_2\left(\frac{(s-1)(s-3)}{2e^{s-2}} \right) \right) \\
		&{} - \left(\frac{e^{s-2}}{s(s-1)}- \frac{s^2-2s+3}{2s(s-1)} \right) \left(- \frac{s^2-2s+3}{2e^{s-2}} - \frac{1}{2} \left( \frac{s^2-2s+3}{2e^{s-2}}\right)^2 + R_2\left(\frac{s^2-2s+3}{2e^{s-2}} \right) \right) 
		\end{align*}
		and using the bound $-c_2 x^3/6 \le R_2(x) \le - x^3/3$
		for $x \in [0,\frac{123}{2e^{10}}]$, where
		$c_2 = \frac{16 e^{30}}{\left(2 e^{10}-123\right)^3} \approx 2.01685$. 
		This leads to
		\begin{align*}
		& 96(s-1)e^{3(s-2)}\,  L_s\big( \rho^{+}(s) \big) \\
		&\quad \ge 2e^{s-2} \big(24 e^{s-2} (s-2) - c_2(s-1)^6 \big) 
		+4e^{s-2} \big(s^6-6 s^5+18 s^4-34 s^3+29 s^2-12\big) 
		\\ &  \qquad {} 
		+ c_2 (s-1)^8 -2 \big(s^8-14 s^7+90 s^6-334 s^5+796 s^4-1258 s^3+1302 s^2-810 s+243\big)
		\end{align*}
		which is positive 
		when $s \ge 12$. 
This concludes the proof of the first statement of the lemma.  

For the
second statement, observe from (\ref{rho-def}) that the definition of $\rho(s)$ 
can be rewritten as
\begin{equation}\label{eq1}
	\frac{\log(s-1)+\log(\rho-1)}{\rho s - \rho - s} =  
	  \frac{s-1}{s} \log\left(1+ \frac{s}{\rho s - \rho - s}\right) - 
	  \log\left(1+ \frac{1}{\rho s - \rho - s}\right).
\end{equation}
It follows from the first statement of Lemma~\ref{lem:bounds} that
\[ \rho s - \rho - s = e^{s-2}\big(1 + O(s^3\, e^{-s})\big).\]
Using Taylor's theorem, we find that, as $s\rightarrow \infty$,
\begin{align*}
 (\rho s - \rho - s) \log\left(1+ \frac{s}{\rho s - \rho - s}\right) &= 
 s - \frac{s^2}{2(\rho s -\rho - s)} + O(s^3 e^{-2s})
 \\
 &=   s  -  \frac{s^2 }{ 2e^{s-2} } + O(s^5 e^{-2s}).
 \end{align*}
 Similarly, we have
 \[
    (\rho s - \rho - s)  \log\left(1+ \frac{1}{\rho s - \rho - s}\right) =
   1 -   \frac{1} {2e^{s-2}} + O(s^3e^{-2s}).
\]
Substituting these bounds into \eqref{eq1}, we find that
\begin{align*}
(s-1)(\rho-1) &= \exp\left(s-2   - \frac{s(s-1) }{ 2  e^{s-2} } + \frac{1} {2e^{s-2}} + O(s^5 e^{-2s})  \right)
\\ &= e^{s-2} -   \frac{(s^2-s-1) }{ 2}  + O(s^5 e^{-s}).
\end{align*}
The proof is completed by solving for $\rho$.

\end{document}